


\documentclass[a4paper,11pt,final]{article}

\usepackage{graphicx}
\usepackage{amssymb,amsmath,amsthm,mathrsfs,xfrac}
\usepackage{enumitem}  
\usepackage{a4wide}
\usepackage{hyperref}
\usepackage{array}
\usepackage{xcolor}
\usepackage{subcaption}
\usepackage[utf8]{inputenc}

\setlength{\delimitershortfall}{-0.1pt}
\numberwithin{equation}{section}
\allowdisplaybreaks[4]

\newtheorem{proposition}{Proposition}[section]
\newtheorem{theorem}[proposition]{Theorem}
\newtheorem{lemma}[proposition]{Lemma}
\newtheorem{definition}[proposition]{Definition}
\newtheorem{corollary}[proposition]{Corollary}
\newtheorem{remark}[proposition]{Remark}

\renewenvironment{proof}{\smallskip\noindent\emph{\textbf{Proof.}}%
  \hspace{1pt}}{\hspace{-5pt}{\nobreak\quad\nobreak\hfill\nobreak%
    $\square$\vspace{2pt}\par}\smallskip\goodbreak}

\newenvironment{proofof}[1]{\smallskip\noindent{\textbf{Proof~of~#1.}}%
  \hspace{1pt}}{\hspace{-5pt}{\nobreak\quad\nobreak\hfill\nobreak%
    $\square$\vspace{2pt}\par}\smallskip\goodbreak}

\newcommand{\Id}{\mathinner{\mathrm{Id}}}
\newcommand{\pint}[1]{\mathaccent23{#1}}

\newcommand{\C}[1]{\mathbf{C}^{#1}}
\newcommand{\Cc}[1]{\mathbf{C}_c^{#1}}

\newcommand{\BV}{\mathbf{BV}}

\renewcommand{\L}[1]{{\mathbf{L}^#1}}
\newcommand{\Lloc}[1]{{\mathbf{L}_{\mathbf{loc}}^{#1}}}

\newcommand{\W}[2]{{\mathbf{W}^{#1,#2}}}

\newcommand{\modulo}[1]{{\left|#1\right|}}
\newcommand{\norma}[1]{{\left\|#1\right\|}}
\newcommand{\caratt}[1]{{\chi_{\strut#1}}}
\newcommand{\reali}{{\mathbb{R}}}
\newcommand{\naturali}{{\mathbb{N}}}

\renewcommand{\epsilon}{\varepsilon}
\renewcommand{\phi}{\varphi}
\renewcommand{\theta}{\vartheta}

\newcommand{\tv}{\mathinner{\rm TV}}
\newcommand{\spt}{\mathop{\rm spt}}
\newcommand{\sgn}{\mathop{\rm sgn}}

\renewcommand{\d}[1]{\mathinner{\mathrm{d}{#1}}}
\renewcommand{\div}{\mathinner{\nabla\cdot}} 


\makeatletter
\let\@fnsymbol\@arabic
\makeatother

\hyphenation{Grund-lehren}

\title{On a Hyperbolic-Parabolic Parasitoid-Parasite System:\\
  Well Posedness and Control}

\author{Rinaldo M.~Colombo\footnotemark[1]  \and Elena Rossi\footnotemark[2]}

\date{ }
\begin{document}
\maketitle
\footnotetext[1]{INdAM Unit, University of Brescia, via Branze, 38,
  25123 Brescia, Italy. \\ Email: \texttt{rinaldo.colombo@unibs.it} Orcid:~0000-0003-0459-585X }
\footnotetext[2]{Universit\`a degli Studi di Milano Bicocca,
  Dipartimento di Matematica e Applicazioni, via R.~Cozzi, 55, 20126
  Milano, Italy. E-mail: \texttt{elena.rossi@unimib.it} Orcid:~0000-0002-3565-4309}

\begin{abstract}

  \noindent We develop a time and space dependent predator -- prey
  model. The predators' equation is a non local hyperbolic balance
  law, while the diffusion of prey obeys a parabolic equation, so that
  predators \emph{``hunt''} for prey, while prey diffuse. A control
  term allows to describe the use of predators as parasitoids to limit
  the growth of prey--parasites.  The general well posedness and
  stability results here obtained ensure the existence of optimal pest
  control strategies, as discussed through some numerical
  integrations.

  The specific example we have in mind is that of \emph{Trichopria
    drosophil\ae} used to fight against the spreading of
  \emph{Drosophila suzukii}.

  \medskip

  \noindent\textit{2000~Mathematics Subject Classification:} 35L65,
  49J20, 35M30.

  \medskip

  \noindent\textit{Keywords:} NonLocal Conservation Laws, Optimal
  Control of Conservation Laws, Predator--Prey Systems.

\end{abstract}

\section{Introduction}
\label{sec:Intro}

We consider the following mixed system on $\reali^n$
\begin{equation}
  \label{eq:1}
  \left\{
    \begin{array}{l}
      \partial_t u
      + \div \left(u \,v(t,w) \right)
      =
      f (t,x,w) \, u + q(t,x)
      \\
      \partial_t w
      - \mu \, \Delta w
      =
      g (t,x,u,w) \, w,
    \end{array}
  \right.
\end{equation}
where $u=u (t,x)$ and $w=w(t,x)$ represent respectively the predator
and the prey density at time $t \in \reali_+$ and position
$x\in\reali^n$. We remark that in the vector field $v$ the dependence
on the prey density $w$ is of a \emph{functional} nature thus
allowing, for instance, to describe predators that hunt for the prey
they perceive within a given distance. The parameter $\mu$, related to
the prey diffusion speed, is assumed to be strictly positive.

Once the fundamental well posedness and stability properties
for~\eqref{eq:1} are obtained, we consider the problem to steer the
solution to~\eqref{eq:1} to optimize a goal, typically represented by
the minimization of a functional defined on the solutions
to~\eqref{eq:1}. In the driving example we have in mind, the term $q$
in~\eqref{eq:1} represents the space and time dependent deployment of
parasitoids (predators) in the environment, aiming at limiting a given
parasites (prey). In other words, \eqref{eq:1} provides a possible
structure for the search for an optimal strategy in biological pest
control.  Preliminary general numerical results are provided
in~\cite{ElenaMBE}.

A specific situation that fits the present framework is the current
attempt to limit the spreading of \emph{Drosophila suzukii} (a pest
damaging fruits' cultivation) by means of \emph{ad hoc} deployments of
\emph{Trichopria drosophil\ae} (a parasitoid laying its eggs in the
larv\ae~of the \emph{Drosophila suzukii}), see~\cite{Crowder2007,
  Pfab2018489, RossiStacconi20199}. An obvious question risen by the
adoption of these biological strategies is the search for the optimal
time and space choices for the release of parasitoids in the
environment. The present paper offers a framework to test and compare
different strategies, see Section~\ref{sec:SNI}.

\medskip

From the analytic point of view, besides the introduction of the
control, the mixed system~\eqref{eq:1} comprehends the one studied
in~\cite{parahyp} also by taking into account general source terms
that may depend on the unknown variables, as well as on both $t$ and
$x$. Moreover, the flow $u\, v(t,w)$ in the first equation
in~\eqref{eq:1} accounts for the velocity chosen by predators in
response to the prey density distribution $w$. A key feature of the
mixed system~\eqref{eq:1} is the non locality and nonlinearity of the
function $v$ with respect to the prey density. For instance, the
choice
\begin{equation}
  \label{eq:2}
  \left(v (t,w)\right) (x)
  =
  \kappa (t,x) \,
  \frac{\nabla (w * \eta) (x)}{\sqrt{1+ \norma{\nabla (w * \eta) (x)}^2}},
\end{equation}
means that predators are directed towards regions where the
concentration of prey is greater.  Above, the positive function
$\kappa$ is the maximal speed of predators and may depend on time and
space. For any fixed positive smooth mollifier $\eta$, the
space-convolution product $\left(w (t) * \eta\right)(x)$ is an average
of the prey density at time $t$ around position $x$. The denominator
in~\eqref{eq:2} acts as a smooth normalisation factor.

\medskip

The next section is devoted to the well posedness and stability of the
Cauchy Problem for~\eqref{eq:1}. Then, we also deal with the optimal
control of the solutions to~\eqref{eq:1} by means of the control $q$
and aiming at the minimization of a given integral functional. A
specific application of these theoretical results is in
Section~\ref{sec:SNI}. All analytic proofs are deferred to
Section~\ref{sec:AP}.

\section{Main Results}
\label{sec:MR}

Below, we fix $T > t_o \geq 0$, possibly allowing the case
$T = +\infty$, and correspondingly we set
\begin{equation}
  \label{eq:J}
  I = [t_o,T] \mbox{ or } I = \left[t_o, +\infty\right[ \quad \mbox{ and } \quad
  J = \left\{(t_1,t_2) \in I^2 : t_1<t_2\right\}.
\end{equation}
The space dimension $n$ is fixed throughout, as well as the parameter
$\mu >0$. For the heat kernel we use the notation
$H_\mu (t,x) = (4 \, \pi \, \mu \, t)^{-n/2} \; \exp
\left(-\norma{x}^2 \middle/(4 \, \mu \, t)\right)$, where $t \in I$,
$x \in \reali^n$. As it is well known,
$\norma{H_\mu (t)}_{\L1 (\reali^n; \reali)} = 1$.

We recall below the definition of solution to~\eqref{eq:1}, slightly
extending that in~\cite{parahyp}, and adapting it to the present
setting of time and space dependent coefficients.

\begin{definition}
  \label{def:sol}
  A pair $(u,w) \in \C0 (I; \L1 (\reali^n; \reali^2))$ is a solution
  to problem~\eqref{eq:1} on $I$ if
  \begin{itemize}
  \item setting $a (t,x) = g\left(t,x,u (t,x),w (t,x)\right)$, $w$ is
    a weak solution to $\partial_t w - \mu \Delta w = a \,w$;

  \item setting $b (t,x) = f\left(t,x,w (t,x)\right)$ and
    $c (t,x) = \left(v (t,w (t))\right)\!(x)$, $u$ is a weak solution
    to $\partial_t u + \div (u \, c)= b \, u + q$.
  \end{itemize}
\end{definition}

\noindent The extension of Definition~\ref{def:sol} to
Cauchy problems is immediate. For completeness,
Definition~\ref{def:par} provides the definition of solution to the
parabolic equation $\partial_t w - \mu \Delta w = a \,w$, while
Definition~\ref{def:hyp} recalls the definition of solution to the
balance law $\partial_t u + \div (u \, c)= b \, u + q$.

Introduce the spaces
\begin{equation}
  \label{eq:X}
  \begin{aligned}
    \mathcal{U} & = (\L1 \cap \L\infty \cap \BV) (\reali^n; \reali)
    &\qquad \qquad & &\mathcal{U}^+ & = (\L1 \cap \L\infty \cap \BV)
    (\reali^n; \reali_+)
    \\
    \mathcal{X} & = \mathcal{U} \times \mathcal{U} & & &\mathcal{X}^+
    & = \mathcal{U}^+ \times \mathcal{U}^+
  \end{aligned}
\end{equation}
and the norm
\begin{equation}
  \label{eq:normX}
  \norma{(u,w)}_{\mathcal{X}}
  =
  \norma{u}_{\L1 (\reali^n; \reali)}
  +
  \norma{w}_{\L1 (\reali^n; \reali)}.
\end{equation}

We are now ready to state the key well posedness and stability result
of this paper.
\begin{theorem}
  \label{thm:main}
  Consider problem~\eqref{eq:1} under the following assumptions:
  \begin{enumerate}[label=$\boldsymbol{(v)}$]
  \item \label{v}
    $v \colon I \times (\L1\cap\L\infty) (\reali^n; \reali) \to (\C2
    \cap \W1\infty) (\reali^n; \reali^n)$ admits two maps
    $K_v \in \Lloc\infty (I; \reali_+)$ and
    $C_v \in \Lloc\infty (I\times\reali_+; \reali_+)$ weakly
    increasing in each argument and such that, for all $t \in I$ and
    $w,w_1, w_2 \in (\L1\cap\L\infty) (\reali^n; \reali)$,
    \begin{align*}
      \norma{v (t,w)}_{\L\infty (\reali^n; \reali^n)}
      \leq \
      & K_v (t) \, \norma{w}_{\L1 (\reali^n; \reali)},
      \\
      \norma{\nabla v (t,w)}_{\L\infty (\reali^n;\reali^{n\times n})}
      \leq \
      & K_v (t) \, \norma{w}_{\L\infty(\reali^n; \reali)},
      \\
      \norma{v (t,w_1) - v (t,w_2)}_{\L\infty (\reali^n; \reali^n)}
      \leq \
      & K_v (t) \, \norma{w_1 - w_2}_{\L1(\reali^n; \reali)},
      \\
      \norma{\nabla \left(\div v (t,w)\right)}_{\L1 (\reali^n; \reali^n)}
      \leq \
      & C_v \! \left(t, \norma{w}_{\L1(\reali^n; \reali)}\right)
        \, \norma{w}_{\L1(\reali^n; \reali)},
      \\
      \norma{\div\left(v (t,w_1) - v (t,w_2)\right)}_{\L1 (\reali^n; \reali)}
      \leq \
      & C_v \! \left(t, \norma{w_2}_{\L\infty(\reali^n; \reali)}\right)
        \, \norma{w_1 - w_2}_{\L1(\reali^n; \reali)}.
    \end{align*}
  \end{enumerate}

  \begin{enumerate}[label=$\boldsymbol{(f)}$]
  \item \label{f}
    $f \colon I \times \reali^n \times \reali \to \reali^n$ admits a
    weakly increasing map $K_f \in \Lloc\infty (I; \reali_+)$ such
    that, for a.e.~$t \in I$, all $w_1,w_2 \in \reali_+$ and all
    $w \in \BV (\reali^n; \reali)$,
    \begin{align*}
      \sup_{x \in \reali^n} \modulo{f (t,x,w_1) - f (t,x,w_2)}
      \leq \
      & K_f (t) \; \modulo{w_1 - w_2}\,,
      \\
      \sup_{x \in \reali^n } f (t,x,w_1)
      \leq \
      & K_f (t)  \, (1+w_1) \,,
      \\
      \tv f\left(t, \cdot, w (\cdot)\right)
      \leq \
      & K_f (t)
        \left(
        1
        +
        \norma{w}_{\L\infty (\reali^n; \reali)}
        +
        \tv (w)
        \right) \,.
    \end{align*}

  \end{enumerate}

  \begin{enumerate}[label=$\boldsymbol{(g)}$]
  \item \label{g}
    $g \colon I \times \reali^n \times \reali \times \reali \to
    \reali$ admits a weakly increasing map
    $K_g \in \Lloc\infty (I; \reali_+)$ such that, for a.e.~$t \in I$
    and all $u_1,u_2,w_1,w_2 \in \reali_+$,
    \begin{align*}
      \sup_{x \in \reali^n} \modulo{g (t,x,u_1,w_1) - g (t,x,u_2,w_2)}
      \leq \
      & K_g (t) \; \left(\modulo{u_1-u_2} + \modulo{w_1-w_2}\right) ,
      \\
      \sup_{(x,u,w) \in \reali^n \times \reali_+ \times \reali_+}
      g (t,x,u,w)
      \leq \
      & K_g (t).
    \end{align*}
  \end{enumerate}

  \begin{enumerate}[label={$\boldsymbol{(q)}$}]
  \item \label{q*}
    $q \in \L\infty(I \times \reali^n; \reali_+) \cap \L\infty (I; \L1
    (\reali^n; \reali_+))$ and $q (t) \in \BV (\reali^n; \reali_+)$,
    for $t \in I$.
  \end{enumerate}

  \noindent Then, for any initial datum $(u_o,w_o) \in \mathcal{X}^+$,
  problem~\eqref{eq:1} admits a unique solution
  \begin{displaymath}
    (u, w) \in \C0 (I, \L1 (\reali^n; \reali_+^2))
  \end{displaymath}
  in the sense of Definition~\ref{def:sol} and, moreover,
  \begin{enumerate}[label=\bf{(\arabic*)}]
  \item \textbf{\emph{A priori} estimates:} for all $t \in I$, we have
    \begin{align*}
      \norma{w (t)}_{\L1 (\reali^n)}
      \leq \
      & \norma{w_o}_{\L1 (\reali^n)} \; e^{K_g (t) \, (t-t_o)},
      \\
      \norma{w (t)}_{\L\infty (\reali^n)}
      \leq \
      & \norma{w_o}_{\L\infty (\reali^n)} \; e^{K_g (t) \, (t-t_o)},
      \\
      \norma{u (t)}_{\L1 (\reali^n)}
      \leq \
      & \left(\norma{u_o}_{\L1 (\reali^n)}
        +\norma{q}_{\L1 ([t_o,t]\times\reali^n)}\right)
      \\
      & \qquad
        \times
        \exp \left[
        K_f (t) \, (t-t_o)
        \left(
        1
        +
        \norma{w_o}_{\L\infty (\reali^n)} e^{K_g (t) \, (t-t_o)}
        \right)
        \right],
      \\
      \norma{u (t)}_{\L\infty (\reali^n)}
      \leq \
      & \left( \norma{u_o}_{\L\infty (\reali^n)}
        + \norma{q}_{\L1 ([t_o,t]; \L\infty(\reali^n))}\right)
      \\
      & \qquad
        \times
        \exp \left[
        \left(K_f (t) + K_v (t)\right) (t-t_o)
        \left(
        1
        +
        \norma{w_o}_{\L\infty (\reali^n)} e^{K_g (t) \, (t-t_o)}
        \right)
        \right] .
    \end{align*}

  \item \textbf{Lipschitz continuous dependence on the initial data:}
    for $(u_o,w_o), \, (\tilde u_o, \tilde w_o) \in \mathcal{X}^+$,
    \begin{equation}
      \label{eq:37}
      \norma{(u (t) , w(t)) - (\tilde u (t), \tilde w (t))}_{\mathcal{X}}
      \leq \mathcal{C}_o (t,r) \, \norma{(u_o, w_o) - (\tilde u_o, \tilde
        w_o)}_{\mathcal{X}}
    \end{equation}
    where the locally bounded function $\mathcal{C}_o$ is defined
    in~\eqref{eq:36} and $r$ is an upper bound for the $\L1$ norm, the
    $\L\infty$ norm and the total variation of the initial data,
    see~\eqref{eq:38}.

  \item \textbf{Stability with respect to the control $q$:} for all
    $q, \tilde q$ satisfying~\textbf{(q*)}, for all $t \in I$,
    \begin{equation}
      \label{eq:27}
      \norma{(u (t) , w(t)) - (\tilde u (t), \tilde w (t))}_{\mathcal{X}}
      \leq
      \mathcal{C}_q (t,r) \,
      \norma{q - \tilde q}_{\L1 ([t_o,t] \times \reali^n)} ,
    \end{equation}
    where the locally bounded function $\mathcal{C}_q$ is defined
    in~\eqref{eq:33} and $r$ is an upper bound for the $\L1$ norm, the
    $\L\infty$ norm and the total variation of the initial data,
    see~\eqref{eq:38}.
  \end{enumerate}
\end{theorem}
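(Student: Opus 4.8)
\quad
The plan is to reduce~\eqref{eq:1} to a fixed point problem that decouples the hyperbolic and the parabolic equation, and to run this construction together with the \emph{a priori} estimates, which are needed both to iterate the local-in-time solution up to $T$ and to prove items~\textbf{(1)}--\textbf{(3)}. The two building blocks are the well posedness results behind Definitions~\ref{def:par} and~\ref{def:hyp}. For a given $a \in \L\infty (I\times\reali^n;\reali)$, the \emph{linear} parabolic Cauchy problem $\partial_t w - \mu\,\Delta w = a\,w$, $w (t_o) = w_o$, is solved by $w (t) = H_\mu (t-t_o)*w_o + \int_{t_o}^t H_\mu (t-s)*\bigl(a (s)\,w (s)\bigr)\,\d s$; for given $c$ with $c (t) \in (\C2\cap\W1\infty)(\reali^n;\reali^n)$, $b \in \L\infty$ and $q$ as in~\textbf{(q*)}, the balance law $\partial_t u + \div (u\,c) = b\,u + q$, $u (t_o) = u_o$, is well posed. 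For both, nonnegativity of $u_o$, $w_o$, $q$ propagates --- by the parabolic maximum principle, resp.\ along the characteristics $\dot X = c (t,X)$ --- and there hold the standard $\L1$, $\L\infty$ and $\BV$ bounds, obtained through (possibly singular, via $\norma{\nabla H_\mu (t)}_{\L1} = \O\,t^{-1/2}$) Gronwall inequalities, together with $\L1$ Lipschitz dependence on the datum, on the source and on the coefficients.

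\smallskip\noindent
Fix $(u_o,w_o) \in \mathcal X^+$ and let $r$ bound its $\L1$, $\L\infty$ and total variation. On a short interval $[t_o,t_o+\tau]$ I would work on the complete metric space $\mathcal D_\tau$ of the $(u,w) \in \C0 ([t_o,t_o+\tau];\L1 (\reali^n;\reali_+^2))$ whose $\L1$, $\L\infty$ and total variation are bounded by a suitable $R$ (of size comparable to $r+1$), equipped with the $\C0 ([t_o,t_o+\tau];\L1)$ distance, and on the map $\mathcal T (u,w) = (U,W)$, where $W$ solves the parabolic problem with coefficient $g\bigl(t,x,u (t,x),w (t,x)\bigr)$ and datum $w_o$, and $U$ solves the balance law with coefficients $v (t,w (t))$ and $f\bigl(t,x,w (t,x)\bigr)$, source $q$ and datum $u_o$ --- both problems being linear in the new unknown, hence covered by the building blocks thanks to~\textbf{(v)},~\textbf{(f)},~\textbf{(g)}. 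A fixed point of $\mathcal T$ is exactly a solution in the sense of Definition~\ref{def:sol}. That $\mathcal T (\mathcal D_\tau) \subseteq \mathcal D_\tau$ for $\tau$ small follows from the \emph{a priori} estimates below, which are uniform in the input (for instance $\norma{W (t)}_{\L\infty} \le \norma{w_o}_{\L\infty}\,e^{K_g (t)(t-t_o)}$ because $\norma{g (t,\cdot,u,w)}_{\L\infty} \le K_g (t)$). The contraction estimate is the crucial step: for $(u_i,w_i) \in \mathcal D_\tau$ and $(U_i,W_i) = \mathcal T (u_i,w_i)$, subtracting the Duhamel representations and the balance laws one must bound the forcing terms in $\L1$; this is done by keeping the $\L\infty$ norm on the ``good'' factor ($W_2$, resp.\ $U_i$, bounded by $R$) and the $\L1$ norm on the difference, since the pointwise Lipschitz bounds in~\textbf{(g)} and~\textbf{(f)} give, e.g., $\norma{\bigl(g (s,\cdot,u_1,w_1) - g (s,\cdot,u_2,w_2)\bigr)W_2 (s)}_{\L1} \le K_g (s)\,\norma{W_2 (s)}_{\L\infty}\bigl(\norma{(u_1-u_2)(s)}_{\L1} + \norma{(w_1-w_2)(s)}_{\L1}\bigr)$, while the difference of the two balance-law coefficients is controlled in $\L1$ by $\norma{(w_1-w_2)(s)}_{\L1}$ through the inequalities in~\textbf{(v)} on $v (t,w_1) - v (t,w_2)$ and on $\div\bigl(v (t,w_1) - v (t,w_2)\bigr)$ and the first inequality in~\textbf{(f)}. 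A Gronwall argument then yields $\norma{\mathcal T (u_1,w_1) - \mathcal T (u_2,w_2)}_{\C0 ([t_o,t_o+\tau];\L1)} \le C (r,\tau)\,\tau\;\norma{(u_1,w_1) - (u_2,w_2)}_{\C0 ([t_o,t_o+\tau];\L1)}$, a contraction for $\tau$ small, and Banach's theorem produces the unique solution on $[t_o,t_o+\tau]$.

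\smallskip\noindent
For item~\textbf{(1)} I would argue directly on any solution of~\eqref{eq:1}: it obeys the Duhamel identity and the weak balance law on the whole of $[t_o,t]$, so Gronwall's lemma applied first to $\norma{w (t)}_{\L1}$ and $\norma{w (t)}_{\L\infty}$ (using $\norma{g}_{\L\infty} \le K_g$), and then to $\norma{u (t)}_{\L1}$ and $\norma{u (t)}_{\L\infty}$ after inserting the bound just found for $\norma{w (s)}_{\L\infty}$ and the inequalities in~\textbf{(v)} and~\textbf{(f)} (in particular $f (t,x,w) \le K_f (t)(1+w)$ and $\norma{\nabla v (t,w)}_{\L\infty} \le K_v (t)\norma{w}_{\L\infty}$), gives exactly the four stated inequalities. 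In particular the $\L1$, $\L\infty$ and total variation of a solution stay bounded by a locally bounded function of $t$ and $r$; hence the local existence time $\tau$ is bounded from below on each $[t_o,T']$ with $T'<T$, so finitely many applications of the local construction cover $[t_o,T']$, and letting $T' \uparrow T$ yields the solution on all of $I$. Uniqueness in $\C0 (I;\L1 (\reali^n;\reali_+^2))$ follows because every such solution satisfies these \emph{a priori} bounds, hence lies in some $\mathcal D_\tau$ on each subinterval, where the fixed point is unique. Items~\textbf{(2)} and~\textbf{(3)} are obtained by repeating the contraction computation for two genuine solutions $(u,w)$, $(\tilde u,\tilde w)$ --- with $r$ now bounding the $\L1$, $\L\infty$ and total variation of the initial data --- keeping the data, resp.\ the controls, distinct: this produces a coupled integral inequality for $\phi (t) = \norma{(u-\tilde u)(t)}_{\L1} + \norma{(w-\tilde w)(t)}_{\L1}$ whose only inhomogeneous term is $\norma{(u_o,w_o) - (\tilde u_o,\tilde w_o)}_{\mathcal X}$, resp.\ $\int_{t_o}^t \norma{(q-\tilde q)(s)}_{\L1 (\reali^n)}\,\d s = \norma{q-\tilde q}_{\L1 ([t_o,t]\times\reali^n)}$, and Gronwall's lemma gives~\eqref{eq:37} and~\eqref{eq:27} with the locally bounded constants $\mathcal C_o (t,r)$, $\mathcal C_q (t,r)$.

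\smallskip\noindent
The main obstacles I expect are the $\BV$ estimate for the balance law and the contraction estimate. The former is precisely what forces the structural assumptions in~\textbf{(v)} bounding $\nabla\bigl(\div v\bigr)$ and its increments in $\L1$: without them the total variation of $u (t)$ cannot be propagated. The latter requires the mismatch between the $\L\infty$-type Lipschitz dependence of $f$ and $g$ on the unknowns and the $\L1$ metric in which the fixed point is set up to be absorbed by the $\L\infty$ \emph{a priori} bounds --- which is exactly why those bounds must be established together with, and not after, the existence argument.
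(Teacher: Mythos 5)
Your proposal is correct and follows essentially the same route as the paper: freeze the coefficients to decouple into a linear parabolic and a linear balance-law problem, establish $\L1$, $\L\infty$ and $\tv$ bounds for these via Gronwall-type and heat-kernel estimates, set up a contraction in the $\C0(\L1)$ metric on a set with total variation bounds, extend in time using the a priori estimates, and obtain items \textbf{(2)}--\textbf{(3)} by rerunning the difference estimates with distinct data or controls plus Gronwall. The only differences are cosmetic: the paper runs the Picard iterates explicitly and proves they are Cauchy in the space $\mathcal{Y}_T$ (rather than invoking Banach's theorem on an invariant set), and it derives the a priori estimates of item \textbf{(1)} uniformly on the iterates and passes to the limit rather than arguing directly on the solution.
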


\noindent To prove Theorem~\ref{thm:main}, following the general lines
of~\cite{parahyp}, we study separately, but symmetrically, the
parabolic and the hyperbolic problems that constitute~\eqref{eq:1},
namely
\begin{displaymath}
  \partial_t w - \mu \, \Delta w = a (t,x) \,w
  \qquad \mbox{ and } \qquad
  \partial_t u +\div (c (t,x) \, u) = b (t,x) \, u + q (t,x) \,.
\end{displaymath}
with $a,b$ and $c$ as in Definition~\ref{def:sol}. All estimates use
exclusively the $\L1$ or $\L\infty$ norms and the total variation in
space.

\begin{remark}
  Note the different behaviors of $f$ and $g$ allowed by
  conditions~\ref{f} and~\ref{g}, namely
  $\sup_{x \in \reali^n } f (t,x,w) \leq K_f (t) \, (1+w)$ and
  $\sup_{(x,u,w) \in \reali^n \times \reali_+ \times \reali_+} g
  (t,x,u,w) \leq K_g (t)$.  For instance, $f$ may well increase in
  $w$, while $g$ may decrease in both $u$ and $w$. Thus, the classical
  Lotka-Volterra source terms $f(w) = \alpha \, w - \beta$ and
  $g(u) = \gamma - \delta \, u$ (for $\alpha, \beta, \gamma, \delta$
  positive and constant) are compatible with~\ref{f} and~\ref{g},
  comprising the problem studied in~\cite{parahyp} when $q \equiv 0$.
\end{remark}

\bigskip

Theorem~\ref{thm:main} allows to consider optimal control problems
based on~\eqref{eq:1}. To this aim, introduce a cost functional
measuring the relevance of the presence of the pest, for instance
quantifying its effect on cultivation. Inspired
by~\cite[\S~4]{ElenaMBE}, we propose a cost of the general form
\begin{equation}
  \label{eq:34}
  \mathcal{I}
  =
  \int_I \int_{\reali^n}
  \Phi \left(t,x, u (t,x), w (t,x)\right) \d{x} \d{t} \,.
\end{equation}
It is clear that various assumptions on the function $\Phi$ ensure
that the integral on the right hand side of~\eqref{eq:34} is a
continuous function of $(u,w)$ in $\mathcal{X}$. Therefore,
\textbf{(3)} in~Theorem~\ref{thm:main} ensures that $\mathcal{I}$ is a
continuous function of the control $q$ in $\L1$.

In practice, the choice of a real strategy depends on a finite set of
parameters, say $p \in \reali^m$, defining, for instance, the
(time/space) support of $q$, or the maximal value of $q$, or its
(time/space) integral. We are thus lead to minimize a compositions of
maps of the type
\begin{displaymath}
  \begin{array}{ccccccc}
    \reali^m
    & \to
    & \L\infty \left(I; \L1 (\reali^n; \reali)\right)
    & \to
    & \mathcal{X}^+
    & \to
    & \reali
    \\
    p
    & \to
    & q
    & \to
    & (u,w)
    & \to
    & \mathcal{I}
  \end{array}
\end{displaymath}
to which, thanks to Theorem~\ref{thm:main}, {Weierstra\ss} Theorem can
be applied, ensuring the existence of an optimal strategy $p_*$. The
actual computation of $p_*$ can be achieved through standard numerical
procedures dedicated to the optimization of Lipschitz continuous
functions. The next section is devoted to specific examples.

\section{Optimized Timing of Parasitoids' Releases}
\label{sec:SNI}

We present below a sample of the possible behaviors of solutions
to~\eqref{eq:1}. Further examples can be found in~\cite{ElenaMBE}.

Inspired by~\cite{Pfab2018489, RossiStacconi20199}, we address the
problem of optimizing the timing and the location of parasitoids'
(=predators') releases in the case of a parasite (=prey) whose
reproduction is seasonal and geographically localized. To this aim, we
consider the following instance of~\eqref{eq:1} in the case of $n = 2$
space dimensions
\begin{equation}
  \label{eq:35}
  \left\{
    \begin{array}{l}
      \partial_t u
      + \div \left(u \,v(w) \right)
      =
      (\alpha \, w - \beta) u + q(t,x)
      \\
      \partial_t w
      - \mu \, \Delta w
      =
      \left(
      \gamma \, (1-\sin t) \, \caratt{B} (x)
      \left(      1
      -
      \dfrac{w}{C}
      \right)
      -
      \delta \, u
      \right)
      \, w,
    \end{array}
  \right.
\end{equation}
Here, as usual, $t$ is time and $x$ is the space coordinate in
$\reali^2$. Moreover, $\alpha\, w$ is the predator natality due to
predation, $\beta$ is the predators' mortality, $\delta$ is the prey
mortality due to predation and $C$ is the prey carrying capacity. The
prey natality\footnote{$\caratt{B}$ is the characteristic function of
  the set $B$: $\caratt{B} (x)=1 \iff x \in B$ and
  $\caratt{B} (x) =0 \iff x \in \reali^n \setminus B$.}
$\gamma \, (1-\sin t) \, \caratt{B} (x)$ is \emph{seasonal}, i.e.~it
is $2\pi$--periodic in time, and \emph{localized}, i.e.~it is
supported in the ball $B$ centered at the origin with radius $2$. The
speed $v$ is chosen as in~\eqref{eq:2}, with $\kappa$ constant. The
parasitoids predate hunting for parasites in the direction of the
highest average prey density gradient within a radius $\ell$, which
hence measures the predator horizon. We summarize here the choices of
functions and parameters in~\eqref{eq:35}--\eqref{eq:2}, apart from
$q$ to be chosen below:
\begin{equation}
  \label{eq:40}
  \begin{array}{@{}r@{\,}c@{\,}l@{\quad}r@{\,}c@{\,}l@{\quad}r@{\,}c@{\,}l}
    \alpha
    & =
    & 0.25
    & \beta
    & =
    & 2.00
    & \gamma
    & =
    & 9.00
    \\
    \delta
    & =
    & 0.50
    & C
    & =
    & 10.0
    & \ell
    & =
    & 0.80
    \\
    \kappa
    & =
    & 2.00
  \end{array}
  \qquad\quad
  \eta (x) =
  \left\{
    \begin{array}{l@{\qquad}r@{\,}c@{\,}l}
      \dfrac{4}{\pi \, \ell^2} \,
      \left(1- \dfrac{\norma{x}^2}{\ell^2}\right)^3
      & \norma{x}
      & \leq
      & \ell\,,
      \\
      0
      & \norma{x}
      & >
      & \ell \,.
    \end{array}
  \right.
\end{equation}
We now seek strategies $q = q (t,x)$ to release parasitoids so that
the parasite population is kept small in the rectangle
$R = [1,3] \times [-3,3]$, which we assume is the region where the
presence of the parasites is most harmful. The regions $B$ and $R$ are
chosen so that they are different but overlapping. Thus, for
simplicity, we aim at the minimization of
\begin{equation}
  \label{eq:39}
  \mathcal{I} = \int_{4\pi}^{12\pi} \int_R w (t,x) \d{x} \, \d{t} \,,
\end{equation}
although within the present framework~\eqref{eq:34} more complex costs
can be considered. Another natural choice, for instance, might be the
minimization of the pest population $w$ only in specific periods,
e.g.~when fruits are ripening on the trees, as in the case of the
\emph{Drosophila suzukii}.  As initial datum we choose
\begin{equation}
  \label{eq:41}
  u_o (x) \equiv 0\,,\qquad w_o (x) = 2 \, \caratt{B} (x) \,.
\end{equation}
Clearly, Theorem~\ref{thm:main} applies
to~\eqref{eq:35}--\eqref{eq:2}--\eqref{eq:40}--\eqref{eq:41} and the
cost~\eqref{eq:39} fits into~\eqref{eq:34}.

In the examples below, we use the Lax--Friedrichs
scheme~\cite[\S~12.5]{LeVequeBook2002} to integrate the hyperbolic
convective term and an explicit finite difference algorithm to deal
with the parabolic equation. Furthermore, we exploit dimensional
splitting~\cite[\S~19.5]{LeVequeBook2002} and a further splitting to
take care of the source terms~\cite[\S~17.1]{LeVequeBook2002}, which
are computed through a second order Runge--Kutta method (corresponding
to $\alpha = 1/2$ in~\cite[\S~12.5, p.~327]{Num}). Refer
to~\cite{BurgerChowellMuletVillada1, BurgerChowellMuletVillada2,
  RS2016} for alternative algorithms. The numerical domain is the
rectangle $[-4-\ell,4+\ell]\times[-4-\ell,4+\ell]$ and we let the
parameters $\alpha, \beta$ and $\gamma$ vanish outside the physical
domain $[-4,4]\times[-4,4]$. The computations below were obtained with a uniform mesh consisting of $2^{10} \times 2^{10}$ points.

\medskip

First, as a reference case, we
integrate~\eqref{eq:35}--\eqref{eq:2}--\eqref{eq:40}--\eqref{eq:41}
with $q \equiv 0$. The results are displayed in Figure~\ref{fig:u0}.
\begin{figure}[!ht]
  \includegraphics[width=0.4\linewidth, trim= 419 90 20
  40,clip=true]{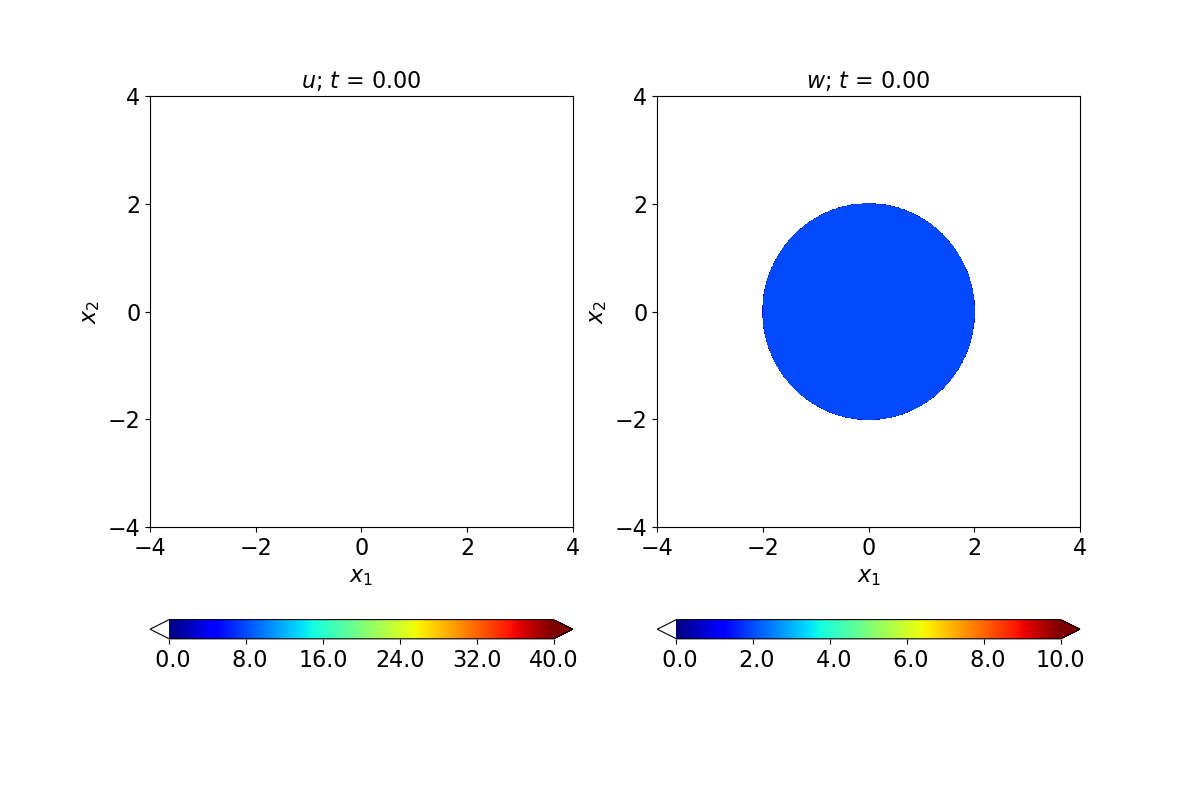} %
  \includegraphics[width=0.6\linewidth, trim= 20 12 40
  20,clip=true]{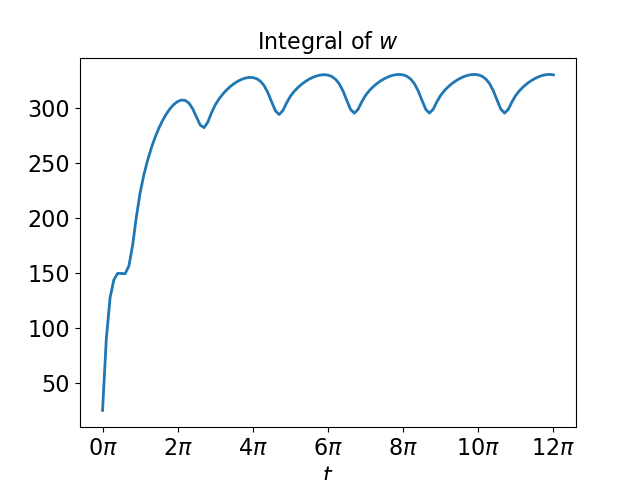}
  \caption{Left, the initial datum~\eqref{eq:41} for $w$ in the
    $x$--plane and, right, the total amount of parasites
    $\int_{[-4,4]^2} w (t,x) \d{x}$ on the whole physical domain as a
    function of time.}
  \label{fig:u0}
\end{figure}
Since parasitoids are absent, parasites evolve with a logistic growth
with capacity $C$ and a $2\pi$--periodic natality. After two periods,
the total number of parasites is approximately time periodic, with a
high mean value.

We now assume that at time $4\pi$ measures need to be taken to reduce
the presence of parasites. This is achieved through the release in the
environment of the parasitoid $u$, which is described by the function
$q$ in~\eqref{eq:35}. Different strategies correspond to different
choices of $q$. The ones we consider below differ both in the space
and time dependence: they may take place in the ball $B$ where the
parasites are born, or on the rectangle $R$ where parasites are
harmful. Moreover, they can take place uniformly in time (on
$I_0 = [4\pi, 12\pi]$) or in the time intervals where parasites are
more ($I_1 = \sin^{-1} ([-1, -1/\sqrt2]) \cap I_0$), middle
($I_2 = \cos^{-1} ([-1, -1/\sqrt2]) \cap I_0$) or less
($I_3 = \sin^{-1} ([1/\sqrt2, 1]) \cap I_0$) prolific, see
Figure~\ref{fig:gq}.
\begin{figure}[!h]
  \centering \includegraphics[width=0.25\linewidth]{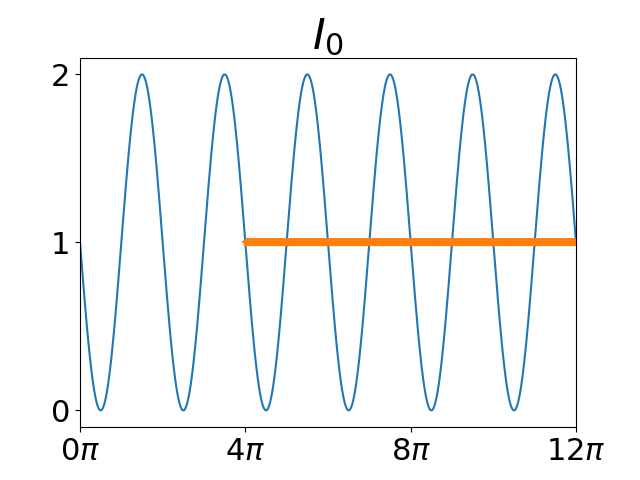}%
  \includegraphics[width=0.25\linewidth]{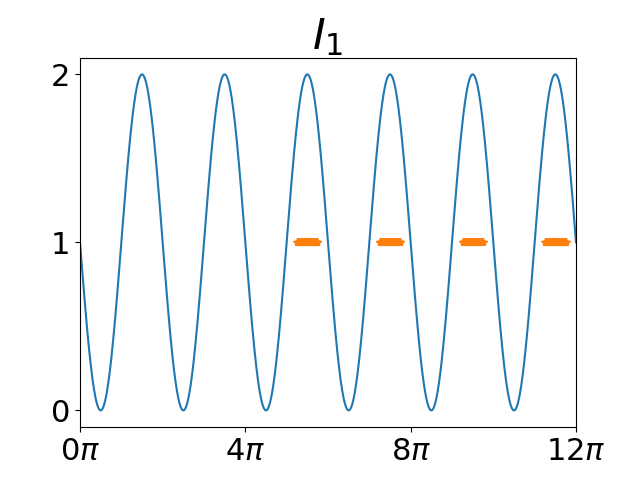}%
  \includegraphics[width=0.25\linewidth]{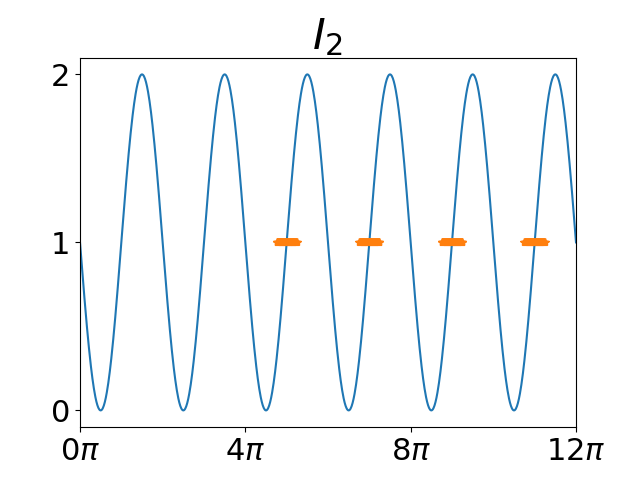}%
  \includegraphics[width=0.25\linewidth]{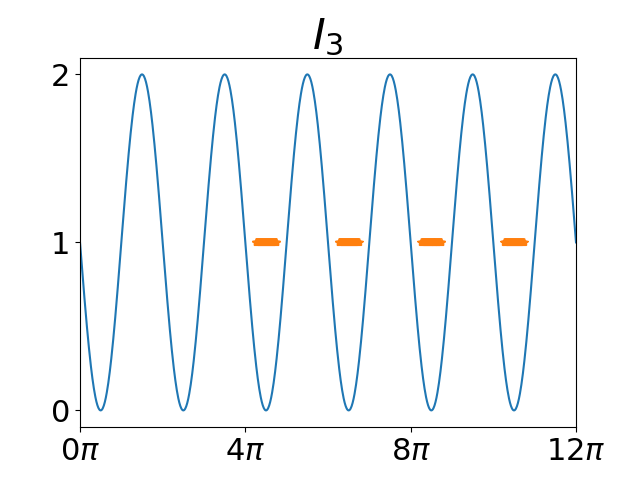}\\
  \caption{Characteristic functions of the time intervals, from left
    to right, $I_0$, $I_1$, $I_2$ and $I_3$ used in the definitions of
    the controls~\eqref{eq:16}, plotted together with the map
    $t \to 1-\sin t$ appearing in the natality of the parasite
    in~\eqref{eq:35}.}
  \label{fig:gq}
\end{figure}
These strategies correspond to the following choices of $q$:
\begin{equation}
  \label{eq:16}
  \begin{array}{r@{\,}c@{\,}l@{\qquad\qquad}r@{\,}c@{\,}l}
    q^B_0 (t)
    & =
    & 3.166287 \, \caratt{I_0} (t) \, \caratt{B} (x)
    & q^R_0 (t)
    & =
    & 3.315728 \, \caratt{I_0} (t)\, \caratt{R} (x)
    \\
    q^B_1 (t)
    & =
    & 12.66515 \, \caratt{I_1} (t)\, \caratt{B} (x)
    & q^R_1 (t)
    & =
    & 13.26291 \, \caratt{I_1} (t)\, \caratt{R} (x)
    \\
    q^B_2 (t)
    & =
    & 12.66515 \, \caratt{I_2} (t)\, \caratt{B} (x)
    & q^R_2 (t)
    & =
    & 13.26291 \, \caratt{I_2} (t)\, \caratt{R} (x)
    \\
    q^B_3 (t)
    & =
    & 12.66515 \, \caratt{I_3} (t) \, \caratt{B} (x)
    & q^R_3 (t)
    & =
    & 13.26291\, \caratt{I_3} (t)\, \caratt{R} (x) \,.
  \end{array}
\end{equation}
The above values are chosen so that the amount of parasitoids inserted
in the environment is constant, i.e.
\begin{displaymath}
  \int_0^{12\pi} \int_{\reali^2} q_i^A (t,x) \d{x} \d{t} = 1000\,
  \quad \mbox{ for } i = 0, 1, 2, 3 \mbox{ and } A = B,R \,.
\end{displaymath}

The numerical integrations
of~\eqref{eq:35}--\eqref{eq:2}--\eqref{eq:40}--\eqref{eq:41} with the
controls~\eqref{eq:16} yield the following values for the
cost~\eqref{eq:39}:
\begin{center}
  \begin{tabular}{c|c|c|c|c}
    $\mathcal{I}$
    & 0
    & 1
    & 2
    & 3
    \\ \hline
    $B$
    & 1179.05
    & 1318.74
    & 1332.75
    & 1232.41
    \\ \hline
    $R$
    & 874.420 
    & 1068.13
    & 1098.85
    & 1080.19
  \end{tabular}
  \qquad\qquad when $q \equiv 0$, $\mathcal{I} = 1866.98$.
\end{center}
In the different cases of the controls in~\eqref{eq:16}, the
instantaneous costs $t \to \int_R w (t,x) \, \d{x}$ are displayed in
Figure~\ref{fig:costs}. All solutions
to~\eqref{eq:35}--\eqref{eq:2}--\eqref{eq:40}--\eqref{eq:41} show a
somewhat periodic behavior for $t > 4\pi$.
\begin{figure}[!h]
  \centering
  \begin{subfigure}[t]{0.3\textwidth}
    \includegraphics[width=\linewidth, trim = 30 15 30 37, clip =
    true]{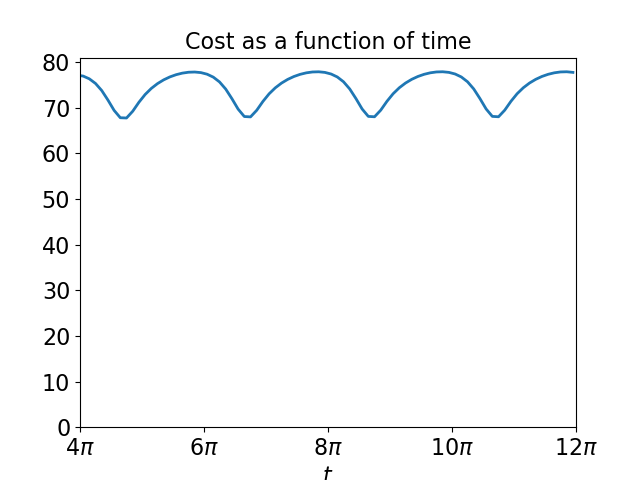}
    \caption{No control: $q\equiv 0$.}
    \label{fig:u0c}
    \vspace*{2mm}
  \end{subfigure}%
  ~
  \begin{subfigure}[t]{0.3\textwidth}
    \includegraphics[width=\linewidth, trim = 30 15 30 37, clip =
    true]{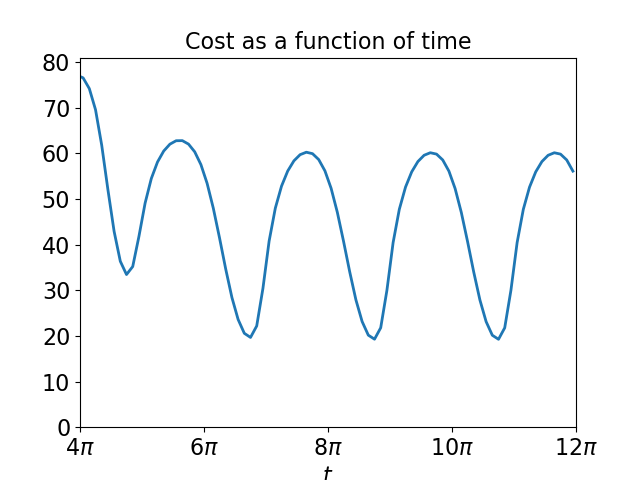}
    \caption{$q_0^B$}
    \label{fig:q}
    \vspace*{2mm}
  \end{subfigure}%
  ~
  \begin{subfigure}[t]{0.3\textwidth}
    \includegraphics[width=\linewidth, trim = 30 15 30 37, clip =
    true]{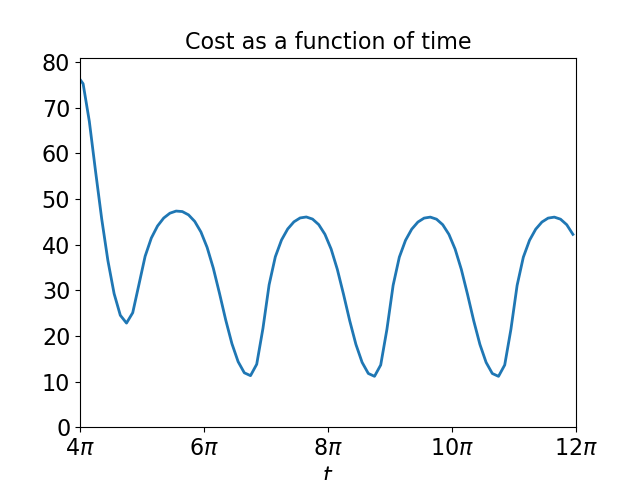}
    \caption{$q_0^R$}
    \label{fig:qr}
    \vspace*{2mm}
  \end{subfigure}\\
  \begin{subfigure}[t]{0.3\textwidth}
    \includegraphics[width=\linewidth, trim = 30 15 30 37, clip =
    true]{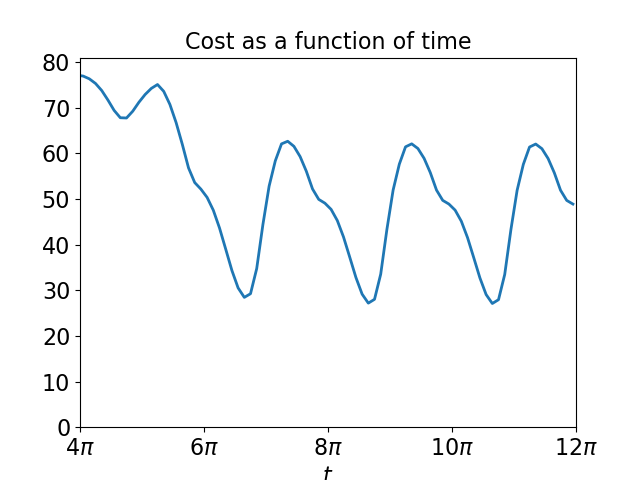}
    \caption{$q_1^B$}
    \label{fig:q1}
    \vspace*{2mm}
  \end{subfigure}%
  ~
  \begin{subfigure}[t]{0.3\textwidth}
    \includegraphics[width=\linewidth, trim = 30 15 30 37, clip =
    true]{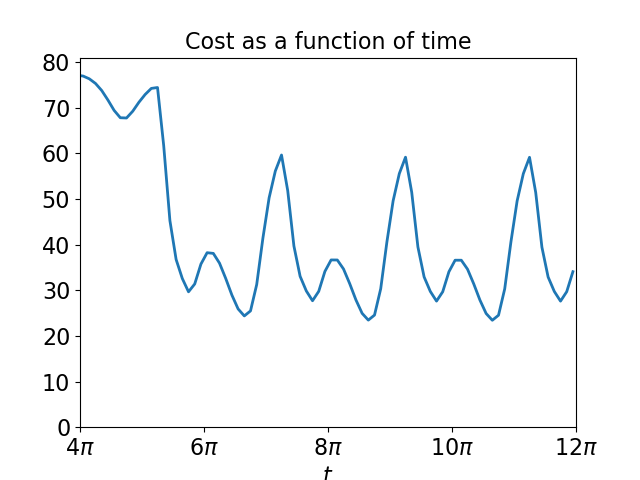}
    \caption{$q_1^R$}
    \label{fig:q1r}
    \vspace*{2mm}
  \end{subfigure}%
  ~
  \begin{subfigure}[t]{0.3\textwidth}
    \includegraphics[width=\linewidth, trim = 30 15 30 37, clip =
    true]{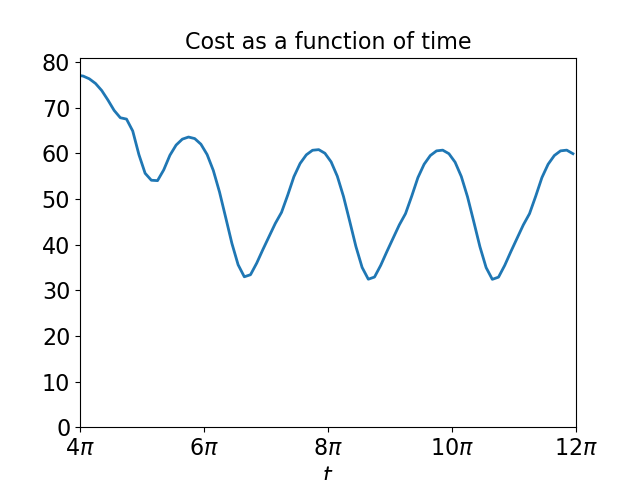}
    \caption{$q_2^B$}
    \label{fig:q3}
    \vspace*{2mm}
  \end{subfigure}\\
  \begin{subfigure}[t]{0.3\textwidth}
    \includegraphics[width=\linewidth, trim = 30 15 30 37, clip =
    true]{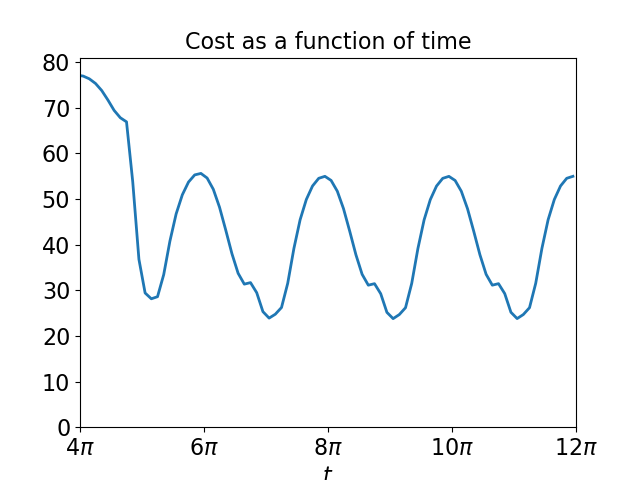}
    \caption{$q_2^R$}
    \label{fig:q3r}
  \end{subfigure}%
  ~
  \begin{subfigure}[t]{0.3\textwidth}
    \includegraphics[width=\linewidth, trim = 30 15 30 37, clip =
    true]{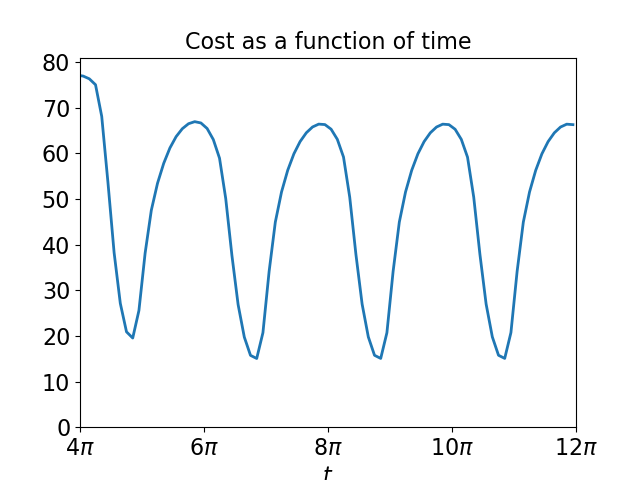}
    \caption{$q_3^B$}
    \label{fig:q2}
  \end{subfigure}%
  ~
  \begin{subfigure}[t]{0.3\textwidth}
    \includegraphics[width=\linewidth, trim = 30 15 30 37, clip =
    true]{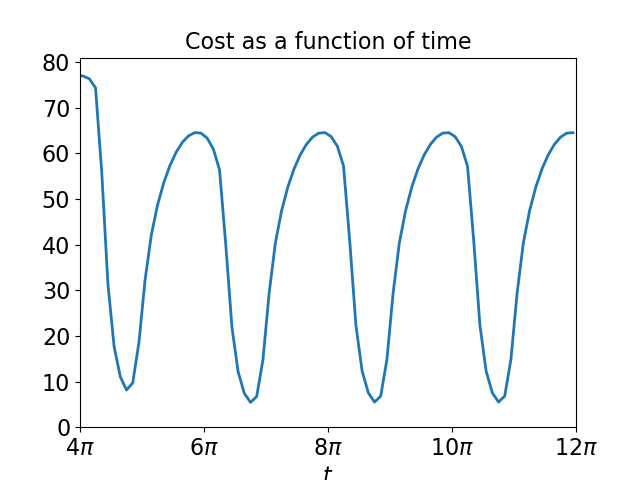}
    \caption{$q_3^R$}
    \label{fig:q2r}
  \end{subfigure}\\
  \caption{Graphs of the instantaneous cost
    $t \to \int_R w (t,x) \d{x}$ corresponding to the
    controls~\eqref{eq:16} on the time interval $[4\pi,
    12\pi]$. Figure~\ref{fig:u0c} corresponds to the diffusion of
    parasites with no control. The most effective strategy, in the
    sense it minimizes~\eqref{eq:39}, is in Figure~\ref{fig:qr}.}
  \label{fig:costs}
\end{figure}

With respect to the cost~\eqref{eq:39}, where the rectangle $R$
obviously plays a key role, the most effective strategy consists in a
constant release of parasitoids over the rectangle $R$, corresponding
to the control $q_0^R$ in~\eqref{eq:16}. This solution is somewhat
periodic and displays a maximum, respectively a minimum, of the
running cost at the time $t \approx 33.30$, respectively
$t \approx 30.79$: level plots of the corresponding solutions computed
at these times are in Figure~\ref{fig:qrContour}.

It is evident that the convective term in the first equation
in~\eqref{eq:35} allows the parasitoids to move towards the region
with the highest parasite concentration. On the other hand, the
Laplace operator in the second equation makes the parasites diffuse
everywhere.
\begin{figure}[!ht]
  \includegraphics[width=0.5\linewidth, trim = 57 90 10 40,
  clip=true]{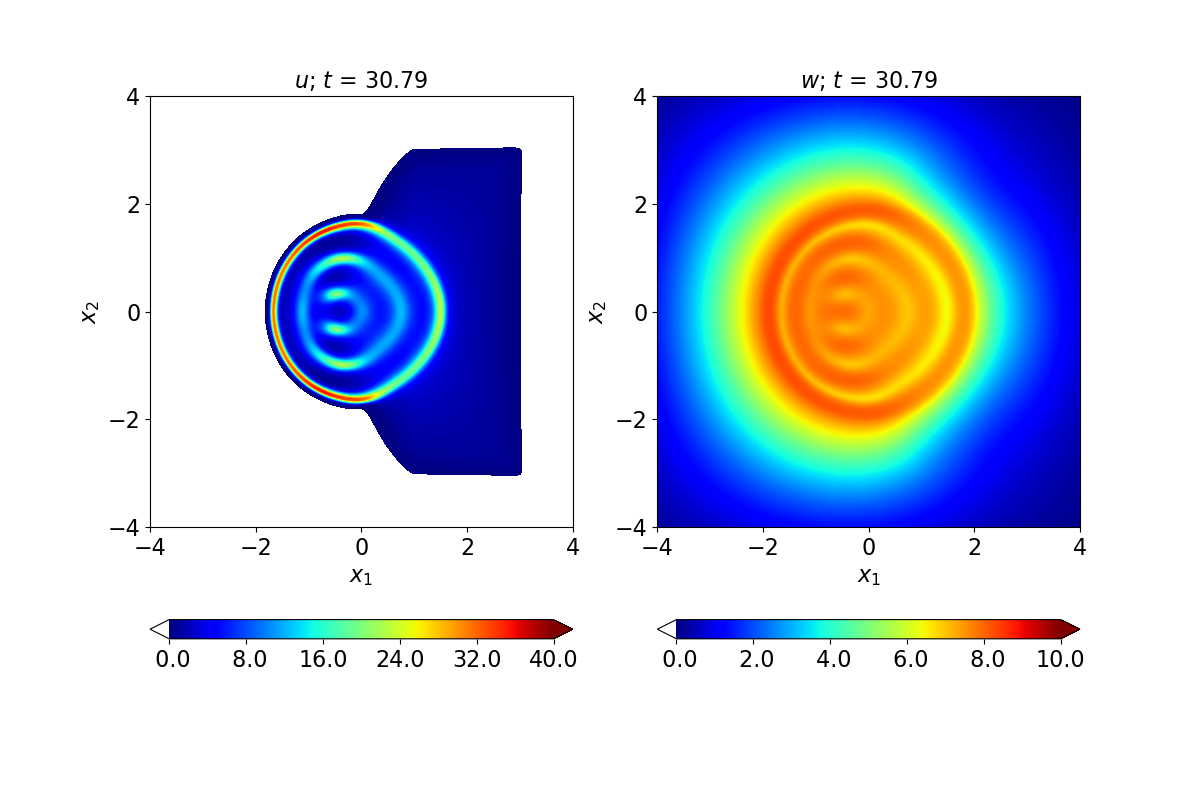}%
  ~%
  \includegraphics[width=0.5\linewidth, trim = 57 90 10 40, clip=true]{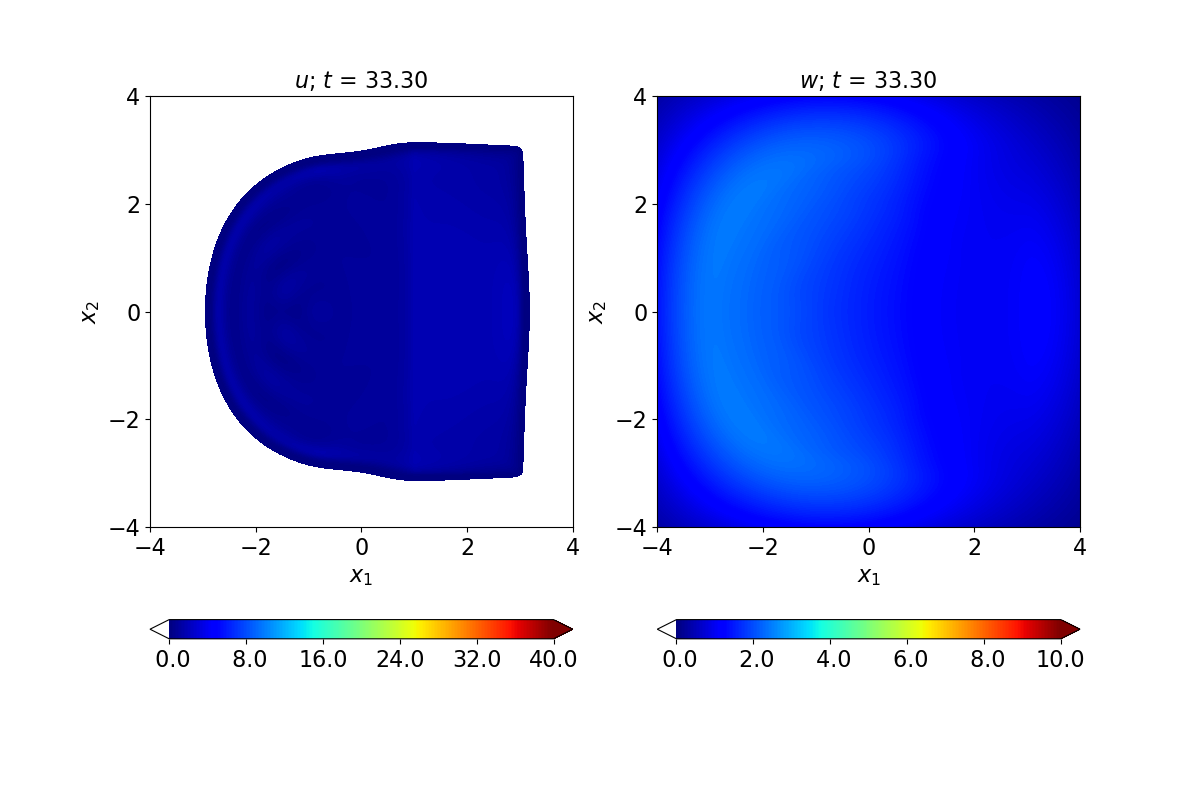}\\
  \caption{Contour plots of the solution to corresponding to the best
    strategy $q_0^R$ in~\eqref{eq:16}. Left, at time $t = 30.79$
    approximately corresponding to a maximum of the running cost and,
    right, at time $t = 33.30$ approximately corresponding to a
    minimum.}
  \label{fig:qrContour}
\end{figure}

\smallskip

We expect that a precise simulation of a real scenario requires a
model more complex than~\eqref{eq:35}--~\eqref{eq:39}, as well as the
obvious tuning of the various parameters. For instance, also
$\alpha, \beta$ and $\delta$ are likely to be better substituted by
\emph{``seasonal''} (i.e., time periodic) functions. While such an
experimental fitting is out of the scopes of the present work, we
remark that the generality of the framework presented here, and in
particular Theorem~\ref{thm:main}, allows to comprehend it.

Boundary conditions deserve a specific treatment on their own. At the
modeling level, the immigration of parasites is neglected in the
present work. At the analytic level, general well posedness and
stability results are currently apparently still missing,
see~\cite{siam2018} for recent preliminary results. The numerical
algorithm to deal with boundary conditions would then be necessarily
adapted.

\section{Analytic Proofs}
\label{sec:AP}


The following lemmas will be of use below. The proofs, where
immediate, are omitted.

\begin{lemma}[{\cite[Formula~(1.8) and Remark~1.16]{Giusti}}]
  \label{lem:approx2}
  Let $\psi \in (\L1 \cap \L\infty \cap \BV) (\reali^n;
  \reali)$. Then, there exists a sequence
  $\psi_h \in \C\infty (\reali^n; \reali)$ such that for
  $h \in \naturali \setminus \{0\}$
  \begin{align}
    \label{eq:6}
    \psi_h \underset{h \to +\infty}{\to}
    & \psi \mbox{ in } \L1 (\reali^n; \reali),
    & \norma{\psi_h}_{\L\infty (\reali^n)} \leq
    & \norma{\psi}_{\L\infty (\reali^n)},
    & \tv (\psi_h) \underset{h \to +\infty}{\to}
    & \tv (\psi).
  \end{align}
\end{lemma}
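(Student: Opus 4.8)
The plan is to produce $\psi_h$ by mollification. Fix a mollifier $\rho \in \Cc\infty (\reali^n; \reali_+)$ with $\spt \rho \subseteq \{x \in \reali^n : \norma{x} \leq 1\}$ and $\int_{\reali^n} \rho \, \d{x} = 1$, set $\rho_\epsilon (x) = \epsilon^{-n} \, \rho (x/\epsilon)$ for $\epsilon > 0$, and define $\psi_h = \psi * \rho_{1/h}$ for $h \in \naturali \setminus \{0\}$. Since $\rho_{1/h} \in \Cc\infty (\reali^n; \reali)$ and $\psi \in \L1 (\reali^n; \reali)$, differentiation under the integral sign gives $\psi_h \in \C\infty (\reali^n; \reali)$, while the standard approximation property of mollifiers on $\L1$ yields $\psi_h \to \psi$ in $\L1 (\reali^n; \reali)$ as $h \to +\infty$. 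This is the first statement in~\eqref{eq:6}.

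The $\L\infty$ bound follows from Young's convolution inequality: since $\rho_{1/h} \geq 0$ and $\norma{\rho_{1/h}}_{\L1 (\reali^n)} = 1$,
\[
  \norma{\psi_h}_{\L\infty (\reali^n)}
  \leq
  \norma{\psi}_{\L\infty (\reali^n)} \, \norma{\rho_{1/h}}_{\L1 (\reali^n)}
  =
  \norma{\psi}_{\L\infty (\reali^n)} ,
\]
which is the second statement in~\eqref{eq:6}.

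For the total variation I would argue by a two-sided estimate. Denote by $D\psi$ the distributional gradient of $\psi$, a finite $\reali^n$-valued Radon measure with $\modulo{D\psi} (\reali^n) = \tv (\psi)$ because $\psi \in \BV (\reali^n; \reali)$. Then $\psi_h \in \W{1}{1} (\reali^n; \reali)$ with $\nabla \psi_h = (D\psi) * \rho_{1/h}$, hence
\[
  \tv (\psi_h)
  =
  \norma{\nabla \psi_h}_{\L1 (\reali^n)}
  \leq
  \modulo{D\psi} (\reali^n) \, \norma{\rho_{1/h}}_{\L1 (\reali^n)}
  =
  \tv (\psi) ,
\]
so that $\limsup_{h \to +\infty} \tv (\psi_h) \leq \tv (\psi)$. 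Conversely, from $\psi_h \to \psi$ in $\L1 (\reali^n; \reali)$ and the lower semicontinuity of the total variation with respect to $\L1_{\rm loc}$ convergence one gets $\tv (\psi) \leq \liminf_{h \to +\infty} \tv (\psi_h)$. Combining the two inequalities yields $\tv (\psi_h) \to \tv (\psi)$, the last statement in~\eqref{eq:6}.

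No step is a genuine obstacle; the only point deserving attention is that the statement requires the \emph{convergence} of the total variations and not merely a uniform bound, which is precisely why one needs both the elementary monotonicity estimate $\tv (\psi_h) \leq \tv (\psi)$ and the lower semicontinuity of $\tv$. Alternatively, the assertion is verbatim~\cite[Formula~(1.8) and Remark~1.16]{Giusti}, so one may simply quote that reference.
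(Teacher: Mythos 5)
Your mollification argument is correct: the upper bound $\tv(\psi_h)\le\tv(\psi)$ combined with lower semicontinuity of the total variation under $\L1$ convergence is exactly the standard proof of the cited result. The paper itself omits the proof and simply quotes \cite[Formula~(1.8) and Remark~1.16]{Giusti}, and your construction coincides with the mollification technique the paper uses for the neighbouring Lemma~\ref{lem:approx3}, so there is nothing substantively different here.
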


\begin{lemma}
  \label{lem:approx3}
  Let $\psi \in (\L\infty \cap \BV) (\reali^n; \reali)$. Then, there
  exists a sequence $\psi_h \in \C\infty (\reali^n; \reali)$ such that
  for $h \in \naturali \setminus \{0\}$, $\psi_h \to \psi$\ in
  $\L\infty (\reali^n; \reali)$, so that also $\psi_h \to \psi$ in
  $\Lloc1 (\reali^n; \reali)$, and
  \begin{displaymath}
    \norma{\psi_h}_{\L\infty (\reali^n)}
    \leq
    \norma{\psi}_{\L\infty (\reali^n)}, \quad
    \tv (\psi_h)
    \leq
    \tv (\psi).
  \end{displaymath}
\end{lemma}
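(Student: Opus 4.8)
The plan is to take for $\psi_h$ the classical mollifications of $\psi$. I would fix an even mollifier $\rho\in\Cc\infty(\reali^n;\reali_+)$ with $\int_{\reali^n}\rho\,\d x=1$ and $\spt\rho\subseteq\{\modulo x\le1\}$, set $\rho_h(x)=h^n\,\rho(h\,x)$ for $h\in\naturali\setminus\{0\}$ and let $\psi_h:=\rho_h*\psi$. Since $\psi\in(\L\infty\cap\BV)(\reali^n;\reali)\subseteq\Lloc1(\reali^n;\reali)$ and $\rho_h\in\Cc\infty(\reali^n;\reali)$, it is immediate that $\psi_h\in\C\infty(\reali^n;\reali)$ for every $h$; what then remains are the two quantitative bounds and the convergence.

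For the bounds I would first invoke Young's inequality,
\begin{displaymath}
  \norma{\psi_h}_{\L\infty(\reali^n)}
  \le
  \norma{\rho_h}_{\L1(\reali^n)}\,\norma{\psi}_{\L\infty(\reali^n)}
  =
  \norma{\psi}_{\L\infty(\reali^n)}\,,
\end{displaymath}
and then use that convolution commutes with the distributional gradient, so that $\nabla\psi_h=\rho_h*(D\psi)$, where $D\psi$ is the finite $\reali^n$-valued Radon measure representing the gradient of $\psi$ and $\modulo{D\psi}(\reali^n)=\tv(\psi)$; since the integrand is nonnegative, Tonelli's theorem then yields
\begin{displaymath}
  \tv(\psi_h)
  =
  \int_{\reali^n}\modulo{(\rho_h*D\psi)(x)}\,\d x
  \le
  \int_{\reali^n}\int_{\reali^n}\rho_h(x-y)\,\d{\modulo{D\psi}}(y)\,\d x
  =
  \modulo{D\psi}(\reali^n)
  =
  \tv(\psi)\,.
\end{displaymath}
Equivalently, by duality: for every $\phi\in\Cc1(\reali^n;\reali^n)$ with $\norma{\phi}_{\L\infty(\reali^n)}\le1$ one has $\int_{\reali^n}\psi_h\,\div\phi\,\d x=\int_{\reali^n}\psi\,\div(\rho_h*\phi)\,\d x$, with $\rho_h*\phi\in\Cc1(\reali^n;\reali^n)$ and $\norma{\rho_h*\phi}_{\L\infty(\reali^n)}\le1$, so the supremum defining $\tv(\psi_h)$ is dominated by the one defining $\tv(\psi)$.

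The convergence is the delicate point, and promoting it to the $\L\infty$ topology is the step I expect to be the main obstacle. A standard approximate-identity estimate gives $\psi_h\to\psi$ at every Lebesgue point of $\psi$, hence Lebesgue-a.e.; combined with the uniform bound $\modulo{\psi_h}\le\norma{\psi}_{\L\infty(\reali^n)}$ just obtained, dominated convergence on each ball yields $\psi_h\to\psi$ in $\Lloc1(\reali^n;\reali)$, indeed in $\Lloc{p}(\reali^n;\reali)$ for every $1\le p<+\infty$. Getting from here to $\psi_h\to\psi$ in $\L\infty(\reali^n;\reali)$ rests on the uniform continuity of $\psi$, through $\norma{\psi_h-\psi}_{\L\infty(\reali^n)}\le\sup_{\modulo y\le1/h}\norma{\psi(\cdot-y)-\psi}_{\L\infty(\reali^n)}$, whose right hand side tends to $0$ exactly when $\psi$ admits a bounded uniformly continuous representative — the case in all uses of this lemma in the sequel, where $\psi$ is in fact Lipschitz, so there the construction above fulfils every requirement. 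For a general $\psi\in(\L\infty\cap\BV)(\reali^n;\reali)$ — which need not have a continuous representative — the convergence is to be read in the $\Lloc1$ (equivalently $\Lloc{p}$, $p<+\infty$) sense established above, which is the one exploited afterwards.
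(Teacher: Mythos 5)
Your construction is exactly the paper's: mollify, set $\psi_h=\rho_h*\psi$, and your proofs of the two bounds (Young's inequality for $\norma{\psi_h}_{\L\infty(\reali^n)}\leq\norma{\psi}_{\L\infty(\reali^n)}$; commuting the convolution with the measure $D\psi$, or equivalently the duality argument, for $\tv(\psi_h)\leq\tv(\psi)$) are correct and in fact more self-contained than the paper's, which simply cites \cite[Propositions~8.7 and~8.68]{FollandBook}. Your $\Lloc1$ convergence argument (a.e.\ convergence at Lebesgue points plus the uniform bound and dominated convergence on balls) is also fine.

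The point where you stop short, the uniform convergence, is precisely the delicate one, and your diagnosis is accurate. The paper obtains $\norma{\psi_h-\psi}_{\L\infty(\reali^n)}\to0$ by citing \cite[Theorem~8.14]{FollandBook}, but the uniform-convergence clause of that theorem assumes $\psi$ bounded and \emph{uniformly continuous}; a general $\psi\in(\L\infty\cap\BV)(\reali^n;\reali)$ need not have a continuous representative, and for $\psi=\caratt{B}$ (a characteristic function of a ball, exactly the kind of datum used later in the paper) one has $\norma{\rho_h*\psi-\psi}_{\L\infty(\reali^n)}\geq 1/2$ for every $h$, so the $\L\infty$ convergence asserted in the lemma fails in that generality. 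Thus your refusal to prove it is not a gap in your argument but a correctly identified overstatement in the statement itself. One caveat on your closing remark: in the sequel the lemma feeds Lemma~\ref{lem:approx}, which is applied to $b(t,\cdot)=f\left(t,\cdot,w(t,\cdot)\right)$ and to $q(t,\cdot)$, and these are merely $\BV$ in $x$, not Lipschitz; however, as you observe, what is actually invoked downstream is only the $\Lloc1$ convergence together with the $\L\infty$ and total variation bounds, and those you have established in full.
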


\begin{proof}
  Let $\rho$ be a mollifier: $\rho \in \Cc\infty (\reali^n, \reali)$,
  $\rho \geq 0$,
  $\spt \rho \subseteq\left\{x \in \reali^2 \colon \norma{x} \leq
    1\right\}$ and $\int_{\reali^n} \rho = 1$. Define
  $\rho_h (x) = h^n \, \rho (h\,x)$ for
  $h \in \naturali \setminus\{0\}$ and set $\psi_h = \rho_h *
  \psi$. The $\Lloc1$ convergence follows from
  $\norma{\psi_h - \psi}_{\L\infty (\reali^n)} \to 0$, ensured
  by~\cite[Theorem~8.14]{FollandBook}. The $\L\infty$ estimate is a
  consequence of~\cite[Proposition~8.7]{FollandBook}. Finally,
  \cite[Proposition~8.68]{FollandBook} implies the latter bound.
\end{proof}

\begin{lemma}
  \label{lem:approx}
  Let $\psi \in \L\infty (I \times \reali^n; \reali)$ be such that for
  all $t \in I$, $\psi (t) \in \BV (\reali^n; \reali)$. Then, there
  exists a sequence $\psi_h \in \L\infty (I \times \reali^n; \reali)$
  such that for all $h \in \naturali \setminus \{0\}$ and for
  a.e.~$t \in I$,
  $\psi_h (t) \in (\C\infty\cap \BV) (\reali^n; \reali)$,
  $\psi_h (t) \to \psi (t)$ in $\Lloc1 (\reali^n; \reali)$ and
  \begin{displaymath}
    \norma{\psi_h (t)}_{\L\infty (\reali^n)}
    \leq
    \norma{\psi (t)}_{\L\infty (\reali^n)}
    ,\quad
    \tv \!\left(\psi_h (t)\right)
    \leq
    \tv \!\left(\psi (t)\right).
  \end{displaymath}
\end{lemma}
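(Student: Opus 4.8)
The natural strategy is to mollify $\psi$ in the space variable only, using one and the same mollifier for every time, so that all the conclusions follow by applying Lemma~\ref{lem:approx3} to each time slice $\psi(t)$. Concretely, I would fix a mollifier $\rho \in \Cc\infty(\reali^n;\reali)$ with $\rho\geq 0$, $\spt\rho\subseteq\{x\in\reali^n:\norma{x}\leq1\}$ and $\int_{\reali^n}\rho=1$, set $\rho_h(x)=h^n\,\rho(h\,x)$ for $h\in\naturali\setminus\{0\}$, and define
\begin{displaymath}
  \psi_h(t,x)
  =
  \left(\rho_h * \psi(t)\right)(x)
  =
  \int_{\reali^n}\rho_h(x-y)\,\psi(t,y)\,\d{y}\,.
\end{displaymath}
This integral is well defined for a.e.~$t\in I$ and every $x\in\reali^n$: since $\psi\in\L\infty(I\times\reali^n;\reali)$, for a.e.~$t$ the slice $\psi(t)$ lies in $\L\infty(\reali^n;\reali)\subseteq\Lloc1(\reali^n;\reali)$, with $\norma{\psi(t)}_{\L\infty(\reali^n)}\leq\norma{\psi}_{\L\infty(I\times\reali^n)}$.

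The first step would be the slicewise argument. For a.e.~$t\in I$ the function $\psi(t)$ belongs to $(\L\infty\cap\BV)(\reali^n;\reali)$ --- the $\L\infty$ membership as just noted, the $\BV$ membership being assumed for every $t$ --- and $\psi_h(t)=\rho_h*\psi(t)$ is exactly the sequence produced by Lemma~\ref{lem:approx3}. Hence, for a.e.~$t$, $\psi_h(t)\in\C\infty(\reali^n;\reali)$, $\psi_h(t)\to\psi(t)$ in $\Lloc1(\reali^n;\reali)$, and
\begin{displaymath}
  \norma{\psi_h(t)}_{\L\infty(\reali^n)}\leq\norma{\psi(t)}_{\L\infty(\reali^n)}\,,
  \qquad
  \tv\!\left(\psi_h(t)\right)\leq\tv\!\left(\psi(t)\right)\,;
\end{displaymath}
in particular $\tv(\psi_h(t))<+\infty$, hence $\psi_h(t)\in(\C\infty\cap\BV)(\reali^n;\reali)$. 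Since the kernel $\rho_h$ does not depend on $t$, all of this holds along the single sequence indexed by $h$, as the statement requires.

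The remaining --- and only genuinely new --- point, which I expect to be the main, if modest, obstacle, is to check that $\psi_h$ is a well defined element of $\L\infty(I\times\reali^n;\reali)$, i.e.~that it is jointly measurable. I would deduce this from its Carath\'eodory structure: for a.e.~$t$ the map $x\mapsto\psi_h(t,x)$ is smooth, hence continuous, while for every $x\in\reali^n$ the map $t\mapsto\psi_h(t,x)=\int_{\reali^n}\rho_h(x-y)\,\psi(t,y)\,\d{y}$ is measurable by Fubini--Tonelli, because $\psi$ is jointly measurable and $y\mapsto\rho_h(x-y)$ is bounded with compact support. A function that is measurable in $t$ and continuous in $x$ is jointly (Lebesgue) measurable, and the bound $\modulo{\psi_h(t,x)}\leq\norma{\psi}_{\L\infty(I\times\reali^n)}$, valid for a.e.~$t$ and all $x$, then gives $\psi_h\in\L\infty(I\times\reali^n;\reali)$. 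Collecting the slicewise properties with this measurability statement completes the proof.
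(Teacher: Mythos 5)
Your argument is correct and coincides with the one the paper leaves implicit: Lemma~\ref{lem:approx} is listed among the ``immediate'' omitted proofs precisely because it follows by applying the mollification of Lemma~\ref{lem:approx3} slicewise in $x$ with a single, time-independent kernel $\rho_h$. Your only added step --- the Carath\'eodory-type check (smooth in $x$ for a.e.~$t$, measurable in $t$ by Fubini--Tonelli) ensuring $\psi_h \in \L\infty (I \times \reali^n; \reali)$ --- is exactly the point the paper glosses over, and you handle it correctly.
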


\subsection{About the Parabolic Equation
  \texorpdfstring{$\partial_t w - \mu \, \Delta w = a (t,x) \,w$}{}}
\label{sec:parabolic}

We focus on the parabolic problem:
\begin{equation}
  \label{eq:par}
  \left\{
    \begin{array}{l}
      \partial_t w - \mu \, \Delta w = a (t,x) \,w
      \\
      w(t_o,x) = w_o (x)
    \end{array}
  \right.
  \qquad (t,x) \in I \times \reali^n .
\end{equation}
Similarly to~\cite{parahyp}, solutions to~\eqref{eq:par} are sought as
$\L1$ function defined on $\reali^n$ and all estimates refer to the
$\L1$ or $\L\infty$ norms, see~\eqref{eq:normX}, which is somewhat
unusual in relation to~\eqref{eq:par}.

\begin{definition}
  \label{def:par}
  Let $a \in \L\infty (I \times \reali^n;\reali)$ and
  $w_o \in \L1 (\reali^n;\reali)$. A \emph{solution} to
  problem~\eqref{eq:par} is a function
  $w \in \C0 (I; \L1 (\reali^n; \reali))$ such that
  \begin{equation}
    \label{eq:solNucleo}
    w (t,x) = \left( H_\mu (t) * w_o \right) (x)
    +
    \int_{t_0}^t
    \left( H_\mu (t-\tau) * \left(a (\tau) \, w (\tau)\right) \right)
    (x) \, \d\tau \,.
  \end{equation}
\end{definition}

\begin{lemma}[{\cite[Lemma~2.4]{parahyp}}]
  \label{lem:paraSol}
  Let $a \in \L\infty (I \times \reali^n; \reali)$. Assume that
  $w_o \in \L1 (\reali^n; \reali)$ and
  $w \in \C0 (I; \L1 (\reali^n; \reali))$. Then, the following
  statements are equivalent:
  \begin{enumerate}[leftmargin=*]
  \item The function $w$ solves~\eqref{eq:par} in the sense of
    Definition~\ref{def:par}.
  \item The function $w$ is a weak solution to~\eqref{eq:par}, i.e.,
    for all test functions $\phi \in \Cc2 (I \times \reali^n; \reali)$
    \begin{equation}
      \label{eq:Psol}
      \int_{t_o}^T \int_{\reali^n}
      (w \, \partial_t \phi + \mu \, w \, \Delta \phi + a \, w \, \phi) \,
      \, \d{x} \, \d{t}
      = 0
    \end{equation}
    and $w(t_o,x) = w_o (x)$.
  \end{enumerate}
\end{lemma}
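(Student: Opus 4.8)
The plan is to prove the equivalence in Lemma~\ref{lem:paraSol} by the classical Duhamel/distributional argument, treating $a\,w$ as a known source term in $\L\infty(I;\L1(\reali^n;\reali))$. Throughout, set $s(t,x)=a(t,x)\,w(t,x)$; since $a\in\L\infty$ and $w\in\C0(I;\L1)$, we have $s\in\C0(I;\L1(\reali^n;\reali))$ with $\norma{s(t)}_{\L1}\le\norma{a}_{\L\infty}\norma{w(t)}_{\L1}$, so all the integrals below converge and Fubini applies.

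\smallskip

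\emph{From Definition~\ref{def:par} to weak solution.} Assume $w$ satisfies~\eqref{eq:solNucleo}. The initial condition $w(t_o,\cdot)=w_o$ is immediate from $H_\mu(t)\to\delta_0$ in the sense of distributions as $t\to t_o^+$ and continuity of $w$ in $\L1$. For the weak formulation, fix $\phi\in\Cc2(I\times\reali^n;\reali)$. I would substitute the representation~\eqref{eq:solNucleo} into $\int_{t_o}^T\int_{\reali^n}w\,(\partial_t\phi+\mu\,\Delta\phi)\,\d x\,\d t$, then use Fubini and the two facts that (i) for $w_o\in\L1$ the function $(t,x)\mapsto(H_\mu(t)*w_o)(x)$ is a classical solution of the homogeneous heat equation on $]t_o,\infty[\times\reali^n$, smooth for $t>t_o$, and (ii) for fixed $\tau$ the function $(t,x)\mapsto(H_\mu(t-\tau)*s(\tau))(x)$ solves the homogeneous heat equation on $]\tau,\infty[\times\reali^n$. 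Integrating by parts in $t$ and $x$ against $\phi$ — the spatial decay of the heat kernel and the compact support of $\phi$ kill all boundary terms at spatial infinity, while the $t=T$ and $t=t_o$ boundary terms vanish because $\spt\phi\subset I\times\reali^n$ is compact (so $\phi$ vanishes near $t=T$) and the $t=t_o$ contribution of the homogeneous part reproduces exactly $-\int_{\reali^n}w_o\,\phi(t_o,\cdot)$, which would appear were $\phi$ not required to vanish there; with $\Cc2$ test functions one simply gets $0$. The Duhamel term, after interchanging the $t$ and $\tau$ integrations and using the semigroup property together with $H_\mu(0^+)=\delta_0$, contributes exactly $-\int_{t_o}^T\int_{\reali^n}s\,\phi\,\d x\,\d t=-\int_{t_o}^T\int_{\reali^n}a\,w\,\phi\,\d x\,\d t$, which is what~\eqref{eq:Psol} requires.

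\smallskip

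\emph{From weak solution to Definition~\ref{def:par}.} Assume $w\in\C0(I;\L1)$ is a weak solution with $w(t_o,\cdot)=w_o$. Define $\tilde w$ by the right-hand side of~\eqref{eq:solNucleo}; by the forward implication $\tilde w$ is also a weak solution with the same initial datum, so $r:=w-\tilde w\in\C0(I;\L1)$ is a weak solution of the homogeneous heat equation $\partial_t r-\mu\Delta r=0$ with $r(t_o,\cdot)=0$. It then suffices to show such $r$ vanishes identically. This is a uniqueness statement for the heat equation in the class $\C0(I;\L1)$ with the weak (distributional) formulation~\eqref{eq:Psol} with $a\equiv0$; the standard route is to test against $\phi(t,x)=\psi(x)\,(H_\mu(t^*-t)*\chi)$-type functions, or more cleanly to note that $t\mapsto(H_\mu(t^*-t)*r(t))$ is constant on $[t_o,t^*[$ for every $t^*\in I$ (differentiate in $t$ using the weak equation), hence equals its value at $t_o$, namely $H_\mu(t^*-t_o)*r(t_o)=0$, and let $t\uparrow t^*$. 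A fully rigorous version requires mollifying $r$ in space to justify this computation; this is where I expect the bulk of the technical care, and it is presumably why the authors cite~\cite[Lemma~2.4]{parahyp} rather than reproving it. Alternatively, one may simply invoke that lemma, since the only change here is that $a$ and $w_o$ are now genuinely $\L\infty$ resp.\ $\L1$, which the cited statement already covers.

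\smallskip

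The main obstacle is the uniqueness step: weak solutions of the heat equation in merely $\L1$ (without growth control a priori beyond $\C0(I;\L1)$) need an argument, and the cleanest one goes through the mollified backward-kernel trick above. Everything else — the Fubini interchanges, the boundary-term bookkeeping, the semigroup identity $H_\mu(t-\tau)*H_\mu(\tau-t_o)=H_\mu(t-t_o)$ — is routine given the $\L\infty$ bound on $a$ and the $\L1$-continuity of $w$.
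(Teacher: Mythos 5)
The paper itself contains no proof of this lemma: it is imported verbatim from \cite[Lemma~2.4]{parahyp}, so the only internal ``proof'' is that citation. Your route --- treat $s=a\,w$ as a known source, show via Duhamel and integration by parts that the representation~\eqref{eq:solNucleo} yields the weak formulation~\eqref{eq:Psol}, and conversely build $\tilde w$ from the given weak solution through the right-hand side of~\eqref{eq:solNucleo}, so that $r=w-\tilde w$ is a $\C0(I;\L1)$ distributional solution of the homogeneous heat equation with zero datum, killed by the backward-kernel duality argument --- is the standard argument behind the cited result, and your setup (forward step proved for a general given source) is exactly what makes the subtraction step legitimate. Two caveats, neither fatal: first, $s=a\,w$ is in general only in $\L\infty(I;\L1(\reali^n;\reali))$, not $\C0(I;\L1)$, since $a$ need not be continuous in time; boundedness and measurability are all your Fubini and limiting steps actually use, so just weaken that claim. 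Second, the uniqueness step is only sketched: the backward-kernel test function $(t,x)\mapsto\left(H_\mu(t^*+\epsilon-t)*\chi\right)(x)\,\theta(t)$ is not compactly supported in $x$, hence not admissible in~\eqref{eq:Psol} as written, so a rigorous version needs a spatial cutoff with error terms controlled by the uniform integrability of $\left\{r(t)\,:\,t\in[t_o,t^*]\right\}$ (available because $r\in\C0(I;\L1)$), or the mollification you mention. You flag this honestly, and deferring to \cite[Lemma~2.4]{parahyp} at that point is precisely what the authors themselves do.
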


\begin{proposition}[{\cite[Proposition~2.5]{parahyp}}]
  \label{prop:para}
  Fix $a \in \L\infty (I \times \reali^n; \reali)$. Then,
  \eqref{eq:par} generates the process
  \begin{displaymath}
    \begin{array}{ccccccc}
      \mathcal{P}
      & \colon
      & J
      & \times
      & \L1 (\reali^n; \reali)
      & \to
      & \L1 (\reali^n; \reali)
      \\
      &
      &(t_o,t)
      & ,
      & w_o
      & \to
      & w
    \end{array}
  \end{displaymath}
  with $w$ defined as in~\eqref{eq:solNucleo}, with the following
  properties, for a suitable $\mathcal{O} \in \Lloc\infty (I; \reali)$
  that depends only on norms of the map $a$ on $I\times \reali^n$.

  \begin{enumerate}[label=\bf{(P\arabic*)}]
    \setlength{\itemsep}{1pt} \setlength{\parskip}{1pt} \smallskip

  \item \label{it:P1} \textbf{$\boldsymbol{\mathcal{P}}$ is a
      Process:} $\mathcal{P}_{t,t} = \Id$ for all $t \in I$ and
    $\mathcal{P}_{t_2,t_3} \circ \mathcal{P}_{t_1,t_2} =
    \mathcal{P}_{t_1,t_3}$ for all $t_1, t_2, t_3 \in I$, with
    $t_1 \leq t_2 \leq t_3$.  \smallskip

  \item \label{it:P:timereg}\textbf{Regularity in time:} for all
    $w_o \!\in\! \L1 (\reali^n; \reali)$, the map
    $t \!\to\! \mathcal{P}_{t_o,t} w_o$ is in
    $\C0\!  \left(I; \L1 (\reali^n; \reali)\right)$, and, moreover,
    for every $\theta \in \left]0,1\right[$ and for all
    $\tau, \, t_1, \, t_2 \in I$, with $t_2 \geq t_1 \geq \tau > 0$,
    \begin{displaymath}
      \norma{
        \mathcal{P}_{t_o,t_2}w_o - \mathcal{P}_{t_o,t_1}w_o}_{\L1 (\reali^n)}
      \leq
      \norma{w_o}_{\L1 (\reali^n)}\,
      \left[
        \frac{n}{\tau-t_o} + \mathcal{O}(t_2)
      \right] \,
      \modulo{t_2-t_1}^\theta,
    \end{displaymath}

  \item \label{it:P:spacereg}\textbf{Regularity in space:} for all
    $t > t_o$, $w (t) \in \C\infty (\reali^n; \reali)$.

  \item \label{it:P4} \textbf{Regularity in $\boldsymbol{(t,x)}$:} if
    $w_o \in (\L1 \cap \C1) (\reali^n; \reali)$, then
    $(t,x) \to (\mathcal{P}_{t_o,t} w_o) (x) \in \C1 (I \times
    \reali^n; \reali)$.

  \item \label{it:P2} \textbf{$\L1$ continuous dependence on
      $\boldsymbol{w_o}$:} for all $t \in I$, the map
    $\mathcal{P}_{t_o,t} \colon \L1 (\reali^n; \reali) \to \L1
    (\reali^n; \reali)$ is linear and continuous, with
    $\norma{\mathcal{P}_{t_o,t} w_o}_{\L1 (\reali^n)} \leq \mathcal{O}
    (t) \, \norma{w_o}_{\L1 (\reali^n)}$.

  \item \label{it:P6} \textbf{Stability with respect to
      $\boldsymbol{a}$:} let
    $a_1, a_2 \in \L\infty (I \times \reali^n; \reali)$ with
    $a_1-a_2 \in \L1 (I \times \reali^n; \reali)$ and call
    $\mathcal{P}^1, \mathcal{P}^2$ the corresponding processes.  Then,
    for all $t \in I$ and for all
    $w_o \in (\L1\cap \L\infty) (\reali^n; \reali)$,
    \begin{displaymath}
      \norma{
        \mathcal{P}^1_{t_o,t} w_o - \mathcal{P}^2_{t_o,t} w_o}_{\L1 (\reali^n)}
      \leq
      \mathcal{O} (t) \,
      \norma{w_o}_{\L\infty (\reali^n)} \,
      \norma{a_1-a_2}_{\L1 ([t_o,t] \times \reali^n)} .
    \end{displaymath}

  \item \label{it:P3} \textbf{$\L\infty$-estimate:} for all
    $w_o \in (\L1\cap \L\infty) (\reali^n; \reali)$, for all
    $t \in I$,
    $\norma{\mathcal{P}_{t_o,t}w_o}_{\L\infty (\reali^n)} \leq
    \mathcal{O} (t) \, \norma{w_o}_{\L\infty (\reali^n)}$.

  \item\label{item:tvw} \textbf{$\W11$-estimate:} for all
    $w_o \in \L1 (\reali^n; \reali)$, for all $t \in I$ with $t>t_o$,
    \begin{eqnarray*}
      \norma{\nabla (\mathcal{P}_{t_o,t}w_o)}_{\L1 (\reali^n; \reali^n)}
      & \leq
      & \frac{J_n}{\sqrt{\mu \, (t - t_o) }} \, \norma{w_o}_{\L1 (\reali^n)}
      \\
      &
      & \times \left(1 + 2 \, (t-t_o) \, \norma{a}_{\L\infty ([t_o,t] \times \reali^n)}
        e^{\int_{t_o}^t\norma{a (\tau)}_{\L\infty (\reali^n)} \d\tau}\right),
    \end{eqnarray*}
    where $J_n = \frac{\Gamma ((n+1)/2)}{\Gamma (n/2)}$ and $\Gamma$
    is the Gamma function.
  \end{enumerate}
\end{proposition}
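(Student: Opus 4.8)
The plan is to realise the process $\mathcal{P}$ by a Banach fixed point argument on the Duhamel representation~\eqref{eq:solNucleo}, and then to deduce every property~\ref{it:P1}--\ref{item:tvw} from that same representation, using only Gr\"onwall's Lemma together with the unit mass $\norma{H_\mu (s)}_{\L1 (\reali^n)} = 1$ and the explicit $\L1$ sizes of $\nabla H_\mu (s)$ and $\partial_s H_\mu (s)$. Fix a bounded $[t_o, T'] \subseteq I$ and, on $\C0 ([t_o,T']; \L1 (\reali^n;\reali))$, let $\mathcal{T}$ map $w$ to the right hand side of~\eqref{eq:solNucleo}; this is well defined and continuous in time. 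Young's inequality and $\norma{H_\mu (s)}_{\L1} = 1$ give $\norma{(\mathcal{T}w_1) (t) - (\mathcal{T}w_2) (t)}_{\L1 (\reali^n)} \leq \int_{t_o}^t \norma{a (\tau)}_{\L\infty (\reali^n)}\,\norma{w_1 (\tau) - w_2 (\tau)}_{\L1 (\reali^n)}\,\d{\tau}$, so $\mathcal{T}$ is a contraction on $[t_o, t_o+\delta]$ for $\delta$ depending only on $\norma{a}_{\L\infty (I\times\reali^n)}$; concatenating the local fixed points and letting $T'\uparrow T$ yields a global $w \in \C0 (I; \L1 (\reali^n;\reali))$, and we set $\mathcal{P}_{t_o,t}w_o = w (t)$, the $\L1$-continuity up to $t = t_o$ being the strong continuity of the heat semigroup. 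The displayed inequality with $w_1,w_2$ two solutions, plus Gr\"onwall, gives uniqueness, and then~\ref{it:P1} follows from uniqueness together with $H_\mu (s_1)*H_\mu (s_2) = H_\mu (s_1+s_2)$.

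\textbf{The $\L1$/$\L\infty$/$\W11$ bounds and stability in $a$.} Taking $\L1$, resp.\ $\L\infty$, norms in~\eqref{eq:solNucleo} and using $\norma{H_\mu (s)*\phi}_{\L p} \leq \norma{\phi}_{\L p}$ for $p=1,\infty$ gives $\norma{w (t)}_{\L p} \leq \norma{w_o}_{\L p} + \int_{t_o}^t \norma{a (\tau)}_{\L\infty (\reali^n)}\norma{w (\tau)}_{\L p}\d{\tau}$, whence by Gr\"onwall $\norma{w (t)}_{\L p} \leq \norma{w_o}_{\L p}\,\mathcal{O} (t)$ with $\mathcal{O} (t) = \exp\int_{t_o}^t \norma{a (\tau)}_{\L\infty (\reali^n)}\d{\tau}$: this is~\ref{it:P2} (linearity being evident) and~\ref{it:P3}, while the $\C0 (I;\L1)$ part of~\ref{it:P:timereg} is part of the construction. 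For~\ref{item:tvw}, differentiate~\eqref{eq:solNucleo} for $t>t_o$ moving $\nabla$ onto the kernel, $\nabla w (t) = (\nabla H_\mu (t-t_o))*w_o + \int_{t_o}^t (\nabla H_\mu (t-\tau))*(a (\tau) w (\tau))\d{\tau}$; since $\nabla H_\mu (s,x) = -\tfrac{x}{2\mu s} H_\mu (s,x)$ one has $\norma{\nabla H_\mu (s)}_{\L1 (\reali^n;\reali^n)} = \tfrac{1}{2\mu s}\int_{\reali^n}\norma{x}H_\mu (s,x)\d{x} = J_n/\sqrt{\mu\,s}$, the last step being the mean of $\norma{\cdot}$ under the Gaussian $H_\mu (s)$. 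Inserting the $\L1$ bound already obtained together with $\int_{t_o}^t (t-\tau)^{-1/2}\d{\tau} = 2\sqrt{t-t_o}$ produces exactly the stated inequality; no circularity occurs, since only $\norma{w (\tau)}_{\L1}$, not $\norma{\nabla w (\tau)}_{\L1}$, enters. Finally~\ref{it:P6}: the difference $z (t) = \mathcal{P}^1_{t_o,t}w_o - \mathcal{P}^2_{t_o,t}w_o$ satisfies~\eqref{eq:solNucleo} with zero datum and source $(a_1-a_2)\,\mathcal{P}^1_{t_o,\cdot}w_o + a_2\,z$; bounding $\norma{(a_1 (\tau)-a_2 (\tau))\,\mathcal{P}^1_{t_o,\tau}w_o}_{\L1} \leq \mathcal{O} (t)\,\norma{w_o}_{\L\infty (\reali^n)}\,\norma{(a_1-a_2) (\tau)}_{\L1 (\reali^n)}$ via~\ref{it:P3} and applying Gr\"onwall gives the claim.

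\textbf{H\"older regularity in time and space regularity.} For $t_2 \geq t_1 \geq \tau > t_o$ split $w (t_2)-w (t_1)$ into the homogeneous difference $(H_\mu (t_2-t_o) - H_\mu (t_1-t_o))*w_o$, the ``far'' piece $\int_{t_o}^{t_1}(H_\mu (t_2-s) - H_\mu (t_1-s))*(a (s) w (s))\d{s}$, and the ``near'' piece $\int_{t_1}^{t_2}H_\mu (t_2-s)*(a (s) w (s))\d{s}$, the last having $\L1$ norm bounded by a locally bounded multiple of $t_2-t_1$, hence of $\modulo{t_2-t_1}^\theta$. Writing $H_\mu (t_2-\sigma) - H_\mu (t_1-\sigma) = \int_{t_1-\sigma}^{t_2-\sigma}\partial_s H_\mu (s)\d{s}$ and using $\norma{\partial_s H_\mu (s)}_{\L1 (\reali^n)} \leq n/s$ — which follows from $\partial_s H_\mu (s,x) = H_\mu (s,x)\left(\norma{x}^2/(4\mu s^2) - n/(2s)\right)$ — one bounds the homogeneous difference by $\norma{w_o}_{\L1 (\reali^n)}\,n\ln\!\left(1 + \tfrac{t_2-t_1}{\tau-t_o}\right)$ and the far piece by $\int_{t_o}^{t_1}n\ln\tfrac{t_2-s}{t_1-s}\,\norma{a (s)}_{\L\infty (\reali^n)}\norma{w (s)}_{\L1 (\reali^n)}\d{s}$; here $\int_{t_o}^{t_1}\ln\tfrac{t_2-s}{t_1-s}\d{s} = (\ell+h)\ln (\ell+h) - \ell\ln\ell - h\ln h$ with $h=t_2-t_1$, $\ell = t_1-t_o$, which behaves like $h\ln (e\ell/h)$ as $h\to 0$ and is therefore $\leq C_\theta\,h^\theta$ for every $\theta \in \left]0,1\right[$. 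Collecting the three pieces, using $\ln (1+z)\leq z$ in the first, and absorbing all $\tau$-independent factors into $\mathcal{O} (t_2)$ yields the stated H\"older estimate. For~\ref{it:P:spacereg} and~\ref{it:P4}: $H_\mu (t-t_o)*w_o \in \C\infty (\reali^n;\reali)$ for $t>t_o$, while in the Duhamel integral $\norma{\nabla^k H_\mu (s)}_{\L1 (\reali^n)} \leq C_{k,n}\,s^{-k/2}$ is integrable at $s=0$ only for $k\leq 1$, so for $k\geq 2$ one splits the integral at $(t+t_o)/2$ and bootstraps the regularity already available on $[(t+t_o)/2,t]$, exactly as in~\cite{parahyp}; when $w_o \in (\L1\cap\C1) (\reali^n;\reali)$ the same formula, differentiated directly, matches the datum as $t\downarrow t_o$ and gives~\ref{it:P4}.

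\textbf{Main obstacle.} The only genuinely delicate step is the H\"older estimate in~\ref{it:P:timereg}: the near-diagonal singularity $s\to t$ in the Duhamel integral, together with the elementary but crucial bound $h\ln (1/h) \leq C_\theta h^\theta$, is what forces the exponent $\theta$ to be strictly below $1$ and produces the $n/(\tau-t_o)$ blow-up as $\tau\downarrow t_o$. The $\C\infty$ smoothing~\ref{it:P:spacereg} is the second point demanding care, precisely because $\norma{\nabla^k H_\mu (s)}_{\L1} \sim s^{-k/2}$ fails to be integrable at $s=0$ for $k\geq 2$, so one cannot put all derivatives on the kernel at once and must iterate the Duhamel representation. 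Everything else reduces to Gr\"onwall's Lemma and the mass and derivative bounds on $H_\mu$ recorded above.
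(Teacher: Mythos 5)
The paper itself offers no proof of this proposition: it is quoted, statement and constants included, from \cite[Proposition~2.5]{parahyp}, so there is no internal argument to measure yours against line by line. Your reconstruction does follow the route the paper relies on wherever it argues about~\eqref{eq:par}: the mild formulation~\eqref{eq:solNucleo}, $\norma{H_\mu (s)}_{\L1 (\reali^n;\reali)}=1$, the kernel bounds $\norma{\nabla H_\mu (s)}_{\L1 (\reali^n;\reali^n)}=J_n/\sqrt{\mu\,s}$ and $\norma{\partial_s H_\mu (s)}_{\L1 (\reali^n;\reali)}\le n/s$, and Gronwall's lemma; indeed the proofs of Proposition~\ref{prop:PBV} and Corollary~\ref{cor:para} in this paper reproduce your computations for the $\W11$ bound and for the stability in $a$ almost verbatim. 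Your treatment of \textbf{(P1)}, \textbf{(P2)}, \textbf{(P5)}, \textbf{(P6)}, \textbf{(P7)}, \textbf{(P8)} is sound: the contraction--concatenation construction, uniqueness via Gronwall, the semigroup property of $H_\mu$ for the process identity, the exact constant in \textbf{(P8)}, and the decomposition of $\mathcal{P}^1_{t_o,t}w_o-\mathcal{P}^2_{t_o,t}w_o$ are all correct. The only quibble in \textbf{(P2)} is that the elementary bound $h\ln (e\ell/h)\le C_\theta h^\theta$ makes your constants $\theta$-dependent, so you recover the displayed inequality only up to a factor depending on $\theta$; this is harmless for the way the proposition is used, but it is not literally the stated constant $n/(\tau-t_o)+\mathcal{O} (t_2)$.

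The one genuine gap is in \textbf{(P3)} and \textbf{(P4)}. The bootstrap you sketch for the $\C\infty$ smoothing (split the Duhamel integral near $s=t$ and iterate, restarting the representation at intermediate times) does not run as described: since $a$ is merely $\L\infty$ in $x$, the source $a (\tau)\,w (\tau)$ gains no spatial regularity however smooth $w (\tau)$ already is, so at every iteration at most one derivative can be placed on the kernel (the $s^{-k/2}$ singularity of $\norma{\nabla^k H_\mu (s)}_{\L1}$ is integrable only for $k\le 1$), and restarting the Duhamel formula at a later time does not change this. Consequently neither the claimed $\C\infty$ regularity in $x$ nor the joint $\C1$ regularity of \textbf{(P4)} (whose time derivative $\mu\,\Delta w + a\,w$ involves the rough coefficient $a$ directly) follows from the argument you outline; here the phrase ``exactly as in~\cite{parahyp}'' is carrying all the weight, which is not available in a self-contained proof. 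Everything else stands on its own.
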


\noindent The latter estimate above and~\ref{it:P:spacereg} provide a
$\BV$ bound on the solution $\mathcal{P}_{t_o,t}w_o$ for $t>t_o$.

In the sequel, we need the following strengthened version
of~\ref{item:tvw}.

\begin{proposition}
  \label{prop:PBV}
  Let $a \in \L\infty (I \times \reali^n; \reali)$ and assume
  $w_o \in (\L1 \cap \L\infty \cap \BV) (\reali^n; \reali)$. Call $w$
  the solution to~\eqref{eq:par}. Then, for all $t \in I$,
  $w (t) \in \BV (\reali^n; \reali)$ and the following estimate holds:
  \begin{equation}
    \label{eq:31}
    \tv \left(w (t)\right)
    \leq
    \tv (w_o)
    +
    \frac{2\,J_n}{\sqrt{\mu}} \; \mathcal{O} (t) \;
    \norma{a}_{\L\infty ([t_o,t] \times \reali^n)} \;
    \norma{w_o}_{\L1 (\reali^n)},
  \end{equation}
  where $J_n = \frac{\Gamma ((n+1)/2)}{\Gamma (n/2)}$ and $\Gamma$ is
  the Gamma function.
\end{proposition}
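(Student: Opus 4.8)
The plan is to get a bound on $\tv(w(t))$ that is \emph{uniform in $t$ near $t_o$}, which the estimate~\ref{item:tvw} fails to provide because of the $1/\sqrt{\mu(t-t_o)}$ blow-up. The starting point is the Duhamel representation~\eqref{eq:solNucleo}:
\begin{displaymath}
  w(t) = H_\mu(t-t_o) * w_o + \int_{t_o}^t H_\mu(t-\tau) * \bigl(a(\tau)\,w(\tau)\bigr)\,\d\tau \,.
\end{displaymath}
I would estimate the total variation of the two terms separately. For the first term, since convolution with the probability kernel $H_\mu(t-t_o)$ does not increase total variation (it is an $\L1$ contraction that commutes with translations, hence with $\nabla$), one gets $\tv\bigl(H_\mu(t-t_o)*w_o\bigr) \le \tv(w_o)$; this is where the hypothesis $w_o \in \BV$ is used and why no singular factor appears. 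For the integral term, I would move one derivative onto the heat kernel and use Young's inequality: for each fixed $\tau < t$,
\begin{displaymath}
  \norma{\nabla\bigl(H_\mu(t-\tau)*(a(\tau)w(\tau))\bigr)}_{\L1(\reali^n)}
  \le \norma{\nabla H_\mu(t-\tau)}_{\L1(\reali^n;\reali^n)}\;\norma{a(\tau)w(\tau)}_{\L1(\reali^n)}\,.
\end{displaymath}
The explicit computation $\norma{\nabla H_\mu(s)}_{\L1(\reali^n;\reali^n)} = J_n/\sqrt{\mu\,s}$ (already used implicitly in~\ref{item:tvw}, with $J_n=\Gamma((n+1)/2)/\Gamma(n/2)$) together with $\norma{a(\tau)w(\tau)}_{\L1} \le \norma{a}_{\L\infty([t_o,t]\times\reali^n)}\,\norma{w(\tau)}_{\L1(\reali^n)}$ and the a~priori $\L1$ bound $\norma{w(\tau)}_{\L1(\reali^n)} \le \mathcal{O}(t)\,\norma{w_o}_{\L1(\reali^n)}$ from~\ref{it:P2} gives an integrand bounded by
\begin{displaymath}
  \frac{J_n}{\sqrt{\mu\,(t-\tau)}}\;\norma{a}_{\L\infty([t_o,t]\times\reali^n)}\;\mathcal{O}(t)\,\norma{w_o}_{\L1(\reali^n)}\,.
\end{displaymath}
The key point is that the singularity $1/\sqrt{t-\tau}$ is \emph{integrable in $\tau$ on $[t_o,t]$}: $\int_{t_o}^t (t-\tau)^{-1/2}\,\d\tau = 2\sqrt{t-t_o}$. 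Collecting terms, $\tv(w(t)) \le \tv(w_o) + \frac{2J_n}{\sqrt{\mu}}\sqrt{t-t_o}\;\mathcal{O}(t)\,\norma{a}_{\L\infty([t_o,t]\times\reali^n)}\,\norma{w_o}_{\L1(\reali^n)}$, and absorbing $\sqrt{t-t_o}$ into a (possibly enlarged) locally bounded $\mathcal{O}(t)$ yields~\eqref{eq:31}. (In fact one even obtains the slightly sharper statement with the literal $\sqrt{t-t_o}$ factor.)

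The one technical subtlety — and the step I expect to need the most care — is justifying that $\tv$ of the Duhamel integral is controlled by the integral of the $\tv$ of the integrand, i.e. commuting $\nabla$ with $\int_{t_o}^t \d\tau$ in the distributional sense. Since for $t>t_o$ the map $\tau \mapsto H_\mu(t-\tau)*(a(\tau)w(\tau))$ takes values in $\C\infty \cap \W11$ for $\tau<t$ with the $\W11$-norm bound above being integrable up to $\tau=t$, the integral is a genuine Bochner integral in $\W11(\reali^n;\reali)$, so $\nabla$ passes inside and $\norma{\nabla(\cdot)}_{\L1}$ is subadditive under the integral; combined with~\ref{it:P:spacereg} (smoothness of $w(t)$ for $t>t_o$), this identifies $\tv(w(t))$ with $\norma{\nabla w(t)}_{\L1(\reali^n;\reali^n)}$ and legitimises the splitting. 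For $t=t_o$ the bound is just $\tv(w_o)$, trivially consistent with~\eqref{eq:31}. An alternative, avoiding Bochner-integral bookkeeping, is to run the same estimate on a smooth approximating sequence $w_o^h \to w_o$ as in Lemma~\ref{lem:approx2} (so $\tv(w_o^h)\to\tv(w_o)$), use~\ref{it:P2} and lower semicontinuity of $\tv$ under $\L1$ convergence to pass to the limit; I would present whichever is shorter, most likely the direct Young-inequality argument with a one-line remark on the Bochner integral.
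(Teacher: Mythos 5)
Your proposal is correct and follows essentially the same route as the paper: split the Duhamel formula~\eqref{eq:solNucleo}, keep the full gradient off the initial-data term, put $\nabla$ on the heat kernel in the integral term, use $\norma{\nabla H_\mu(s)}_{\L1}=J_n/\sqrt{\mu s}$ together with the a priori $\L1$ bound on $w(\tau)$, and integrate the $(t-\tau)^{-1/2}$ singularity to get the $2J_n\sqrt{t-t_o}/\sqrt{\mu}$ factor. The ``alternative'' you sketch at the end --- regularising $w_o$ via Lemma~\ref{lem:approx2}, showing $w_h\to w$ in $\L1$ by Gronwall, and concluding by lower semicontinuity of the total variation --- is precisely how the paper makes the argument rigorous, so no gap remains.
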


\begin{proof}
  Approximate $w_o$ by means of a sequence $w_o^h$ as defined in
  Lemma~\ref{lem:approx2}. Define $w_h$ through~\eqref{eq:solNucleo}
  by
  \begin{equation}
    \label{eq:30}
    w_h (t,x)
    =
    \left( H_\mu (t) * w^h_o \right) (x)
    +
    \int_{t_0}^t
    \left( H_\mu (t-\tau) * \left(a (\tau) \, w_h (\tau)\right) \right)
    (x) \, \d\tau .
  \end{equation}
  Let $w$ be defined by~\eqref{eq:solNucleo} and compute
  \begin{displaymath}
    \norma{w_h (t) - w (t)}_{\L1 (\reali^n)}
    \leq
    \norma{w_o^h - w_o}_{\L1 (\reali^n)}
    + \int_{t_o}^t\norma{a (\tau)}_{\L\infty (\reali^n)}
    \norma{w_h (\tau) - w (\tau)}_{\L1 (\reali^n)} \d\tau .
  \end{displaymath}
  An application of Gronwall Lemma~\cite[Chapter~I, 1.III]{Walter1970}
  yields
  \begin{displaymath}
    \norma{w_h (t) - w (t)}_{\L1 (\reali^n)}
    \leq
    \norma{w_o^h - w_o}_{\L1 (\reali^n)}
    \exp\left(
      \int_{t_o}^t\norma{a (\tau)}_{\L\infty (\reali^n)} \d\tau
    \right).
  \end{displaymath}
  Thus, as $h$ goes to $+\infty$, $w_h (t)$ converges to $w (t)$ in
  $\L1 (\reali^n; \reali)$ for a.e.~$t\in I$.

  It follows immediately from~\eqref{eq:30} and from the regularity of
  the heat kernel $H_\mu$ that
  $w_h (t) \in \C\infty (\reali^n; \reali)$ for a.e.~$t \in
  I$. Moreover,
  \begin{displaymath}
    \nabla w_h (t,x)
    =
    (H_\mu (t) * \nabla w_o^h) (x)
    +
    \int_{t_o}^t
    \nabla H_\mu (t-\tau) * \left(a (\tau) \, w_h (\tau)\right) (x)
    \d\tau,
  \end{displaymath}
  so that, using the properties of the heat kernel $H_\mu$
  and~\ref{it:P2} in Proposition~\ref{prop:para}, we obtain
  \begin{align*}
    \norma{\nabla w_h (t)}_{\L1 (\reali^n; \reali^n)}
    \leq \
    & \norma{\nabla w_o^h}_{\L1 (\reali^n; \reali^n)}
    \\
    & +
      \int_{t_o}^t
      \norma{\nabla H_\mu (t-\tau)}_{\L1 (\reali^n;\reali^n)}
      \norma{a (\tau)}_{\L\infty (\reali^n)}
      \norma{w_h (\tau)}_{\L1 (\reali^n)} \d\tau
    \\
    \leq \
    & \norma{\nabla w_o^h}_{\L1 (\reali^n; \reali^n)}
      +
      \mathcal{O} (t) \,
      \norma{a}_{\L\infty ([t_o,t] \times \reali^n)} \,
      \norma{w_o}_{\L1 (\reali^n)}
      \int_{t_o}^t
      \frac{J_n}{\sqrt{\mu (t-\tau)}} \d\tau
    \\
    \leq \
    &\norma{\nabla w_o^h}_{\L1 (\reali^n; \reali^n)}
      +
      \mathcal{O} (t) \,
      \norma{a}_{\L\infty ([t_o,t] \times \reali^n)} \,
      \norma{w_o}_{\L1 (\reali^n)} \,
      \frac{2\,J_n}{\sqrt{\mu}} \, \sqrt{t-t_o} \,.
  \end{align*}
  Let now $h \to +\infty$: Lemma~\ref{lem:approx2} and the lower
  semicontinuity of the total variation imply that:
  \begin{align*}
    \tv \left(w (t)\right)
    \leq \
    & \lim_{h \to +\infty} \tv (w_h (t))
      =
      \lim_{h\to+\infty} \norma{\nabla w_h (t)}_{\L1 (\reali^n; \reali^n)}
    \\
    \leq \
    &  \tv (w_o)
      +
      \mathcal{O} (t) \,
      \norma{w_o}_{\L1 (\reali^n)} \,
      \norma{a}_{\L\infty ([t_o,t] \times \reali^n)}
      \frac{2\,J_n}{\sqrt{\mu}} \, \sqrt{t-t_o},
  \end{align*}
  completing the proof.
\end{proof}

We need the following improvements of the estimates in
propositions~\ref{prop:para} and~\ref{prop:PBV} that hold in the case
of positive initial data.

\begin{corollary}
  \label{cor:para}
  Let $a \in \L\infty (I \times \reali^n; \reali)$,
  $w_o \in \L1 (\reali^N; \reali)$ with $w_o \geq 0$. Then,
  \begin{enumerate}[label=\bf{(P\arabic*)},start=9]

    \setlength{\itemsep}{1pt} \setlength{\parskip}{1pt}

  \item \label{it:P:posi} \textbf{Positivity:}
    $\mathcal{P}_{t_o,t} \, w_o \geq 0$ for all $t \in I$.

  \item \label{it:P:priori} \textbf{A priori estimates:} assume that
    $w_o \in (\L1 \cap \L\infty) (\reali^n; \reali)$ and set, for all
    $t \in I$, $A (t) = \sup_{\xi \in \reali^n} a (t,\xi)$. Then,
    \begin{equation}
      \label{eq:12}
      \begin{aligned}
        \norma{\mathcal{P}_{t_o,t} \, w_o}_{\L1 (\reali^n)} \leq \ &
        \norma{w_o}_{\L1 (\reali^n)} \, \exp \int_{t_o}^t A (\tau)
        \d\tau \,,
        \\
        \norma{\mathcal{P}_{t_o,t} \, w_o}_{\L\infty (\reali^n)} \leq
        \ & \norma{w_o}_{\L\infty (\reali^n)} \, \exp \int_{t_o}^t A
        (\tau) \d\tau \,.
      \end{aligned}
    \end{equation}

  \item\label{it:P:stab} \textbf{Stability with respect to
      $\boldsymbol{a}$:} let
    $a_1, a_2 \in \L\infty (I \times \reali^n; \reali)$ with
    $a_1-a_2 \in \L1 (I \times \reali^n; \reali)$ and call
    $\mathcal{P}^1, \mathcal{P}^2$ the corresponding processes.  Then,
    for all $t \in I$ and for all
    $w_o \in (\L1\cap \L\infty) (\reali^n; \reali)$,
    \begin{equation}
      \label{eq:13}
      \begin{aligned}
        & \norma{\mathcal{P}^1_{t_o,t} w_o- \mathcal{P}^2_{t_o,t}
          w_o}_{\L1 (\reali^n)}
        \\
        \leq \ & \norma{w_o}_{\L\infty (\reali^n)} \, e^{\int_{t_o}^t
          \left[\norma{a_1 (\tau)}_{\L\infty (\reali^n)} + \norma{a_2
              (\tau)}_{\L\infty (\reali^n)}\right]\d\tau} \,
        \norma{a_1 - a_2}_{\L1 ([t_o,t]\times\reali^n)} \,.
      \end{aligned}
    \end{equation}

  \item \label{item:tvwPOS} \textbf{$\BV$ estimate:} if
    $w_o \in (\L1 \cap \L\infty \cap \BV) (\reali^n; \reali)$, define
    $A (t) = \sup_{x \in \reali^n} a (t,x)$, then
    \begin{equation}
      \label{eq:32}
      \tv \left(\mathcal{P}_{t_o,t} \, w_o\right)
      \leq
      \tv (w_o)
      +
      \frac{2\,J_n}{\sqrt{\mu}} \, \sqrt{t-t_o} \,
      \norma{a}_{\L\infty ([t_o,t] \times \reali^n)}
      \norma{w_o}_{\L1 (\reali^n)} \,
      e^{\int_{t_o}^t A (\tau) \d\tau},
    \end{equation}
    where $J_n = \frac{\Gamma ((n+1)/2)}{\Gamma (n/2)}$ and $\Gamma$
    is the Gamma function.
  \end{enumerate}
\end{corollary}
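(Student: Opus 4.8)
The plan is to prove the four items in the order \ref{it:P:posi}, \ref{it:P:priori}, \ref{it:P:stab}, \ref{item:tvwPOS}, since positivity is precisely what lets one replace, in the \emph{a priori} bounds, $\norma{a (t)}_{\L\infty (\reali^n)}$ by the possibly smaller $\sup_{x \in \reali^n} a (t,x)$, and these sharpened bounds are what upgrade \ref{it:P6} and~\eqref{eq:31} to~\eqref{eq:13} and~\eqref{eq:32}.

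For \ref{it:P:posi} I would set $\lambda = \norma{a}_{\L\infty (I \times \reali^n)}$ and rewrite the equation in~\eqref{eq:par} as $\partial_t w - \mu \, \Delta w + \lambda \, w = (a + \lambda) \, w$, whose underlying semigroup $s \mapsto e^{-\lambda \, s} \, H_\mu (s) *$ is positivity preserving since $H_\mu (s) \geq 0$. Arguing as in Lemma~\ref{lem:paraSol}, the solution $w$ to~\eqref{eq:par} is the unique fixed point of the self-map
\begin{displaymath}
  (\Phi \, w) (t)
  =
  e^{-\lambda \, (t - t_o)} \, H_\mu (t - t_o) * w_o
  +
  \int_{t_o}^t e^{-\lambda \, (t - \tau)} \,
  \left( H_\mu (t - \tau) * \left( \left(a (\tau) + \lambda\right) w (\tau) \right) \right) \d\tau
\end{displaymath}
on $\C0 (I; \L1 (\reali^n; \reali))$ endowed with the equivalent norm $\sup_{t \in I} e^{-L \, (t - t_o)} \, \norma{w (t)}_{\L1 (\reali^n)}$, which for $L$ large (depending only on $\lambda$) makes $\Phi$ a contraction. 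Since $e^{-\lambda \, s} > 0$, $H_\mu (s) \geq 0$ and $a + \lambda \geq 0$, the closed cone $\left\{w \colon w (t) \geq 0 \mbox{ for all } t \in I\right\}$ is invariant under $\Phi$; applying the contraction principle on this cone forces $w \geq 0$.

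With \ref{it:P:posi} in hand, the remaining items are essentially bookkeeping. Writing $A (t) = \sup_{x \in \reali^n} a (t,x)$ and $E (t) = \exp \int_{t_o}^t A (\tau) \d\tau$: integrating~\eqref{eq:solNucleo} in $x$ (using $\norma{H_\mu (s)}_{\L1 (\reali^n)} = 1$) together with $w \geq 0$ shows that $m (t) = \norma{w (t)}_{\L1 (\reali^n)} = \int_{\reali^n} w (t,x) \d{x}$ is absolutely continuous with $m' (t) = \int_{\reali^n} a (t,x) \, w (t,x) \d{x} \leq A (t) \, m (t)$ a.e., whence the integrating factor $e^{-\int_{t_o}^t A}$ gives the first line of~\eqref{eq:12}; for the second, pass to $z = w / E$, which is nonnegative, lies in $\C0 (I; \L1 (\reali^n; \reali))$, equals $w_o$ at $t_o$ and solves $\partial_t z - \mu \, \Delta z = (a - A) \, z$ with $a - A \leq 0$, so that~\eqref{eq:solNucleo} for $z$ reads $0 \leq z (t,x) = \left( H_\mu (t - t_o) * w_o \right) (x) + (\mbox{a term} \leq 0) \leq \norma{w_o}_{\L\infty (\reali^n)}$, and multiplying back by $E (t)$ closes~\eqref{eq:12}. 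For \ref{it:P:stab}, with $w_i = \mathcal{P}^i_{t_o,t} w_o$ the function $d = w_1 - w_2$ solves $\partial_t d - \mu \, \Delta d = a_1 \, d + (a_1 - a_2) \, w_2$ with $d (t_o) = 0$; the corresponding Duhamel identity, the bound $\norma{w_2 (\tau)}_{\L\infty (\reali^n)} \leq \norma{w_o}_{\L\infty (\reali^n)} \, e^{\int_{t_o}^t \norma{a_2 (\sigma)}_{\L\infty (\reali^n)} \d\sigma}$ (from~\eqref{eq:solNucleo} and Gronwall, the exponent being legitimately raised to $t$ because its integrand is nonnegative), and one more application of Gronwall's Lemma with kernel $\norma{a_1 (\tau)}_{\L\infty (\reali^n)}$ then yield~\eqref{eq:13}.

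For \ref{item:tvwPOS} I would rerun the proof of Proposition~\ref{prop:PBV}, but approximating $w_o$ by a \emph{nonnegative} smooth sequence $w_o^h$ (mollification) with $\norma{w_o^h}_{\L1 (\reali^n)} \leq \norma{w_o}_{\L1 (\reali^n)}$ and $\norma{\nabla w_o^h}_{\L1 (\reali^n)} \leq \tv (w_o)$, so that by \ref{it:P:posi} the solutions $w_h$ are nonnegative; the sole change is that the factor $\norma{w_h (\tau)}_{\L1 (\reali^n)}$ in the gradient estimate is now controlled, through~\eqref{eq:12}, by $\norma{w_o}_{\L1 (\reali^n)} \, e^{\int_{t_o}^t A (\sigma) \d\sigma}$, which combined with $\int_{t_o}^t (t - \tau)^{-1/2} \d\tau = 2 \, \sqrt{t - t_o}$ and the lower semicontinuity of the total variation along $w_h (t) \to w (t)$ in $\L1 (\reali^n; \reali)$ produces~\eqref{eq:32}. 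I expect the one genuinely delicate point to be \ref{it:P:posi}, namely setting up the shifted, positivity-preserving integral formulation and checking (along the lines of Lemma~\ref{lem:paraSol}) that it characterises the same solution; once that is in place, items \ref{it:P:priori}--\ref{item:tvwPOS} are the arguments of Propositions~\ref{prop:para} and~\ref{prop:PBV} with $\sup_{x \in \reali^n} a (t,x)$ substituted for $\norma{a (t)}_{\L\infty (\reali^n)}$ wherever possible.
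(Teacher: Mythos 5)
Your proof is correct, but in two places it takes a genuinely different route from the paper. For \ref{it:P:posi} the paper does not argue at all: positivity is imported from \cite{parahyp} (ultimately a parabolic maximum principle from Friedman's book), whereas you build it from scratch by shifting the equation by $\lambda=\norma{a}_{\L\infty}$ and running a Picard/contraction argument in which the positivity-preserving kernel $e^{-\lambda s}H_\mu(s)$ leaves the cone of nonnegative functions invariant. This is self-contained and elementary, at the price of the equivalence check you yourself flag: that the shifted mild formulation singles out the same function as~\eqref{eq:solNucleo}; this is indeed routine (either through the weak formulation as in Lemma~\ref{lem:paraSol}, or directly via the semigroup property of $H_\mu$ and Fubini), so it is not a gap, but it should be written out. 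The same remark applies to your $\L\infty$ bound in \ref{it:P:priori}: the paper simply estimates $a\,w\le A\,w$ under the Duhamel integral (using positivity) and invokes Gronwall, while you pass to $z=w\exp(-\int_{t_o}^t A)$ and exploit the sign of $a-A$; the identity satisfied by $z$ again needs the small semigroup/Fubini verification, and in exchange your argument cleanly yields the stated exponent $\exp\int_{t_o}^t A$ even when $A$ changes sign, where the paper's one-line Gronwall is terser. Your \ref{it:P:stab} is the paper's proof with the mirrored splitting $a_1w_1-a_2w_2=a_1(w_1-w_2)+(a_1-a_2)w_2$ instead of $(a_1-a_2)w_1+a_2(w_1-w_2)$, giving the same symmetric bound~\eqref{eq:13}; and your \ref{item:tvwPOS} is exactly the paper's (rerun Proposition~\ref{prop:PBV} with the improved $\L1$ bound from~\eqref{eq:12}), with the useful extra care of choosing \emph{nonnegative} smooth approximants of $w_o$ so that \ref{it:P:posi} and~\eqref{eq:12} genuinely apply to the approximate solutions $w_h$ — a detail the paper's one-sentence proof leaves implicit.
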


\begin{proof}
  The positivity~\ref{it:P:posi} follows from~\cite[Point~6.~in
  Proposition~2.5]{parahyp}, based on~\cite[Chapter~2, Section~4,
  Theorem~9]{FriedmanBook}.

  Starting now from~\eqref{eq:solNucleo}, we have
  \begin{align*}
    w (t,x)
    = \
    & \left( H_\mu (t) * w_o\right) (x)
      +
      \int_{t_o}^t \int_{\reali^n}
      H_\mu (t-\tau, x-\xi) \, a (\tau,\xi) \, w (\tau,\xi) \d{\xi} \d{\tau}
    \\
    \leq \
    & \left( H_\mu (t) * w_o\right) (x)
      +
      \int_{t_o}^t A (\tau) \int_{\reali^n}
      H_\mu (t-\tau, x-\xi) \, w (\tau,\xi) \d{\xi} \d{\tau}  \,.
  \end{align*}
  In both cases of the $\L1$ and $\L\infty$ estimate, an application
  of Gronwall Lemma~\cite[Chapter~I, 1.III]{Walter1970} completes the
  proof of~\ref{it:P:priori}.

  Concerning the stability with respect to $a$, denote
  $w_i(t) = \mathcal{P}_{t_o,t}^i w_o$, for $i=1,2$ and $t \in I$, and
  using~\eqref{eq:solNucleo}, compute
  \begin{align*}
    w_1 (t,x) - w_2 (t,x)
    = \
    &  \int_{t_o}^t\int_{\reali^n}
      H_\mu (t-\tau, x-\xi)
      \left(a_1 (\tau,\xi) \, w_1 (\tau,\xi)
      -
      a_2 (\tau,\xi) \, w_2 (\tau,\xi)\right)
      \d\xi \, \d\tau
    \\
    = \
    & \int_{t_o}^t\int_{\reali^n}
      H_\mu (t-\tau, x-\xi)
      \left(a_1 (\tau,\xi)
      -
      a_2 (\tau,\xi) \right)
      w_1 (\tau,\xi)
      \d\xi \, \d\tau
    \\
    & +
      \int_{t_o}^t\int_{\reali^n}
      H_\mu (t-\tau, x-\xi)
      \, a_2 (\tau,\xi)
      \left( w_1 (\tau,\xi)
      -
      w_2 (\tau,\xi)\right)
      \d\xi \, \d\tau,
  \end{align*}
  so that
  \begin{align*}
    \norma{w_1 (t) - w_2 (t)}_{\L1 (\reali^n)}
    \leq \
    & \int_{t_o}^t \norma{a_1 (\tau) - a_2 (\tau)}_{\L1 (\reali^n)}
      \norma{w_1 (\tau)}_{\L\infty (\reali^n)} \d\tau
    \\
    &
      +
      \int_{t_o}^t \norma{a_2 (\tau)}_{\L\infty (\reali^n)}\,
      \norma{w_1 (\tau) - w_2 (\tau)}_{\L1 (\reali^n)} \d\tau \,.
  \end{align*}
  By Gronwall Lemma,
  \begin{align*}
    &
      \norma{w_1 (t) - w_2 (t)}_{\L1 (\reali^n)}
    \\
    \leq \
    & \int_{t_o}^t \norma{a_1 (\tau) - a_2 (\tau)}_{\L1 (\reali^n)}
      \norma{w_1 (\tau)}_{\L\infty (\reali^n)} \d\tau
      \exp\left(\int_{t_o}^t \norma{a_2 (\tau)}_{\L\infty (\reali^n)} \d\tau\right)
    \\
    \leq \
    & \int_{t_o}^t \norma{a_1 (\tau) - a_2 (\tau)}_{\L1 (\reali^n)}
      \norma{w_o}_{\L\infty (\reali^n)}
      \exp\left(\int_{t_o}^\tau A_1 (s)\d{s}\right)\d\tau
      \exp\left(\int_{t_o}^t \norma{a_2 (\tau)}_{\L\infty (\reali^n)} \d\tau\right)
    \\
    \leq \
    & \norma{w_o}_{\L\infty (\reali^n)} \,
      \exp\left(
      \int_{t_o}^t
      \left(
      \norma{a_1 (\tau)}_{\L\infty (\reali^n)}
      +
      \norma{a_2 (\tau)}_{\L\infty (\reali^n)}
      \right)\d\tau\right) \,
      \norma{a_1 - a_2}_{\L1 ([t_o,t]\times\reali^n)} \,,
  \end{align*}
  completing the proof of~\ref{it:P:stab}.

  Finally, \ref{item:tvwPOS} follows from Proposition~\ref{prop:PBV},
  from~\ref{it:P:posi} and from the $\L\infty$ bound~\eqref{eq:12}.
\end{proof}

\subsection{About the Balance Law
  \texorpdfstring{$\partial_t u +\div (c (t,x) \, u) = b (t,x) \, u +
    q (t,x)$}{}}
\label{subs:Hyp}

We focus on the following Cauchy problem for a linear balance law
\begin{equation}
  \label{eq:hyp}
  \left\{
    \begin{array}{l}
      \partial_t u +\div (c (t,x) \, u) = b (t,x) \, u + q (t,x)
      \\
      u(t_o,x) = u_o (x).
    \end{array}
  \right.
\end{equation}

\noindent Recall the following conditions on the functions defining
problem~\eqref{eq:hyp}:
\begin{enumerate}[label={$\boldsymbol{(b)}$}]
\item \label{b} $b \in \L\infty(I \times \reali^n; \reali)$.
\end{enumerate}
\begin{enumerate}[label={$\boldsymbol{(b+)}$}]
\item \label{b*} $b \in \L\infty(I \times \reali^n; \reali)$
  and $b (t) \in \BV (\reali^n; \reali)$ for $t \in I$.
\end{enumerate}
\begin{enumerate}[label={$\boldsymbol{(c1)}$}]
\item \label{c} The map $c$ satisfies
  $c \in (\C0 \cap \L\infty) (I \times \reali^n; \reali^n)$,
  $c (t) \in \C1(\reali^n; \reali^n)$ for all $t \in I$ and
  $\nabla c \in \L\infty(I \times \reali^n;\reali^{n \times n})$.
\end{enumerate}
\begin{enumerate}[label={$\boldsymbol{(c2)}$}]
\item \label{c*} The map $c$ satisfies
  $c \in (\C0 \cap \L\infty) (I \times \reali^n; \reali^n)$;
  $c (t) \in \C2(\reali^n; \reali^n)$ for all $t \in I$,
  $\nabla c \in \L\infty(I \times \reali^n;\reali^{n \times n})$ and
  $\nabla \div c \in \L1 (I \times \reali^n; \reali^n)$.
\end{enumerate}
\begin{enumerate}[label={$\boldsymbol{(q-)}$}]
\item \label{q}
  $q \in \L\infty(I \times \reali^n; \reali) \cap \L\infty (I; \L1
  (\reali^n; \reali))$.
\end{enumerate}

\begin{definition}
  \label{def:hyp}
  Let~\ref{b}, \ref{c} and~\ref{q} hold and choose
  $u_o \in (\L1 \cap \L\infty) (\reali^n; \reali)$. A \emph{solution}
  to~\eqref{eq:hyp} is a function
  $u \in \C0 (I; \L1 (\reali^n; \reali))$ such that
  \begin{align}
    \label{eq:solCara}
    u (t,x) = \
    & u_o (X (t_o;t,x)) \, \exp\left( \int_{t_o}^t \left(
      b(\tau,X (\tau;t,x)) - \div c \left(\tau,X (\tau;t,x)\right)
      \right) \d\tau \right)
    \\ \nonumber
    & + \int_{t_o}^t q (s, X (s;t,x)) \exp\left(\int_s^t \left(
      b (\tau, X (\tau; t,x)) - \div c
      \left(\tau,X (\tau;t,x)\right) \right)\d \tau\right)\d{s},
  \end{align}
  where
  \begin{equation}
    \label{eq:5}
    t \mapsto X (t;t_o,x_o)
    \quad \mbox{solves the Cauchy Problem} \quad
    \left\{
      \begin{array}{l}
        \dot X = c (t, X)
        \\
        X (t_o) = x_o \,.
      \end{array}
    \right.
  \end{equation}
\end{definition}

\begin{lemma}[{\cite[Lemma~2.7]{parahyp}
    and~\cite[Lemma~5.1]{CHM2011}}]
  \label{lem:hypSol}
  Let~\ref{b}, \ref{c}, \ref{q} hold, Fix
  $u_o \in (\L1 \cap \L\infty) (\reali^n; \reali)$ and
  $u \in \C0 (I; \L1 (\reali^n; \reali))$. Then, the following three
  statements are equivalent:
  \begin{enumerate}[leftmargin=*]
  \item $u$ is a Kru\v zkov solution to~\eqref{eq:hyp},
    i.e.~$u (t_o) = u_o$ and for all $k \in \reali$ and
    $\phi \in \Cc1 (\pint{I} \times \reali^n; \reali_+)$,
    \begin{equation}
      \label{eq:Ksol}
      \int_I \int_{\reali^n}
      \left[
        (u - k) (\partial_t \phi
        +
        c \cdot \nabla \phi)
        +
        (b \,u + q - k \, \div c) \, \phi
      \right]
      \sgn (u - k) \, \d{x} \, \d{t} \geq 0 \,.
    \end{equation}

  \item $u$ is a weak solution to~\eqref{eq:hyp}, i.e.~$u (t_o) = u_o$
    and for all $\phi \in \Cc1 (\pint{I} \times \reali^n;\reali)$,
    \begin{equation}
      \label{eq:Hsol}
      \int_I \int_{\reali^n}
      (u \, \partial_t \phi + u \, c \cdot \nabla \phi + (b \, u +q) \phi) \,
      \d{x} \, \d{t} = 0 \,.
    \end{equation}

  \item $u$ solves~\eqref{eq:hyp} in the sense of
    Definition~\ref{def:hyp}.
  \end{enumerate}
\end{lemma}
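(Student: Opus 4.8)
The plan is to route everything through the representation formula~\eqref{eq:solCara} and the characteristic flow~\eqref{eq:5}, proving $(3)\Rightarrow(2)$, $(2)\Rightarrow(3)$, $(1)\Rightarrow(2)$ and $(3)\Rightarrow(1)$, which together yield the three equivalences. First I would record the properties of the flow. Under~\ref{c} the field $c$ is bounded, continuous in $t$ and uniformly Lipschitz in $x$, so~\eqref{eq:5} has a unique globally defined solution, the map $x\mapsto X(t;s,x)$ is a $\C1$ diffeomorphism of $\reali^n$ for all $s,t\in I$, the group identity $X(t;s,\cdot)\circ X(s;r,\cdot)=X(t;r,\cdot)$ holds, and the Jacobian obeys Liouville's formula $\det\nabla_x X(t;s,x)=\exp\int_s^t(\div c)(\tau,X(\tau;s,x))\,\d\tau$. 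Since $b,\div c\in\L\infty$, the exponential weights in~\eqref{eq:solCara} are bounded above and below on each bounded time interval; this already shows that the function $\hat u$ defined by~\eqref{eq:solCara} lies in $\C0(I;\L1(\reali^n;\reali))$, is bounded, and satisfies $\hat u(t_o)=u_o$.

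For $(3)\Rightarrow(2)$ I would verify~\eqref{eq:Hsol} by passing to Lagrangian coordinates. With $\bar u(t,y)=u(t,X(t;t_o,y))$, the formula~\eqref{eq:solCara} is exactly the Duhamel solution of the linear ODE $\dot{\bar u}=(b-\div c)(t,X(t;t_o,y))\,\bar u+q(t,X(t;t_o,y))$ with $\bar u(t_o,\cdot)=u_o$, hence $t\mapsto\bar u(t,y)$ is absolutely continuous for a.e.\ $y$. Given $\phi\in\Cc1(\pint{I}\times\reali^n;\reali)$, the substitution $x=X(t;t_o,y)$ with $\d{x}=\det\nabla_x X(t;t_o,y)\,\d{y}$, the chain rule $\partial_t[\phi(t,X(t;t_o,y))]=(\partial_t\phi+c\cdot\nabla\phi)(t,X(t;t_o,y))$, Liouville's formula, and an integration by parts in $t$ (boundary terms vanish since $\phi$ has compact support in $\pint{I}$) turn $\int_I\int_{\reali^n}(u\,\partial_t\phi+u\,c\cdot\nabla\phi+(b\,u+q)\phi)\,\d{x}\,\d{t}$ into $0$; this is routine once the flow identities are in place. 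The implication $(1)\Rightarrow(2)$ is the classical fact that a Kru\v zkov solution is a distributional solution: inserting $k$, then $-k$, in~\eqref{eq:Ksol} and letting $k\to+\infty$, the contribution of the $k\,\div c$ term cancels exactly that produced by the flux $c\cdot\nabla\phi$ (both equal $\mp k\int_I\int_{\reali^n}(\div c)\,\phi$), while the remaining terms converge by dominated convergence using $u\in\L1$, leaving~\eqref{eq:Hsol} first for $\phi\geq0$, then for all $\phi$ by linearity.

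It remains to close the cycle with $(2)\Rightarrow(3)$, which reduces to uniqueness of weak solutions, since $(3)\Rightarrow(2)$ already exhibits $\hat u$ as a weak solution with the prescribed data. If $u$ is any weak solution, then $\delta:=u-\hat u$ satisfies $\delta(t_o)=0$ and~\eqref{eq:Hsol} with $q\equiv0$; I would test $\delta$ against $\theta(t)\,\psi(X(t_1;t,x))\exp\int_t^{t_1}b(\tau,X(\tau;t,x))\,\d\tau$, namely the solution of the backward adjoint problem $\partial_t\phi+c\cdot\nabla\phi+b\,\phi=0$ with datum $\psi\in\Cc\infty(\reali^n;\reali)$ at a time $t_1\in\pint{I}$, multiplied by a time cutoff $\theta$, and conclude $\int_{\reali^n}\delta(t_1,x)\,\psi(x)\,\d{x}=0$ for every $\psi$ and every $t_1$, i.e.\ $\delta\equiv0$. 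Finally, for $(3)\Rightarrow(1)$ I would obtain the entropy inequalities for~\eqref{eq:solCara} by approximation: for smooth compactly supported $u_o$, $q$ and smooth $b$, $c$ the representation formula is a classical, hence entropy, solution, and the general case follows approximating the data in $\L1$ with uniform $\L\infty$ bounds (and, unless one invokes DiPerna--Lions renormalisation, also the coefficients) and passing to the limit in~\eqref{eq:Ksol} via the $\L1$- and $\L\infty$-stability of~\eqref{eq:solCara}. The main technical obstacle is the uniqueness step: since $c$ is only continuous, not Lipschitz, in $t$, the adjoint test function above is merely Lipschitz in time (though $\C1$ in $x$) and must be regularised — e.g.\ by mollifying $c$ in $t$ — before it is admissible in~\eqref{eq:Hsol}; handling this regularisation, together with the role of the cutoff $\theta$ near $t_o$ and $t_1$, is where the real work lies, and it is precisely here that the constraint $u(t_o)=u_o$, invisible to the interior test functions of~\eqref{eq:Hsol} and~\eqref{eq:Ksol}, is brought into play.
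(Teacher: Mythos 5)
Your overall architecture --- flow properties and Liouville's formula for \eqref{eq:5}, $(3)\Rightarrow(2)$ by the Lagrangian change of variables, $(1)\Rightarrow(2)$ by letting $k\to\pm\infty$ with the Chebyshev control of the $k$--terms on $\{u\ge k\}$, $(2)\Rightarrow(3)$ by duality, and $(3)\Rightarrow(1)$ by smoothing the data (and coefficients) --- is precisely the ``mix of the techniques'' of \cite[Lemma~2.7]{parahyp} and \cite[Lemma~5.1]{CHM2011} that the paper invokes without writing out, together with the characterization via \eqref{eq:solCara} for which it also points to \cite[Lemma~3.4]{KPS2018}. So as a plan it matches the intended proof.

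The one step that would fail as written is the uniqueness argument $(2)\Rightarrow(3)$, and you have misdiagnosed the obstruction there. Under~\ref{b} the coefficient $b$ is only $\L\infty(I\times\reali^n;\reali)$, so the weight $\exp\int_t^{t_1} b\left(\tau,X(\tau;t,x)\right)\d\tau$ is in general not $\C1$ --- not even continuous --- in $x$ (nor in $t$); hence your adjoint test function is not admissible in \eqref{eq:Hsol}, and mollifying $c$ in time does not touch this. In fact the time regularity of $c$ is not an issue at all: since $c$ is continuous in $(t,x)$ and $\C1$ in $x$ with bounded gradient, one has $\partial_t X(t_1;t,x)=-\nabla_x X(t_1;t,x)\,c(t,x)$, so $(t,x)\mapsto\psi\left(X(t_1;t,x)\right)$ is already $\C1$ with compact support in $x$. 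The standard repairs are: either mollify $b$ in $(t,x)$, build the adjoint with $b_\epsilon$, and remove the error $\int_I\int_{\reali^n}\delta\,(b-b_\epsilon)\,\phi_\epsilon\,\d{x}\,\d{t}$ by dominated convergence ($\delta\,\phi_\epsilon$ is uniformly in $\L1$ and $b_\epsilon\to b$ a.e.\ boundedly); or drop the $b$--weight altogether, test $\delta=u-\hat u$ against $\theta(t)\,\psi\left(X(t_1;t,x)\right)$, treat $b\,\delta$ as a source, take the supremum over $\psi$ with $\modulo{\psi}\le1$ and close with Gronwall on $\norma{\delta(t)}_{\L1(\reali^n)}$; in either variant the limit $\theta\to\caratt{[t_o,t_1]}$ combined with $\delta\in\C0(I;\L1(\reali^n;\reali))$ and $\delta(t_o)=0$ supplies the initial condition, exactly as you anticipated. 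With that correction your proof goes through; the remaining points you flag (smoothing $c$ to get classical solutions in $(3)\Rightarrow(1)$, stability of \eqref{eq:Ksol} under $\L1$ limits) are routine.
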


\noindent The proof amounts to mix the techniques used
in~\cite[Lemma~2.7]{parahyp} and~\cite[Lemma~5.1]{CHM2011}.

We recall a different approach to the study of linear balance laws of
type~\eqref{eq:hyp}, which is adopted
in~\cite[Lemma~3.4]{KPS2018}. That Lemma guarantees the existence of a
weak solution, in the sense of~\eqref{eq:Hsol} in
Lemma~\ref{lem:hypSol}, and provides an explicit formula for the
solution in terms of characteristics, corresponding exactly
to~\eqref{eq:solCara}. The regularity requirements in~\cite{KPS2018}
on the functions defining problem~\eqref{eq:hyp} are the following:
for $T\in\reali$, $T>0$,
\begin{align*}
  u_o \in \
  & \L1 (\reali^n;\reali),
  &
    b \in \
  & \L1 ((0,T);\L\infty (\reali^n; \reali)),
  &
    q \in \
  & \L1 ((0,T);\L1 (\reali^n;\reali)),
\end{align*}
and $c \in \C0 ((0,T);\C1 (\reali^n; \reali^n))$ is globally Lipschitz
continuous in space.  Notice that, for $T \in \reali$, $T>0$, our
assumptions~\ref{b}, \ref{c} and~\ref{q} are stronger than those
required in~\cite[Lemma~3.4]{KPS2018}, allowing to apply that result
in the present setting.

The next proposition is not only an extension
of~\cite[Proposition~2.8]{parahyp} to the present setting, but it also
improves it sharply.

\begin{proposition}
  \label{prop:hyper}
  Under the assumptions~\ref{b}, \ref{c} and~\ref{q}, the Cauchy
  Problem~\eqref{eq:hyp} generates the map
  \begin{displaymath}
    \begin{array}{ccccccc}
      \mathcal{H}
      & \colon
      & J
      & \times
      & \mathcal{U}
      & \to
      & \mathcal{U}
      \\
      &
      & (t_o,t)
      & ,
      & u_o
      & \to
      & u
    \end{array}
  \end{displaymath}
  where $u$ is defined by~\eqref{eq:solCara}, with the following
  properties:
  \begin{enumerate}[label=\bf{(H\arabic*)}]
    \setlength{\itemsep}{1pt} \setlength{\parskip}{1pt} \smallskip

  \item\label{item:uProcess} \textbf{$\boldsymbol{\mathcal{H}}$ is a
      process:} $\mathcal{H}_{t,t} = \Id$ for all $t \in I$ and
    $\mathcal{H}_{t_2,t_3} \circ \mathcal{H}_{t_1,t_2} =
    \mathcal{H}_{t_1,t_3}$ for all $t_1, t_2, t_3 \in I$, with
    $t_1 \leq t_2 \leq t_3$.  \smallskip

  \item\label{item:upos} \textbf{Positivity:} if $q \geq 0$ and
    $u_o \in \mathcal{U}^+$, then
    $\mathcal{H}_{t_o, t} \, u_o \in \mathcal{U}^+$ for all $t \in I$.

  \item\label{item:uL1} \textbf{$\L1$ continuous dependence on
      $\boldsymbol{u_o}$:} for all $t \in I$ the map
    $\mathcal{H}_{t_o,t} \colon \mathcal{U} \to \mathcal{U}$ is
    linear, continuous and
    \begin{displaymath}
      \norma{\mathcal{H}_{t_o,t} u_o}_{\L1 (\reali^n)}
      \leq
      \left(
        \norma{u_o}_{\L1 (\reali^n)}
        +
        \norma{q}_{\L1 ([t_o,t]\times\reali^n)}
      \right)
      \exp \int_{t_o}^t \norma{b(\tau)}_{\L\infty (\reali^n)} \d\tau \,.
    \end{displaymath}
    Moreover, if $u_o \geq 0$ and $q \geq 0$, then
    \begin{displaymath}
      \norma{\mathcal{H}_{t_o,t} u_o}_{\L1 (\reali^n)}
      \leq
      \left(
        \norma{u_o}_{\L1 (\reali^n)}
        +
        \norma{q}_{\L1 ([t_o,t]\times\reali^n)}
      \right)
      \exp \int_{t_o}^t \left(\sup_{x\in \reali^n} b (\tau,x)\right) \d\tau \,.
    \end{displaymath}

  \item\label{item:uLinf} \textbf{$\L\infty$--estimate:} for all
    $u_o \in \mathcal{U}$, for all $t \in I$,
    \begin{eqnarray*}
      \norma{\mathcal{H}_{t_o,t} u_o}_{\L\infty (\reali^n)}
      & \leq
      & \left(
        \norma{u_o}_{\L\infty (\reali^n)}
        +
        \norma{q}_{\L1 ([t_o,t]; \L\infty (\reali^n))}
        \right)
      \\
      &
      & \times
        \exp \int_{t_o}^t
        \left(
        \norma{b(\tau)}_{\L\infty (\reali^n)}
        +
        \norma{\div c (\tau)}_{\L\infty (\reali^n)}
        \right) \d\tau .
    \end{eqnarray*}
    Moreover, if $u_o \geq 0$ and $q \geq 0$, then
    \begin{eqnarray*}
      \norma{\mathcal{H}_{t_o,t} u_o}_{\L\infty (\reali^n)}
      & \leq
      & \left(
        \norma{u_o}_{\L\infty (\reali^n)}
        +
        \norma{q}_{\L1 ([t_o,t]; \L\infty (\reali^n))}
        \right)
      \\
      &
      & \times
        \exp \int_{t_o}^t
        \left(
        \left(\sup_{x\in \reali^n} b (\tau,x)\right)
        +
        \norma{\div c (\tau)}_{\L\infty (\reali^n)}
        \right) \d\tau .
    \end{eqnarray*}

  \item\label{item:ustab} \textbf{Stability with respect to
      $\boldsymbol{b,c,q}$:} if $b, \tilde{b}$ satisfy~\ref{b*} with
    $b -\tilde{b} \in \L1 (I \times \reali^n; \reali)$; $c, \tilde{c}$
    satisfy~\ref{c*} with
    $\div (c -\tilde{c}) \in \L1 (I \times \reali^n; \reali)$ and
    $q, \tilde q$ satisfy~\ref{q*}. Call
    $\mathcal{H}, \tilde{\mathcal{H}}$ the corresponding
    processes. Then, for all $t \in I$ and for all
    $u_o \in \mathcal{U}$,
    \begin{align*}
      & \norma{
        \mathcal{H}_{t_o,t} u_o
        -
        \tilde{\mathcal{H}}_{t_o,t} u_o}_{\L1 (\reali^n)}
      \\
      \leq \
      & \mathcal{O}_1 (t) \,
        \norma{c-\tilde{c}}_{\L1 ([t_o,t];\L\infty (\reali^n; \reali^n))}
        \biggl[ \norma{u_o}_{\L\infty (\reali^n)} + \tv (u_o)
      \\
      & \quad \left.
        +  \int_{t_o}^t \!\!
        \left(
        \max
        \left\{
        \norma{q(\tau)}_{\L\infty (\reali^n)},
        \norma{\tilde q(\tau)}_{\L\infty (\reali^n)}
        \right\}
        + \max\left\{\tv\left( q(\tau)\right), \,
        \tv\left(\tilde q (\tau)\right)\right\}\right) \d\tau
        \right]
      \\
      & +
        \mathcal{O}_2 (t)  \,
        \norma{q- \tilde q}_{\L1 ([t_o,t] \times \reali^n)}
      \\
      & +
        \mathcal{O}_2 (t) \left(
        \norma{u_o}_{\L\infty (\reali^n)}
        + \int_{t_o}^t \max
        \left\{
        \norma{q(\tau)}_{\L\infty (\reali^n)},
        \norma{\tilde q(\tau)}_{\L\infty (\reali^n)}
        \right\} \d\tau
        \right)
      \\
      & \quad \times
        \left(
        \norma{b-\tilde{b}}_{\L1 ([t_o,t]\times\reali^n)}
        +
        \norma{\div (c-\tilde{c})}_{\L1 ([t_o,t]\times\reali^n)}
        \right),
    \end{align*}
    where
    \begin{align*}
      \mathcal{O}_1 (t)
      = \
      & \exp
        \int_{t_o}^t
        \max
        \left\{
        \norma{b(\tau)}_{\L\infty (\reali^n)},
        \norma{\tilde b(\tau)}_{\L\infty (\reali^n)}
        \right\}
        \d\tau
      \\
      & \times \exp
        \int_{t_o}^t
        \max\left\{
        \norma{\nabla c (\tau)}_{\L\infty (\reali^n; \reali^{n \times n})},
        \norma{\nabla \tilde{c} (\tau)}_{\L\infty (\reali^n; \reali^{n \times n})}
        \right\}
        \d\tau
      \\
      & \times
        \left[
        1
        +
        \int_{t_o}^t
        \max\left\{
        \begin{array}{c}
          \tv\left(b (s)\right)
          +
          \norma{\nabla\div c (s)}_{\L1 (\reali^n; \reali^n)},
          \\
          \tv \left(\tilde{b} (s)\right)
          + \norma{\nabla \div \tilde{c} (s) }_{\L1 (\reali^n;\reali^n)}
        \end{array}
      \right\}\d{s}
      \right],
      \\
      \mathcal{O}_2 (t)
      = \
      & \exp
        \int_{t_o}^t
        \max
        \left\{
        \norma{b(\tau)}_{\L\infty (\reali^n)},
        \norma{\tilde b(\tau)}_{\L\infty (\reali^n)}
        \right\}
        \d\tau.
    \end{align*}

  \item\label{item:utv} \textbf{Total variation bound:} let~\ref{b*},
    \ref{c*} and~\ref{q*} hold. If $u_o \in \mathcal{U}$, then, for
    all $t \in I$,
    \begin{align*}
      \tv\left(\mathcal{H}_{t_o,t} u_o \right)
      \leq \
      & \mathcal{O} (t)
        \left(\norma{u_o}_{\L\infty (\reali^n)} + \tv (u_o)
        + \int_{t_o}^t
        \left(
        \norma{q(\tau)}_{\L\infty (\reali^n)}
        +
        \tv\left(q (\tau)\right) \right)\d\tau
        \right) ,
    \end{align*}
    where
    \begin{align*}
      \mathcal{O} (t)
      = \
      & \exp \left(
        \int_{t_o}^t \left(
        \norma{b(\tau)}_{\L\infty (\reali^n)}
        +
        \norma{\nabla c (\tau)}_{\L\infty (\reali^n; \reali^{n \times n})}
        \right)\d\tau
        \right)
      \\
      & \quad \times \left(
        1
        +
        \int_{t_o}^t \left(
        \tv\left(b (\tau)\right)
        +
        \norma{\nabla \div c (\tau)}_{\L1 (\reali^n; \reali^n)}
        \right) \d\tau
        \right).
    \end{align*}

  \item\label{item:uLipt} \textbf{Regularity in time:} let~\ref{b*},
    \ref{c*} and~\ref{q*} hold. For all $u_o \in \mathcal{U}$, the map
    $t \to \mathcal{H}_{t_o,t} u_o$ is in
    $\C{0,1} \left(I; \L1(\reali^n; \reali)\right)$, moreover for all
    $t_1, t_2 \in I$, with $\mathcal{O} (t)$ as above,
    \begin{align*}
      & \norma{
        \mathcal{H}_{t_o,t_2} u_o - \mathcal{H}_{t_o,t_1} u_o}_{\L1 (\reali^n)}
      \\
      \leq \
      & \mathcal{O} (t_1 \vee t_2) \!
        \left(
        \norma{u_o}_{\L\infty (\reali^n)}
        +
        \tv (u_o)\!
        + \!\int_{t_o}^{t_1 \vee t_2}\!\!
        \left(
        \norma{q(\tau)}_{\L\infty (\reali^n)}
        +
        \tv\left(q (\tau)\right) \right)\d\tau
        \right)
        \modulo{t_2-t_1} .
    \end{align*}

  \item\label{item:uSpt} \textbf{Finite propagation speed:} if, for
    all $t \in I$, the map $x \to q (t,x)$ is compactly supported and
    $u_o \in \mathcal{U}$ has compact support, then, for $t\in I$
    also, $\spt \mathcal{H}_{t_o,t}u_o$ is compact.
  \end{enumerate}
\end{proposition}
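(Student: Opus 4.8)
The plan is to build everything on the explicit representation~\eqref{eq:solCara} and on elementary properties of the characteristic flow. First I would record that, under~\ref{c}, the Cauchy problem~\eqref{eq:5} has a unique, globally defined solution $t \mapsto X(t;t_o,x_o)$ by the Cauchy--Lipschitz theorem; that $X(t;t_o,\cdot)$ is a bi-Lipschitz diffeomorphism of $\reali^n$ with inverse $X(t_o;t,\cdot)$; that its spatial Jacobian obeys Liouville's identity $\det \nabla_{x_o}X(t;t_o,x_o) = \exp \int_{t_o}^t (\div c)\bigl(\tau,X(\tau;t_o,x_o)\bigr)\,\d\tau$; that the variational equation $\partial_t \nabla_{x_o}X = \nabla c(t,X)\,\nabla_{x_o}X$ gives $\norma{\nabla_{x_o}X(t;t_o,\cdot)}_{\L\infty (\reali^n;\reali^{n\times n})} \leq \exp \int_{t_o}^t \norma{\nabla c(\tau)}_{\L\infty (\reali^n)}\,\d\tau$; and that $\modulo{X(t;t_o,x_o) - x_o} \leq \norma{c}_{\L\infty (I\times\reali^n)}\,(t-t_o)$. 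The existence and uniqueness of a solution $u \in \C0 (I;\L1 (\reali^n;\reali))$ to~\eqref{eq:hyp}, together with its coincidence with the right hand side of~\eqref{eq:solCara}, are already provided by Lemma~\ref{lem:hypSol}. Every estimate below then reduces to manipulating~\eqref{eq:solCara} and to the change of variable $x \mapsto X(s;t,x)$.

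Next I would dispatch the quick items. \ref{item:uProcess} follows from uniqueness, since $\mathcal{H}_{t_2,t_3}\circ\mathcal{H}_{t_1,t_2}u_o$ and $\mathcal{H}_{t_1,t_3}u_o$ solve the same problem on $[t_2,t_3]$ with the same datum at $t_2$ (equivalently, from the cocycle identity for $X$); \ref{item:upos} follows because every term in~\eqref{eq:solCara} is nonnegative when $u_o \geq 0$ and $q \geq 0$, the exponentials being positive; \ref{item:uSpt} follows from $\modulo{X(t;t_o,x_o)-x_o}\leq\norma{c}_{\L\infty}(t-t_o)$, so that $u(t,x)$ depends on $u_o$ and on $q$ only through their values within distance $\norma{c}_{\L\infty}(t-t_o)$ of $x$. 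The $\L\infty$ bound~\ref{item:uLinf} I would read directly off~\eqref{eq:solCara}, bounding each exponential by $\exp\int_{t_o}^t\bigl(\norma{b(\tau)}_{\L\infty}+\norma{\div c(\tau)}_{\L\infty}\bigr)\,\d\tau$ and pulling it out of the $s$--integral. For the $\L1$ bound~\ref{item:uL1} I would change variables $y=X(t_o;t,x)$ in the $u_o$--term and $z=X(s;t,x)$ in the $q$--term; since $X(\tau;t,x)=X(\tau;t_o,y)$ along a characteristic, the Jacobian $\exp\int\div c$ cancels exactly the $-\div c$ appearing in the exponents, leaving $\exp\int b$ only. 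The sharpened bounds for $u_o,q\geq 0$ come from replacing $b(\tau,\cdot)$ by $\sup_x b(\tau,x)$, legitimate by monotonicity of $\exp$ and nonnegativity; I would stress that in~\ref{item:uL1} no $\div c$ survives the cancellation, whereas in~\ref{item:uLinf} the term $\norma{\div c}_{\L\infty}$ persists.

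The total variation bound~\ref{item:utv} I would obtain by smoothing. Using Lemma~\ref{lem:approx2} take $u_o^h\in\C\infty (\reali^n;\reali)$ with $\norma{u_o^h}_{\L\infty}\leq\norma{u_o}_{\L\infty}$ and $\tv(u_o^h)\to\tv(u_o)$, and using Lemma~\ref{lem:approx} take $b_h,q_h$ smooth in space with $\norma{b_h(t)}_{\L\infty}\leq\norma{b(t)}_{\L\infty}$, $\tv(b_h(t))\leq\tv(b(t))$ and similarly for $q_h$; the field $c$ is already $\C2$ in space by~\ref{c*}. Then the corresponding $u_h$ from~\eqref{eq:solCara} is $\C1$ in $x$, and differentiating~\eqref{eq:solCara} writes $\nabla_x u_h(t,x)$ as a sum of four terms built from $\nabla u_o^h$, $u_o^h$, $\nabla q_h$ and $q_h$, each carrying the uniformly bounded matrix $\nabla_x X(\sigma;t,x)$ and/or a factor of the form $\mathcal{E}(t,x)\int_{t_o}^t\nabla(b_h-\div c)\bigl(\tau,X(\tau;t,x)\bigr)\,\nabla_x X(\tau;t,x)\,\d\tau$. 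Integrating $\modulo{\nabla_x u_h(t,\cdot)}$ over $\reali^n$, changing variables back to the foot of each characteristic (so that the Jacobian again absorbs the $-\div c$), and using $\int_{\reali^n}\modulo{\nabla b_h(\tau,z)}\,\d z=\tv(b_h(\tau))\leq\tv(b(\tau))$, $\int_{\reali^n}\modulo{\nabla\div c(\tau,z)}\,\d z=\norma{\nabla\div c(\tau)}_{\L1}$, the bound on $\norma{\nabla_x X}_{\L\infty}$ recorded above, and the already proven $\L1$ and $\L\infty$ estimates, I would reach the asserted bound for $\norma{\nabla_x u_h(t)}_{\L1}$. Finally, letting $h\to+\infty$ one has $u_h(t)\to u(t)$ in $\Lloc1 (\reali^n)$ — by dominated convergence from $u_o^h\to u_o$ in $\L1$, $b_h(t)\to b(t)$, $q_h(t)\to q(t)$ in $\Lloc1$ and the uniform bounds — which is enough for the lower semicontinuity of the total variation to give $\tv\bigl(u(t)\bigr)\leq\liminf_{h\to+\infty}\norma{\nabla_x u_h(t)}_{\L1}$, i.e.~\ref{item:utv}.

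For the stability estimate~\ref{item:ustab} I would compare the two representations~\eqref{eq:solCara} attached to $(b,c,q)$ and $(\tilde b,\tilde c,\tilde q)$, with flows $X$ and $\tilde X$, and split $u(t,\cdot)-\tilde u(t,\cdot)$ into three telescoping contributions. First, swap $\tilde X$ for $X$ in all arguments: Gronwall applied to the difference of the ODEs~\eqref{eq:5} gives $\sup_{\tau\in[t_o,t]}\norma{X(\tau;t,\cdot)-\tilde X(\tau;t,\cdot)}_{\L\infty}\leq\mathcal{O}_1(t)\,\norma{c-\tilde c}_{\L1 ([t_o,t];\L\infty)}$, and, once integrated in $x$ against the gradients of $u_o$, of $q(\tau)$ and of the exponential factors, this produces precisely the bracket with prefactor $\mathcal{O}_1(t)$, the quantities $\tv(b),\tv(\tilde b),\norma{\nabla\div c}_{\L1}$ entering because one differentiates those exponentials. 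Second, with the flow frozen at $X$, swap $\tilde b$ and $\div\tilde c$ for $b$ and $\div c$ in the exponents: by local Lipschitz continuity of $\exp$ this costs $\mathcal{O}_2(t)\bigl(\norma{b-\tilde b}_{\L1}+\norma{\div(c-\tilde c)}_{\L1}\bigr)$ times the size of $u_o$ and of $q$. Third, swap $\tilde q$ for $q$, costing $\mathcal{O}_2(t)\,\norma{q-\tilde q}_{\L1 ([t_o,t]\times\reali^n)}$; summing the three yields~\ref{item:ustab}. Finally,~\ref{item:uLipt} follows from the transport form $\partial_t u+c\cdot\nabla u=(b-\div c)\,u+q$: for $t_1\leq t_2$, $\norma{u(t_2)-u(t_1)}_{\L1}\leq\int_{t_1}^{t_2}\bigl(\norma{c(\tau)}_{\L\infty}\tv(u(\tau))+(\norma{b(\tau)}_{\L\infty}+\norma{\div c(\tau)}_{\L\infty})\norma{u(\tau)}_{\L1}+\norma{q(\tau)}_{\L1}\bigr)\,\d\tau$, into which I would insert~\ref{item:uL1} and~\ref{item:utv}. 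The main obstacle throughout will be the bookkeeping in~\ref{item:utv} and~\ref{item:ustab}: keeping precise track of which quantity is evaluated along which characteristic, and ordering the changes of variables so that every factor $\div c$ is absorbed by a Jacobian; once that is organized, the remaining items are a direct reading of~\eqref{eq:solCara}.
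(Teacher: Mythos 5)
Your overall strategy is the paper's: everything is read off the representation~\eqref{eq:solCara}; the $\L1$ bound~\ref{item:uL1} comes from the change of variables along the characteristics~\eqref{eq:5}, whose Jacobian $\exp\int\div c$ cancels the $-\div c$ in the exponent; positivity, the $\L\infty$ bound and finite propagation speed are direct readings of~\eqref{eq:solCara}; and~\ref{item:utv} is obtained by mollifying $u_o$, $b$, $q$ via Lemmas~\ref{lem:approx2} and~\ref{lem:approx}, differentiating the representation of $u_h$, changing variables, and concluding by lower semicontinuity of the total variation. The one genuinely different route is~\ref{item:ustab}: you telescope directly (swap the flow, then the exponents, then $q$), while the paper interpolates the data with a parameter $\theta\in[0,1]$, differentiates the representation of $u_h^\theta$ in $\theta$, and integrates over $\theta$ (the method of~\cite{siam2018}), estimating $\partial_\theta X^\theta$ by the variational equation. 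The two are equivalent mean-value--type arguments; provided you run your telescoping on the regularized data $u_o^h$, $b_h$, $q_h$ (as you do in~\ref{item:utv} --- the ``gradients of $u_o$, of $q(\tau)$'' you invoke do not exist for merely $\BV$ data, so the smoothing step is not optional here), it yields the same constants $\mathcal{O}_1$, $\mathcal{O}_2$ up to the $\max$ bookkeeping. The homotopy version mainly buys a single computation in which all coefficients are varied at once.

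Where your proposal does not prove the statement as written is~\ref{item:uLipt}. Your estimate $\norma{u(t_2)-u(t_1)}_{\L1(\reali^n)}\leq\int_{t_1}^{t_2}\bigl(\norma{c(\tau)}_{\L\infty(\reali^n;\reali^n)}\tv\left(u(\tau)\right)+(\norma{b(\tau)}_{\L\infty(\reali^n)}+\norma{\div c(\tau)}_{\L\infty(\reali^n)})\norma{u(\tau)}_{\L1(\reali^n)}+\norma{q(\tau)}_{\L1(\reali^n)}\bigr)\d\tau$ does give Lipschitz continuity in time, but its constant involves $\norma{c}_{\L\infty}$, $\norma{u(\tau)}_{\L1}$ (hence $\norma{u_o}_{\L1}$ and $\norma{q}_{\L1([t_o,t]\times\reali^n)}$), quantities that are neither present in nor dominated by the right hand side asserted in~\ref{item:uLipt}, which is $\mathcal{O}(t_1\vee t_2)$ times $\norma{u_o}_{\L\infty}+\tv(u_o)+\int(\norma{q(\tau)}_{\L\infty}+\tv(q(\tau)))\d\tau$ only; moreover, applying the strong transport form to a solution that is merely $\BV$ in $x$ needs the same approximation machinery anyway. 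The paper proves~\ref{item:uLipt} by repeating the computation of~\ref{item:utv} on the time increment, i.e.\ estimating $u_h(t_2)-u_h(t_1)$ directly from~\eqref{eq:17}, which leads to $\norma{u(t_2)-u(t_1)}_{\L1(\reali^n)}\leq\tv\left(u(t_1\vee t_2)\right)\modulo{t_2-t_1}$, and then inserts the bound of~\ref{item:utv}. So either redo the time increment through the representation formula as in~\ref{item:utv}, or your argument only delivers a weaker form of~\ref{item:uLipt} than claimed.
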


\begin{proof}
  Statement~\ref{item:uProcess} directly follows from
  Definition~\ref{def:hyp}, Lemma~\ref{lem:hypSol}
  and~\cite[Lemma~3.4]{KPS2018}, thanks to~\ref{b}, \ref{c}
  and~\ref{q}.  Using~\eqref{eq:solCara}, points~\ref{item:upos},
  \ref{item:uLinf} and~\ref{item:uSpt} are ensured.

  To get the $\L1$ bound~\ref{item:uL1}, exploit the change of
  variable $y = X (s;t,x)$, see also~\cite[\S~5.1]{CHM2011}. Denoting
  the Jacobian of this change of variable by
  $J (t,y) = \det\left(\nabla_x X (t;s,y)\right)$, $J$ solves
  \begin{displaymath}
    \frac{\d{J (t,y)}}{\d{t}} = \div c (t, X (t;s,y)) \, J (t,y)
    \qquad \mbox{ with }
    \qquad
    J (s,y) = 1.
  \end{displaymath}
  Thus,
  $J (t,y) = \exp\left(\int_s^t \div c (\tau, X (\tau;s,y))
    \d\tau\right)$, so that $J (t,y) > 0$ for $t \in I$
  and~\ref{item:uL1} follows.

  To prove the remaining points, we exploit the techniques used in the
  proof of~\cite[Lemma~4.4 and Lemma~4.6]{siam2018} for an initial
  boundary value problem for a conservation law, thus without source
  term. To this aim, we approximate $b$, respectively $q$, by a
  sequence $b_h$, respectively $q_h$, as in
  Lemma~\ref{lem:approx}. Regularize also the initial datum $u_o$ and
  call $u_o^h \in \C\infty (\reali^n; \reali)$ the sequence defined by
  Lemma~\ref{lem:approx2}. Using~\eqref{eq:solCara}, define the
  corresponding sequence $u_h$ of solutions to
  \begin{displaymath}
    \left\{
      \begin{array}{l}
        \partial_t u_h +\div (c (t,x) \, u_h) = b_h (t,x) \, u_h + q_h (t,x)
        \\
        u_h(t_o,x) = u_o^h (x) \,,
      \end{array}
    \right.
  \end{displaymath}
  so that
  \begin{align}
    \label{eq:17}
    u_h (t,x) = \
    & u_o^h (X (t_o;t,x)) \, \exp\left( \int_{t_o}^t \left(
      b_h(\tau,X (\tau;t,x)) - \div c \left(\tau,X (\tau;t,x)\right)
      \right) \d\tau \right)
    \\ \nonumber
    & + \int_{t_o}^t q_h (s, X (s;t,x)) \exp\left(\int_s^t \left(
      b_h (\tau, X (\tau; t,x)) - \div c
      \left(\tau,X (\tau;t,x)\right) \right)\d \tau\right)\d{s},
  \end{align}
  where $X$ is defined in~\eqref{eq:5}. Observe that for
  a.e.~$t \in I$, the map $x \to u_h (t,x)$ is of class $\C1$, due to
  Lemma~\ref{lem:approx}, applied to both $b$ and $q$, and
  to~\ref{c*}.

  Pass now to~\ref{item:utv}. Differentiate the solution
  to~\eqref{eq:5} with respect to the initial point, that is, for
  $\tau \in [t_o,t]$,
  \begin{align*}
    \nabla_x X (\tau;t,x) = \
    & \Id + \int_t^\tau \nabla_x c (s, X (s;t,x)) \, \nabla_x X (s;t,x)\d{s},
    \\
    \norma{\nabla_x X (\tau;t,x)} \leq \
    & 1 + \int_\tau^t \norma{\nabla_x c (s, X (s;t,x))} \; \norma{\nabla_x X (s;t,x)} \d{s},
  \end{align*}
  so that, by Gronwall Lemma,
  \begin{equation}
    \label{eq:7}
    \norma{\nabla_x X (\tau;t,x)} \leq
    \exp \left(
      \int_\tau^t
      \norma{\nabla_x c (s)}_{\L\infty (\reali^n;\reali^{n \times n})}
    \right) \d{s}.
  \end{equation}
  By~\eqref{eq:17} and the properties of $u_o^h$, the gradient
  $\nabla u_h (t)$ is well defined and continuous:
  \begin{align*}
    \nabla u_h (t,x)
    = \
    & \exp \left(
      \int_{t_o}^t \left(b_h - \div c \right) (\tau, X (\tau;t,x)) \d\tau
      \right)
      \biggl(
      \nabla u_o^h(X (t_o;t,x)) \, \nabla_x X (t_o;t,x)
    \\
    & \left. + \, u_o^h (X (t_o;t,x))
      \int_{t_o}^t \nabla \left(b_h - \div c\right)\! (\tau,X (\tau;t,x)) \,
      \nabla_xX (\tau;t,x) \d\tau
      \right)
    \\
    & + \int_{t_o}^t \exp \left(
      \int_s^t \left(b_h - \div c \right) (\tau, X (\tau;t,x)) \d\tau
      \right)
      \Bigl( \nabla q_h (s, X (s;t,x)) \, \nabla_xX (s;t,x)
    \\
    & \qquad \left. + q_h (s, X (s;t,x))
      \int_s^t  \nabla \left(b_h - \div c\right)\! (\tau,X (\tau;t,x)) \,
      \nabla_xX (\tau;t,x) \d\tau\right) \d{s}.
  \end{align*}
  Therefore, for every $t \in I$, we use the change of variable
  described at the beginning of the proof together with~\eqref{eq:7}
  to get
  \begin{align}
    \nonumber
    &\norma{\nabla u_h (t)}_{\L1 (\reali^n; \reali^n)}
    \\ \nonumber
    \leq \
    & \exp \left(\int_{t_o}^t \norma{b_h(\tau)}_{\L\infty (\reali^n)} \d\tau \right) \exp
      \left(\int_{t_o}^t
      \norma{\nabla c (\tau)}_{\L\infty (\reali^n; \reali^{n \times n})}\d\tau\right)
    \\  \label{eq:8}
    & \times \Biggl[
      \norma{\nabla u_o^h}_{\L1 (\reali^n)}
      + \int_{t_o}^t \norma{\nabla q_h (\tau)}_{\L1 (\reali^n; \reali^n)} \d\tau
    \\ \nonumber
    &\quad \left.
      +\left(
      \norma{u_o^h}_{\L\infty (\reali^n)}
      + \int_{t_o}^t \norma{q_h(\tau)}_{\L\infty (\reali^n)} \d{\tau}\right)
      \int_{t_o}^t \norma{\nabla
      (b_h - \div c) (\tau)}_{\L1 (\reali^n; \reali^n)} \d\tau
      \right] .
  \end{align}

  Let $u$ be defined as in~\eqref{eq:solCara}: Lemma~\ref{lem:approx}
  and Lemma~\ref{lem:approx2} imply that $u_h \to u$ in
  $\L1 (\reali^n; \reali)$. By the lower semicontinuity of the total
  variation, by~\eqref{eq:8} and~\eqref{eq:6}, for $t\in I$ we obtain
  \begin{align}
    \label{eq:14}
    &  \tv (u (t)) \leq
      \lim_h \tv (u_h (t)) =
      \lim_h \norma{\nabla u_h (t)}_{\L1(\reali^n; \reali^n)}
    \\
    \nonumber
    \leq  \
    &  \exp \left(
      \int_{t_o}^t \left( \norma{b(\tau)}_{\L\infty (\reali^n)}
      +
      \norma{\nabla c (\tau)}_{\L\infty (\reali^n; \reali^{n \times n})}
      \right)\d\tau
      \right)
      \Biggl[
      \tv (u_o) + \int_{t_o}^t \tv \left(q (\tau)\right)\d\tau
    \\ \nonumber
    & \qquad\left. +
      \left( \norma{u_o}_{\L\infty (\reali^n)}
      + \int_{t_o}^t \norma{q(\tau)}_{\L\infty (\reali^n)} \d\tau\right)
      \int_{t_o}^t
      \left( \tv \left(b (\tau)\right)
      +
      \norma{\nabla \div c (\tau)}_{\L1 (\reali^n; \reali^n)}\right) \d\tau
      \right],
  \end{align}
  concluding the proof of~\ref{item:utv}.

  The proof of~\ref{item:uLipt}, is entirely analogous, leading to
  \begin{displaymath}
    \norma{u (t_2) - u (t_1)}_{\L1 (\reali^n)} \leq
    \tv \left(u\left(\max\{t_1,t_2\}\right)\right) \modulo{t_2-t_1}.
  \end{displaymath}

  To prove~\ref{item:ustab}, we follow the idea of the proof
  of~\cite[Lemma~4.6]{siam2018}, adapting it to the present
  setting. With obvious notation, we denote by $b_h$ and $\tilde b_h$
  sequences of functions converging to $b$ and $\tilde b$, with the
  properties in Lemma~\ref{lem:approx}. Similarly, we denote by $q_h$
  and $\tilde q_h$ sequences of functions converging to $q$ and
  $\tilde q$, with the properties in Lemma~\ref{lem:approx}. Consider
  also the regularization of the initial datum
  $u_o^h \in \C\infty (\reali^n; \reali)$ provided by
  Lemma~\ref{lem:approx2}. 
  For $\theta \in [0,1]$ set
  \begin{align*}
    b^\theta_h (t,x) = \
    & \theta \, b_h (t,x) + (1 - \theta) \, \tilde{b}_h (t,x),
    & c^\theta (t,x) = \
    & \theta \, c (t,x) + (1 - \theta) \, \tilde{c} (t,x),
    \\
    q^\theta_h (t,x) = \
    & \theta \, q_h (t,x) + (1 - \theta) \, \tilde{q}_h (t,x).
  \end{align*}
  Let $u^\theta_h$ be the solution to
  \begin{displaymath}
    \nonumber
    \left\{
      \begin{array}{l}
        \partial_t u^\theta_h
        +
        \div \left(c^\theta (t,x) \, u^\theta_h\right)
        =
        b^\theta_h (t,x) \, u^\theta_h + q_h^\theta (t,x)
        \\
        u^\theta_h (t_o,x)= u_o^h (x) \,,
      \end{array}
    \right.
    \quad \mbox{ where } \quad
    \left\{
      \begin{array}{l}
        \dot X^\theta = c^\theta (t, X^\theta)
        \\
        X^\theta (t_o) = x_o \,,
      \end{array}
    \right.
  \end{displaymath}
  that is
  \begin{equation}
    \label{eq:9}
    \begin{aligned}
      u^\theta_h (t,x) = \ & u_o^h (X^\theta (t_o;t,x)) \, \exp\left(
        \int_{t_o}^t \left( b^\theta_h - \div c^\theta\right)
        \left(\tau,X^\theta (\tau;t,x)\right) \d\tau \right)
      \\
      & + \int_{t_o}^t q^\theta_h \left(s,X^\theta (s;t,x)\right)
      \exp\left( \int_s^t \left( b^\theta_h - \div c^\theta\right)
        \left(\tau,X^\theta (\tau;t,x)\right) \d\tau \right) \d{s}.
    \end{aligned}
  \end{equation}
  Compute the derivative of $X^\theta$ with respect to $\theta$,
  recalling that $X^\theta (t;t,x)=x$ for all $\theta$:
  \begin{displaymath}
    \left\{
      \begin{array}{l@{}}
        \partial_t \partial_\theta X^\theta (\tau;t,x) =
        c (\tau,X^\theta (\tau;t,x)) - \tilde{c}(\tau,X^\theta (\tau;t,x))
        +
        \nabla c^\theta (\tau,X^\theta (\tau;t,x)) \,
        \partial_\theta X^\theta (\tau;t,x)
        \\
        \partial_\theta X^\theta (t;t,x)=0.
      \end{array}
    \right.
  \end{displaymath}
  The solution to the above problem satisfies
  \begin{align}
    \nonumber
    \partial_\theta X^\theta (\tau;t,x) = \
    & \int_t^\tau \exp\left(
      \int_s^\tau \nabla c^\theta (\sigma,  X^\theta (\sigma;t,x)) \d\sigma
      \right)
      \left(c-\tilde{c}\right) (s,  X^\theta (s;t,x)) \d{s}
    \\
    \label{eq:10}
    = \
    &  \int_\tau^t \exp\left(
      \int_\tau^s - \nabla c^\theta (\sigma,  X^\theta (\sigma;t,x)) \d\sigma
      \right)
      \left(\tilde{c} - c\right) (s,  X^\theta (s;t,x)) \d{s}.
  \end{align}
  Derive~\eqref{eq:9} with respect to $\theta$:
  \begin{align*}
    & \partial_\theta u^\theta_h (t,x)
    \\
    = \
    & \exp\left(
      \int_{t_o}^t (b^\theta_h - \div c^\theta)
      (\tau, X^\theta (\tau;t,x))\d\tau\right)
      \biggl\{
      \nabla u_o^h  (X^\theta (t_o;t,x)) \; \partial_\theta X^\theta (t_o; t,x)
    \\
    & \quad
      + u_o^h (X^\theta (t_o;t,x)) \int_{t_o}^t \left(
      b_h - \tilde{b}_h - \div (c - \tilde{c})
      \right)(\tau, X^\theta (\tau;t,x))\d\tau
    \\
    & \quad \left.
      + u_o^h (X^\theta (t_o;t,x)) \int_{t_o}^t
      \nabla (b^\theta_h - \div c^\theta)(\tau, X^\theta (\tau;t,x))\;
      \partial_\theta X^\theta (\tau;t,x)
      \d\tau
      \right\}
    \\
    & +\int_{t_o}^t
      \exp\left(
      \int_s^t (b^\theta_h - \div c^\theta)
      (\tau, X^\theta (\tau;t,x))\d\tau\right)
    \\
    & \times \biggl\{(q_h - \tilde q_h) (x, X^\theta (s;t,x))
      + \nabla q_h^\theta (s, X^\theta (s;t,x))
      \; \partial_\theta X^\theta (s;t,x)
    \\
    & \qquad + q_h^\theta (s,X^\theta (s;t,x))
      \int_s^t \left(
      b_h - \tilde{b}_h - \div (c - \tilde{c})
      \right)(\tau, X^\theta (\tau;t,x))\d\tau
    \\
    & \qquad \left. + q_h^\theta (s,X^\theta (s;t,x))
      \int_s^t  \nabla (b^\theta_h - \div c^\theta)(\tau, X^\theta (\tau;t,x))
      \;  \partial_\theta X^\theta (\tau;t,x)
      \d\tau
      \right\} \d{s}
    \\
    \leq \
    & \exp\left(\int_{t_o}^t
      (b^\theta_h - \div c^\theta) (\tau, X^\theta (\tau;t,x))\d\tau\right)
      \Biggl\{
      \int_{t_o}^t (q_h - \tilde q_h) (x, X^\theta (s;t,x)) \d{s}
    \\
    & + \left(\nabla u_o^h  (X^\theta (t_o;t,x))
      + \int_{t_o}^t  \nabla q_h^\theta (\tau, X^\theta (\tau;t,x)) \d\tau
      \right)
    \\
    & \quad \times \int_{t_o}^t \exp\left(
      \int_{t_o}^s - \nabla c^\theta (\sigma,  X^\theta (\sigma;t,x)) \d\sigma
      \right)
      \left(\tilde{c} - c\right) (s,  X^\theta (s;t,x)) \d{s}
    \\
    & + \left( u_o^h (X^\theta (t_o;t,x))
      + \int_{t_o}^t  q_h^\theta (\tau,X^\theta (\tau;t,x)) \d\tau
      \right)
    \\
    & \times \Biggl[
      \int_{t_o}^t \left(
      b_h - \tilde{b}_h - \div (c - \tilde{c})
      \right)\!(\tau, X^\theta (\tau;t,x))\d\tau
    \\
    & \qquad +
      \int_{t_o}^t
      \nabla (b^\theta_h - \div c^\theta)(\tau, X^\theta (\tau;t,x))
    \\
    &\qquad \left.\left. \times \left[
      \int_\tau^t \exp\left(
      \int_\tau^s - \nabla c^\theta (\sigma,  X^\theta (\sigma;t,x)) \d\sigma
      \right)
      \left(\tilde{c} - c\right) (s,  X^\theta (s;t,x)) \d{s}
      \right] \d\tau\right]
      \right\},
  \end{align*}
  where we made use of~\eqref{eq:10}. Call $u_h$ and $\tilde{u}_h$ the
  functions defined by~\eqref{eq:9} for $\theta=0$ and $\theta=1$,
  that is $u_h =u_h^{\theta=0}$ and $\tilde{u}_h =u_h^{\theta=1}$.
  Compute
  \begin{equation}
    \label{eq:11}
    \norma{u_h (t) - \tilde{u}_h (t)}_{\L1 (\reali^n)}
    \leq
    \int_{\reali^n}
    \modulo{\int_0^1 \partial_\theta u^\theta_h (t,x) \d\theta}\d{x}
    \leq
    \int_0^1 \int_{\reali^n}
    \modulo{\partial_\theta u^\theta_h (t,x) } \d{x} \d\theta.
  \end{equation}
  Exploiting the change of variable introduced at the beginning of the
  proof, compute
  \begin{align*}
    & \int_{\reali^n} \modulo{\partial_\theta u^\theta_h (t,x) } \d{x}
    \\
    \leq \
    & \exp\left(
      \int_{t_o}^t \norma{b^\theta_h(\tau)}_{\L\infty (\reali^n)} \d\tau
      \right)
      \Biggl\{
      \int_{t_o}^t \norma{(q_h - \tilde q_h) (\tau)}_{\L1 (\reali^n)} \d\tau
    \\
    & + \left(\int_{\reali^n} \modulo{\nabla u_o^h (y)}\d{y}
      +
      \int_{t_o}^t \int_{\reali^n}
      \modulo{\nabla q_h^\theta (\tau,y)} \d{y}\d\tau
      \right)
    \\
    & \quad \times
      \exp\left(
      \int_{t_o}^t
      \norma{\nabla c^\theta (\sigma)}_{\L\infty (\reali^n; \reali^{n \times n})}
      \d\sigma
      \right)
      \int_{t_o}^t\norma{ (c-\tilde{c}) (s)}_{\L\infty (\reali^n;\reali^n)} \d{s}
    \\
    & \quad + \left( \norma{u_o^h}_{\L\infty (\reali^n)}
      + \int_{t_o}^t \norma{q^\theta_h (\tau)}_{\L\infty (\reali^n)} (\tau)\d\tau \right)
      \int_{t_o}^t
      \norma{(b_h - \tilde{b}_h
      - \div (c - \tilde{c})) (\tau)}_{\L1 (\reali^n)}\d\tau
    \\
    &\quad +
      \left( \norma{u_o^h}_{\L\infty (\reali^n)}
      + \int_{t_o}^t
      \norma{q^\theta_h(\tau)}_{\L\infty (\reali^n)} (\tau)\d\tau \right)
      \int_{t_o}^t \norma{\nabla (b^\theta_h
      - \div c^\theta) (\tau)}_{\L1 (\reali^n; \reali^n)}\d{s}
    \\
    & \qquad \left.
      \times  \exp \left(
      \int_{t_o}^t
      \norma{\nabla c^\theta (\sigma)}_{\L\infty (\reali^n;\reali^{n \times n})}
      \d\sigma
      \right)
      \int_{t_o}^t
      \norma{(c-\tilde{c}) (s)}_{\L\infty (\reali^n; \reali^n)}\d{s}
      \right\}.
  \end{align*}
  Inserting the result above in~\eqref{eq:11}, by the definitions of
  $b_h^\theta$, $q_h^\theta$ and their properties as stated in
  Lemma~\ref{lem:approx}, we have
  \begin{align*}
    & \norma{u_h (t) - \tilde{u}_h (t)}_{\L1 (\reali^n)}
    \\
    \leq \
    & \exp\left(
      \int_{t_o}^t \max
      \left\{
      \norma{b(\tau)}_{\L\infty (\reali^n)},
      \norma{\tilde b(\tau)}_{\L\infty (\reali^n)}
      \right\}
      \d\tau\right)
      \Biggl\{
      \int_{t_o}^t \norma{(q_h - \tilde q_h) (\tau)}_{\L1 (\reali^n)}\d\tau
    \\
    & + \left(
      \norma{u_o^h}_{\L\infty (\reali^n)}
      + \int_{t_o}^t  \max
      \left\{
      \norma{q(\tau)}_{\L\infty (\reali^n)},
      \norma{\tilde q(\tau)}_{\L\infty (\reali^n)}
      \right\}\d\tau
      \right)
    \\
    & \times
      \int_{t_o}^t
      \norma{(b_h - \tilde{b}_h - \div (c - \tilde{c})) (\tau)}_{\L1 (\reali^n)}
      \d\tau
    \\
    & + \exp\left(
      \int_{t_o}^t \max\left\{
      \norma{\nabla c (s)}_{\L\infty (\reali^n; \reali^{n \times n})},
      \norma{\nabla \tilde{c} (s)}_{\L\infty (\reali^n; \reali^{n \times n})}
      \right\}\d{s}
      \right)
    \\
    & \times
      \int_{t_o}^t  \norma{(c-\tilde{c}) (s)}_{\L\infty (\reali^n; \reali^n)}\d{s}
    \\
    & \quad \times \biggl[
      \int_{\reali^n}\modulo{\nabla u_o^h (y)} \d{y}
      + \int_{t_o}^t \max\left\{
      \norma{\nabla q_h (s)}_{\L1 (\reali^n)}, \,
      \norma{\nabla \tilde q_h (s)}_{\L1 (\reali^n)}
      \right\} \d{s}
    \\
    & \qquad
      +
      \left( \norma{u_o^h}_{\L\infty (\reali^n)} +
      \int_{t_o}^t  \max
      \left\{
      \norma{q(\tau)}_{\L\infty (\reali^n)},
      \norma{\tilde q(\tau)}_{\L\infty (\reali^n)}
      \right\}\d\tau
      \right)
    \\
    & \qquad \quad \left.\left. \times
      \int_{t_o}^t
      \max \left\{
      \norma{\nabla (b_h - \div c) (s)}_{\L1 (\reali^n; \reali^n)},
      \norma{\nabla (\tilde{b}_h - \div \tilde{c}) (s)}_{\L1 (\reali^n; \reali^n)}
      \right\}\d{s}
      \right]\right\} \,.
  \end{align*}
  Let now $h$ tend to $+ \infty$. We have:
  \begin{align*}
    \norma{u_h (t) - \tilde{u}_h (t)}_{\L1 (\reali^n)}
    \to \
    & \norma{u (t) - \tilde{u} (t)}_{\L1 (\reali^n)}
    \\
    \norma{u_o^h}_{\L\infty (\reali^n)}
    \leq \
    &\norma{u_o}_{\L\infty (\reali^n)}
    & &\mbox{by~\eqref{eq:6}}
    \\
    \norma{(q_h - \tilde{q}_h) (\tau)}_{\L1 (\reali^n)}
    \to \
    & \norma{(q - \tilde q) (\tau)}_{\L1 (\reali^n)}
    & &\mbox{by Lemma~\ref{lem:approx}}
    \\
    \norma{(b_h - \tilde{b}_h - \div (c - \tilde{c})) (\tau)}_{\L1 (\reali^n)}
    \to \
    & \norma{
      (b - \tilde{b} - \div (c - \tilde{c})) (\tau)
      }_{\L1 (\reali^n)}
    & &\mbox{by Lemma~\ref{lem:approx}}
    \\
    \norma{ \nabla u_o^h}_{\L1 (\reali^n; \reali^n)}
    \to \
    &\tv(u_o)
    & &\mbox{by~\eqref{eq:6}}
    \\
    \norma{\nabla b_h (s)}_{\L1 (\reali^n; \reali^n)}
    \leq \
    & \tv \left(b(s)\right)
    & &\mbox{by Lemma~\ref{lem:approx}}
    \\
    \norma{\nabla \tilde{b}_h (s)}_{\L1 (\reali^n; \reali^n)}
    \leq \
    & \tv \left(\tilde{b}(s)\right)
    & &\mbox{by Lemma~\ref{lem:approx}}
    \\
    \norma{\nabla q_h (s)}_{\L1 (\reali^n; \reali^n)}
    \leq \
    & \tv \left(q(s)\right)
    & &\mbox{by Lemma~\ref{lem:approx}}
    \\
    \norma{\nabla \tilde{q}_h (s)}_{\L1 (\reali^n; \reali^n)}
    \leq \
    & \tv \left(\tilde{q}(s)\right)
    & & \mbox{by Lemma~\ref{lem:approx}}
  \end{align*}
  Therefore,
  \begin{align}
    \label{eq:3}
    & \norma{u (t) - \tilde{u} (t)}_{\L1 (\reali^n)}
    \\
    \nonumber
    \leq \
    & \exp\left(
      \int_{t_o}^t \max
      \left\{
      \norma{b(\tau)}_{\L\infty (\reali^n)},
      \norma{\tilde b(\tau)}_{\L\infty (\reali^n)}
      \right\}\d\tau\right)
      \Biggl\{
      \int_{t_o}^t \norma{(q - \tilde q) (\tau)}_{\L1 (\reali^n)}\d\tau
    \\
    \nonumber
    & + \left(
      \norma{u_o}_{\L\infty (\reali^n)}
      + \int_{t_o}^t  \max
      \left\{
      \norma{q(\tau)}_{\L\infty (\reali^n)},
      \norma{\tilde q(\tau)}_{\L\infty (\reali^n)}
      \right\}\d\tau
      \right)
    \\ \nonumber
    & \qquad \times
      \int_{t_o}^t
      \norma{(b - \tilde{b} - \div (c - \tilde{c})) (\tau)}_{\L1 (\reali^n)}
      \d\tau
    \\
    \nonumber
    & + \exp\left(
      \int_{t_o}^t \max\left\{
      \norma{\nabla c (s)}_{\L\infty (\reali^n; \reali^{n \times n})},
      \norma{\nabla \tilde{c} (s)}_{\L\infty (\reali^n; \reali^{n \times n})}
      \right\}\d{s}
      \right)
    \\ \nonumber
    & \qquad\times
      \int_{t_o}^t  \norma{(c-\tilde{c}) (s)}_{\L\infty (\reali^n; \reali^n)}\d{s}
    \\
    \nonumber
    &  \times \biggl[
      \tv (u_o)
      + \int_{t_o}^t \max\left\{
      \tv \left(q(s)\right), \,
      \tv \left( \tilde q (s)\right)
      \right\} \d{s}
    \\
    \nonumber
    & \quad
      +
      \left( \norma{u_o}_{\L\infty (\reali^n)} +
      \int_{t_o}^t  \max
      \left\{
      \norma{q(\tau)}_{\L\infty (\reali^n)},
      \norma{\tilde q(\tau)}_{\L\infty (\reali^n)}
      \right\}\d\tau
      \right)
    \\
    \nonumber
    & \quad  \left.\left. \times
      \int_{t_o}^t\!
      \max\left\{ \tv\left(b (s) \right) +
      \norma{\nabla \div c (s)}_{\L1 (\reali^n; \reali^n)},
      \tv\left(\tilde b (s) \right)
      + \norma{\nabla \div \tilde{c} (s)}_{\L1 (\reali^n; \reali^n)}
      \right\}\d{s}
      \right]\!\!\right\}.
  \end{align}
  This completes the proof.
\end{proof}

\subsection{Proof of the Main Result}
\label{subs:Main}

\begin{proofof}{Theorem~\ref{thm:main}}
  Choose an initial datum $(u_o, w_o) \in \mathcal{X}^+$. Define
  $u_0 (t,x) = u_o (x)$ and $w_0 (t,x) = w_o (x)$ for
  $(t,x) \in I \times \reali^n$. Then, construct recursively for
  $i=1, 2, \ldots$ the following sequences of functions:
  \begin{equation}
    \label{eq:15}
    \!\!\!\!\!\!\!\!\!\!\!\!
    \begin{array}{@{}r@{}c@{}l@{}}
      a_{i} (t,x)
      & =
      & g\!\left(t, x, u_{i-1} (t,x), w_{i-1} (t,x)\right)\!;
      \\[4pt]
      b_{i} (t,x)
      & =
      & f\!\left(t, x, w_{i-1} (t,x)\right)\!;
      \\[4pt]
      c_{i} (t,x)
      & =
      & \left(v \left(t, w_{i-1} (t)\right)\right) (x);
    \end{array}
    \begin{array}{@{\,}l@{\,}l@{}l@{}}
      u_{i}
      & \mbox{solves }
      & \left\{
        \begin{array}{@{\,}l@{}}
          \partial_t u_{i}
          +
          \div \left(c_{i} (t,x) u_{i}\right) = b_{i} (t,x) u_{i} + q (t,x)
          \\
          u_{i} (t_o,x) = u_o (x);
        \end{array}
      \right.
      \\
      w_{i}
      & \mbox{solves }
      & \left\{
        \begin{array}{@{\,}l@{}}
          \partial_t w_{i} - \mu \, \Delta w_{i} = a_{i} (t,x) w_{i}
          \\
          w_{i} (t_o,x) = w_o (x).
        \end{array}
      \right.
    \end{array}
    \!\!\!\!\!\!\!\!\!\!\!\!\!\!\!\!\!\!\!\!\!\!\!\!
  \end{equation}
  The existence part of the proof amount to verify that $(u_i,w_i)$ is
  a Cauchy sequence in a suitable complete metric space and that its
  limit solves~\eqref{eq:1}. We divide the proof into several steps.

  \paragraph{Step 0:} For all $i \in \naturali$, $(u_i, w_i)$ is well
  defined and
  \begin{equation}
    \label{eq:18}
    \begin{array}{rl@{\quad\mbox{ and }\quad}l}
      \mbox{for all } t \in I
      & u_i (t) \in\mathcal{U}^+
      &  u_i \in \C{0,1} (I;\L1 (\reali^n; \reali_+)),
      \\
      \mbox{for all } t \in I
      & w_i (t) \in \mathcal{U}^+
      & w_i \in \C0 (I;\L1 (\reali^n; \reali_+)).
    \end{array}
  \end{equation}

  \paragraph{Proof of Step~0:}
  For $i=0$, the thesis holds true due to the choice of the initial
  data and the definition of $u_0$ and $w_0$.  We proceed by
  induction.

  Assume now that the claim holds for $i-1$, with $i \geq 1$. Then,
  $a_i \in \L\infty (I\times\reali^n; \reali)$ for all $t\in I$,
  by~\ref{g} and by the inductive
  hypothesis. Proposition~\ref{prop:para}, Proposition~\ref{prop:PBV}
  and Corollary~\ref{cor:para} hence ensure that $w_i$ is well
  defined, with $w_i (t) \in \mathcal{U}^+$ for all $t\in I$.
  Similarly, $b_i$ satisfies~\ref{b*} by~\ref{f} and $c_i$
  satisfies~\ref{c*} by~\ref{v}. An application of
  Proposition~\ref{prop:hyper} ensures the existence of $u_i$, with
  $u_i (t) \in \mathcal{U}^+$ for all $t \in I$. The time regularity
  of $w_i$ follows from~\ref{it:P:timereg} in
  Proposition~\ref{prop:para} and, for $u_i$, from~\ref{item:uLipt} in
  Proposition~\ref{prop:hyper}.

  \medskip

  \paragraph{Step 1:} For all $i \in \naturali$, for all $t\geq t_o$
  \begin{align}
    \label{eq:19}
    \norma{w_i (t)}_{\L1 (\reali^n)}
    \leq \ &
             \norma{w_o}_{\L1 (\reali^n)} \; e^{K_g (t) \, (t-t_o)},
    &
      \norma{w_i (t)}_{\L\infty (\reali^n)}
      \leq \ &
               \norma{w_o}_{\L\infty (\reali^n)} \; e^{K_g (t) \, (t-t_o)},
  \end{align}
  \begin{equation}
    \label{eq:20}
    \begin{aligned}
      \norma{u_i (t)}_{\L1 (\reali^n)} \leq \ & \left(\norma{u_o}_{\L1
          (\reali^n)} +\norma{q}_{\L1 ([t_o,t]\times\reali^n)}\right)
      \\
      & \times \exp \left[ K_f (t) \, (t-t_o) \left( 1 +
          \norma{w_o}_{\L\infty (\reali^n)} e^{K_g (t) \, (t-t_o)}
        \right) \right],
    \end{aligned}
  \end{equation}
  \begin{equation}
    \label{eq:21}
    \begin{aligned}
      \norma{u_i (t)}_{\L\infty (\reali^n)} \leq \ & \left(
        \norma{u_o}_{\L\infty (\reali^n)} + \norma{q}_{\L1 ([t_o,t];
          \L\infty(\reali^n))}\right)
      \\
      & \times \exp \left[ \left(K_f (t) + K_v (t)\right) (t-t_o)
        \left( 1 + \norma{w_o}_{\L\infty (\reali^n)} e^{K_g (t) \,
            (t-t_o)} \right) \right] .
    \end{aligned}
  \end{equation}

  \noindent (The $\L1$ and $\L\infty$ estimates on $w$ are
  \emph{independent} of $u$. This fact plays a key role throughout, in
  particular in \textbf{Step~6} below.)

  \paragraph{Proof of Step~1:} By~\ref{g} and~\eqref{eq:15}, with
  obvious notation, for all $\tau \in [t_o,t]$,
  \begin{displaymath}
    A_i (\tau)
    : =
    \sup_{\xi \in \reali^n} a_i (\tau,\xi)
    =
    \sup_{\xi \in \reali^n}
    g(\tau,\xi,u_{i-1} (\tau,\xi) , w_{i-1} (\tau,\xi))
    \leq
    K_g(\tau) \leq
    K_g(t) \,.
  \end{displaymath}
  Hence, \eqref{eq:19} follows by~\ref{it:P:priori} in
  Corollary~\ref{cor:para}.

  Proceeding now similarly, using~\eqref{eq:15}, \ref{f}
  and~\eqref{eq:19}, compute for $\tau \in [t_o,t]$,
  \begin{align*}
    \sup_{x \in \reali^n} b_i (\tau,x)
    = \ &
          \sup_{x \in \reali^n}
          f \left(\tau,x, w_{i-1} (\tau,x)\right)
          \leq
          \sup_{x \in \reali^n} K_f (\tau) \left(1+w_{i-1} (\tau,x)\right)
    \\
    \leq \
        & K_f (\tau) \left(1+\norma{w_i (\tau)}_{\L\infty (\reali^n)}\right)
          \leq
          K_f (t)
          \left(1+\norma{w_o}_{\L\infty (\reali^n)} \;
          e^{K_g (t) \, (t-t_o)}\right) \,.
  \end{align*}
  Estimate~\eqref{eq:20} now follows from~\ref{item:uL1} in
  Proposition~\ref{prop:hyper} and~\eqref{eq:15}. Moreover,
  by~\ref{v},
  \begin{displaymath}
    \norma{\div c (\tau)}_{\L\infty (\reali^n; \reali)}
    \leq
    K_v (\tau) \, \norma{w_{i-1} (\tau)}_{\L\infty (\reali^n)}
    \leq
    K_v (t) \, \norma{w_o}_{\L\infty (\reali^n)} \; e^{K_g (t) \, (t-t_o)}.
  \end{displaymath}
  Using now~\ref{item:uLinf} in Proposition~\ref{prop:hyper}
  and~\eqref{eq:15}, the bound~\eqref{eq:21} follows.

  \medskip

  \paragraph{Step 2:} There exists $\mathcal{G} \in \C0 (I;\reali_+)$
  such that for all $t \in I$ and $i \in \naturali$,
  $\tv (w_i (t)) \leq \mathcal{G} (t)$.

  \paragraph{Proof of Step~2:} By the definition of $a_i$ given
  in~\eqref{eq:15}, by~\ref{g} and by~\ref{item:tvwPOS} in
  Corollary~\ref{cor:para} we obtain
  $ \tv (w_i (t)) \leq \mathcal{G} (t)$ where
  \begin{displaymath}
    \mathcal{G} (t) =
    \tv (w_o)
    +
    \frac{2 \, J_n\, \sqrt{t-t_o}}{\sqrt{\mu}} \,
    K_g (t) \,
    \norma{w_o}_{\L\infty (\reali^n)} \,
    e^{K_g (t)\, (t-t_o)}.
  \end{displaymath}

  \medskip

  \paragraph{Step 3:} There exists $\mathcal{F} \in \C0 (I;\reali_+)$
  such that, for all $t \in I$ and all $i \in \naturali$,
  $\tv (u_i (t)) \leq \mathcal{F} (t)$.

  \paragraph{Proof of Step~3:}
  Exploiting the definitions of $b_i$ and $c_i$ given
  in~\eqref{eq:15}, by~\ref{v}, for $\tau \in [t_0,t]$,
  \begin{align*}
    \norma{\nabla c_i (\tau)}_{\L\infty (\reali^n;\reali^{n \times n})}
    \leq \
    & K_v (\tau) \norma{w_{i-1} (\tau)}_{\L\infty (\reali^n)}
      \leq
      K_v (\tau) \, \norma{w_o}_{\L\infty (\reali^n)} \,
      e^{K_g (\tau) \, (\tau-t_o)},
    \\
    \norma{\nabla \div c_i (\tau)}_{\L1 (\reali^n; \reali^n)}
    \leq \
    & C_v \left(\tau, \norma{w_{i-1} (\tau)}_{\L1 (\reali^n)}\right)
      \norma{w_{i-1} (\tau)}_{\L1 (\reali^n)}
    \\
    \leq \
    & C_v \left(
      \tau, \norma{w_o}_{\L1 (\reali^n)} \, e^{K_g (\tau) \, (\tau-t_o)}
      \right)
      \norma{w_o}_{\L1 (\reali^n)} e^{K_g (\tau) \, (\tau-t_o)}.
  \end{align*}
  and 
  by~\ref{f}, \eqref{eq:19} and \textbf{Step~2},
  \begin{align*}
    \tv (b_i (\tau))
    = \
    & \tv \left(f (\tau, \cdot, w_{i-1} (\tau,\cdot))\right)
    \\
    \leq \
    & K_f (\tau)
      \left(
      1
      +
      \norma{w_{i-1} (\tau)}_{\L\infty (\reali^n)}
      +
      \tv \left(w_{i-1} (\tau)\right)
      \right)
    \\
    \leq \
    & K_f (\tau)
      \left(
      1
      +
      \tv (w_o)
      +
      \norma{w_o}_{\L\infty (\reali^n)}
      \left(
      1
      +
      \frac{2 \, J_n\, \sqrt{\tau-t_o}}{\sqrt{\mu}} \,
      K_g (\tau)
      \right)
      e^{K_g (\tau) \, (\tau-t_o)}
      \right) .
  \end{align*}
  Insert the latter estimates above in~\ref{item:utv} of
  Proposition~\ref{prop:hyper} to get
  $\tv (u_i (t)) \leq\mathcal{F} (t)$, where
  \begin{align*}
    \mathcal{F} (t) = \
    & \left(\norma{u_o}_{\L\infty (\reali^n)} + \tv (u_o)
      + \int_{t_o}^t \left(
      \norma{q (\tau)}_{\L\infty (\reali^n)}+ \tv\left(q (\tau)\right)
      \right) \d\tau
      \right)
    \\
    & \times
      \exp\left( \int_{t_o}^t
      \left(
      K_f (\tau) + \norma{w_o}_{\L\infty (\reali^n)}
      \left(K_f (\tau) + K_v (\tau)\right)
      e^{K_g (\tau) (\tau-t_o)}
      \right)
      \d\tau
      \right)
    \\
    & \quad \times
      \left(
      1
      +
      \int_{t_o}^t C_v \left(\tau, \norma{w_o}_{\L1 (\reali^n)}
      e^{K_g (\tau) (\tau-t_o)}\right)
      \norma{w_o}_{\L1 (\reali^n)} \, e^{K_g (\tau) (\tau-t_o)} \d\tau
      \right.
    \\
    &   \qquad
      \left.
      +
      K_f (t) (t-t_o)
      \left(
      1
      +
      \tv (w_o)
      +
      \frac{4\, J_n}{3\, \sqrt{\mu}} \, \sqrt{t-t_o} \,
      K_g (t) \, \norma{w_o}_{\L\infty (\reali^n)} \,
      e^{K_g (t)(t-t_o)}
      \right)
      \right)
  \end{align*}
  concluding the proof of \textbf{Step~3}.

  \medskip

  Observe for later use that, due to~\ref{f}, \ref{g} and~\ref{v}, on
  a bounded time interval $[t_o,T]$
  \begin{equation}
    \label{eq:4}
    a_{i+1} - a_i, \
    b_{i+1} - b_i, \
    \div (c_{i+1} - c_i)
    \in  \L1 ([t_o,T] \times\reali^n;\reali) \,.
  \end{equation}

  \medskip

  \paragraph{Step 4:} Referring to~\eqref{eq:X}, \eqref{eq:normX},
  \textbf{Step~2} and~\textbf{Step~3}, consider the complete metric
  space
  \begin{align}
    \nonumber
    & \mathcal{Y}_T
      =
      \left\{
      (u,w) \in \C0 ([t_o,T]; \mathcal{X}^+) \colon
      \tv (u (t))\leq \mathcal{F} (t) \mbox{ and }
      \tv (w (t)) \leq \mathcal{G} (t)
      \mbox{ for all } t \in [t_o,T]
      \right\},
    \\
    \label{eq:Y}
    & d \left((u_1,w_1), (u_2,w_2)\right)
      =
      \sup_{t\in[t_o,T]}
      \norma{(u_1 (t) -u_2 (t), \, w_1 (t) - w_2 (t))}_{\mathcal{X}}.
  \end{align}
  Moreover, for $r>0$ introduce the following subset of
  $\mathcal{X}^+$:
  \begin{equation}
    \label{eq:38}
    \mathcal{X}^+_r = \left\{
      (u,w) \in \mathcal{X}^+ \colon
      \begin{array}{r@{\,}c@{\,}l@{\qquad}r@{\,}c@{\,}l@{\qquad}r@{\,}c@{\,}l}
        \norma{u}_{\L\infty (\reali^n)}
        & \leq & r,
        & \tv (u)
        & \leq
        & r,
        \\
        \norma{w}_{\L\infty (\reali^n)}
        & \leq
               & r,
        & \norma{w}_{\L1 (\reali^n)}
        & \leq
        & r,
        & \tv (w)
               & \leq
        & r
      \end{array}
    \right\} .
  \end{equation}
  Then, given $(u_o, w_o) \in \mathcal{X}^+_r$, there exists a
  continuous function $\mathcal{K}_r \colon [t_o, T] \to \reali_+$,
  for a suitable $T \in I$ with $T>t_o$, such that for all
  $i\in \naturali$
  \begin{equation}
    \label{eq:24}
    d \left((u_{i+1},w_{i+1}), (u_i, w_i)\right)  \leq
    \mathcal{K}_r (T) \,(T-t_o) \; d \! \left((u_i,w_i), (u_{i-1}, w_{i-1})\right).
  \end{equation}

  \paragraph{Proof of Step~4:} In the following, we make use of the
  bounds~\eqref{eq:19}--\eqref{eq:21}. Start from~\ref{it:P:stab} in
  Corollary~\ref{cor:para}: for all $t \in [t_o,T]$,
  using~\eqref{eq:15} and~\ref{g}, we obtain
  \begin{align}
    \nonumber
    & \norma{w_{i+1} (t) - w_i (t)}_{\L1 (\reali^n)}
    \\ \nonumber
    \leq \
    &  K_g (t) \, (t-t_o) \, \norma{w_o}_{\L\infty (\reali^n)}
      e^{2 \, (t-t_o) K_g(t)}
      \sup_{\tau \in [t_o,t]} \norma{ (u_i (\tau) -u_{i-1}
      (\tau), \, w_i (\tau) - w_{i-1} (\tau))  }_{\mathcal{X}}
    \\
    \label{eq:22}
    \leq \
    & \mathcal{K}^w_r (T) \, (T-t_o) \;
      d\!\left((u_i,w_i), (u_{i-1}, w_{i-1})\right),
  \end{align}
  with
  \begin{equation}
    \label{eq:Kw}
    \mathcal{K}_r^w (T) = r \, K_g (T) \, e^{2 \, (T-t_o) K_g(T)}.
  \end{equation}
  Now consider~\ref{item:ustab} in Proposition~\ref{prop:hyper}:
  by~\ref{v} and~\ref{f}, setting
  \begin{align}
    \label{eq:O1tw}
    \mathcal{\tilde{O}}_1 (t) = \
    & \exp\left( K_f (t) (t-t_o)
      +
      \norma{w_o}_{\L\infty (\reali^n)} \,  (t-t_o)
      \left(K_f (t) + K_v (t)\right) e^{K_g (t) \, (t-t_o)}
      \right)
    \\
    \nonumber
    & \times \bigg[
      1
      +
      (t-t_o) \, C_v (t, \norma{w_o}_{\L1 (\reali^n)} e^{K_g (t) (t-t_o)})
      \norma{w_o}_{\L1 (\reali^n)} e^{K_g (t) (t-t_o)}
    \\
    \nonumber
    & \quad
      +
      K_f (t) (t-t_o)
      \left(
      1
      +
      \tv (w_o)
      +
      \frac{4\, J_n}{3\, \sqrt{\mu}} \, \sqrt{t-t_o} \,
      \norma{w_o}_{\L\infty (\reali^n)} \,
      K_g (t) \, e^{K_g (t) (t-t_o)}
      \right)
      \bigg],
    \\
    \label{eq:O2tw}
    \mathcal{\tilde{O}}_2 (t) = \
    & \exp
      \left(
      K_f (t) \, (t-t_o)
      \left(
      1
      +
      \norma{w_o}_{\L\infty (\reali^n)} e^{K_g (t) (t-t_o)}\right)
      \right),
  \end{align}
  we get
  \begin{align}
    & \nonumber
      \norma{u_{i+1} (t) - u_i (t)}_{\L1 (\reali^n)}
    \\ \nonumber
    \leq \
    &\left[ \mathcal{\tilde{O}}_1 (t)
      (t-t_o)
      \left(
      \norma{u_o}_{\L\infty (\reali^n)}
      +
      \tv (u_o)
      +
      \int_{t_o}^t \left(
      \norma{q (\tau)}_{\L\infty (\reali^n)}
      +
      \tv\left(q (\tau)\right)
      \d\tau
      \right)
      \right)
      K_v (t) \right.
    \\ \nonumber
    & \left. + \mathcal{\tilde{O}}_2 (t) (t-t_o)\!
      \left(\!
      \norma{u_o}_{\L\infty (\reali^n)}
      +
      \int_{t_o}^t \norma{q (\tau)}_{\L\infty (\reali^n)}
      \d\tau\!
      \right)\!\!
      \left(\!
      K_f (t)
      +
      C_v\left(t, \norma{w_o}_{\L\infty} e^{K_g (t) (t-t_o) }\right)\right)\!\right]
    \\
    \nonumber
    & \times \sup_{\tau \in [t_o,t]}
      \norma{w_i (\tau) - w_{i-1} (\tau)}_{\L1 (\reali^n)}
    \\
    \label{eq:23}
    \leq \
    & \mathcal{K}_r^u (T) \, (T-t_o)
      \sup_{\tau \in [t_o,T]} \norma{w_i (\tau) - w_{i-1} (\tau)}_{\L1 (\reali^n)},
  \end{align}
  with
  \begin{align}
    \nonumber
    \mathcal{K}_r^u (T) = \
    & \left(
      r
      +
      \int_{t_o}^T \norma{q (\tau)}_{\L\infty (\reali^n)} \d\tau
      \right)
      \exp\left(K_f(T) (T-t_o) \left(1 + r \, e^{K_g (T) (T-t_o)}\right)\right)
    \\
    \nonumber
    & \qquad
      \times
      \left[
      K_f (T) + C_v \left(T,r \,  e^{K_g (T) (T-t_o) }\right)
      \right]
    \\
    \nonumber
    & +
      \left(
      2 \, r
      +
      \int_{t_o}^T
      \left(
      \norma{q (\tau)_{\L\infty (\reali^n)}}
      +
      \tv\left(q (\tau)\right)  \right)
      \d\tau
      \right)
    \\
    \label{eq:Ku}
    & \times \exp\left(K_f(T) (T-t_o) \left(1 + r \, e^{K_g (T) (T-t_o)}\right)\right)
    \\ \nonumber
    & \quad \times K_v (T)
      \exp
      \left(K_v (T) \, (T-t_o) \, r \, e^{K_g (T) (T-t_o)}\right)
    \\ \nonumber
    & \quad
      \times \biggl(
      1 +
      r \, (T-t_o) \, C_v (T, r \, e^{K_g (T) (T-t_o)}) e^{K_g (T) (T-t_o)}
    \\ \nonumber
    & \qquad
      \left.
      +
      (T-t_o) \, K_f (T)
      \left(
      1
      +
      r
      +
      \frac{4\, J_n}{3\, \sqrt{\mu}} \, K_g (T) \, \sqrt{T-t_o} \, r
      \, e^{K_g (T) (T-t_o)}
      \right)
      \right).
  \end{align}
  Thus, collecting together~\eqref{eq:22} and~\eqref{eq:23},
  \begin{align*}
    d \left((u_{i+1},w_{i+1}), (u_i,w_i)\right)
    = \
    & \sup_{t \in  [0,T]}\left(\norma{u_{i+1} (t) -u_i (t)}_{\L1 (\reali^n)}
      +\norma{w_{i+1} (t) -w_i (t)}_{\L1 (\reali^n)}\right)
    \\
    \leq \
    & \left(\mathcal{K}_r^u (T) + \mathcal{K}_r^w (T)\right)\, (T-t_o) \,
      d\left((u_i,w_i), (u_{i-1},w_{i-1})\right).
  \end{align*}
  This proves~\eqref{eq:24}, with
  $\mathcal{K}_r (T) =\mathcal{K}_r^u (T) + \mathcal{K}_r^w (T)$
  and~\textbf{Step~4} is completed.

  \medskip

  \paragraph{Step 5:} For any $r > 0$, there exists a $T_r > 0$ such
  that for all $(u_o, w_o) \in \mathcal{X}_r^+$, the sequence
  $(u_i, w_i)$ converges in $\mathcal{Y}_{T_r}$ to a $(u_*,w_*)$
  solving~\eqref{eq:1} in the sense of Definition~\ref{def:sol}.

  \paragraph{Proof of Step~5:} Choose $T_r > t_o$ such that
  $\mathcal{K}_r(T_r) \, (T_r-t_o) < 1$. Thanks to~\eqref{eq:24}, the
  sequence $(u_i , w_i )$ defined through~\eqref{eq:15} is a Cauchy
  sequence and converges in the complete metric space
  $(\mathcal{Y}_{T_r},d)$ defined in~\eqref{eq:Y}. Call $(u_*, w_*)$
  the limit. Clearly, $u_* \in \C0 ([t_o, T_r]; \mathcal{U}^+)$ and
  $ w_* \in \C0 ([t_o,T_r]; \mathcal{U}^+)$. It remains to prove that
  $(u_*, w_*)$ is a solution to~\eqref{eq:1} in the sense of
  Definition~\ref{def:sol}. By Lemma~\ref{lem:paraSol} and
  Lemma~\ref{lem:hypSol} it is sufficient to prove that $u_*$ is a
  weak solution to~\eqref{eq:hyp} and $w_*$ is a weak solution
  to~\eqref{eq:par} with
  \begin{displaymath}
    a (t,x)
    =
    g\left(t, x, u_* (t,x), w_* (t,x)\right) \,,
    \quad
    b (t,x)
    =
    f\left(t, x, w_* (t,x)\right) \,,
    \quad
    c (t,x)
    =
    \left(v\left(t, w_* (t)\right)\right) \!(x) .
  \end{displaymath}
  The initial condition is satisfied: $(u_*,w_* )(0) = (u_o ,
  w_o)$. Using the weak formulations~\eqref{eq:Hsol}
  and~\eqref{eq:Psol}, applying the Dominated Convergence Theorem,
  thanks to~\ref{f} and~\ref{g}, we obtain that $(u_*, w_*)$
  solves~\eqref{eq:1} on $[t_o, T_r]$, with initial datum
  $(u_o , w_o )$, in the sense of Definition~\ref{def:sol}.

  \medskip

  \paragraph{Step~6:} The solution constructed above can be uniquely
  extended to all $I$.

  \paragraph{Proof of Step~6:} The uniform continuity in time of
  $(u_*,w_*)$ on $[t_o, T_r]$ ensures that
  $\left(u_* (T_r), w_* (T_r)\right) = \lim_{t \to T_r -} \left(u_*
    (t), w_* (t)\right)$ is in $\mathcal{X}^+$. The above results can
  be iteratively applied, proving that $(u_*,w_*)$ can be uniquely
  extended to a maximal time interval $\left[t_o, T_*\right[$.

  The $\L1$ and $\L\infty$ bounds in~\eqref{eq:19}, together with the
  $\BV$ bound in~\textbf{Step~2}, ensure that the limit
  $\lim_{t \to T_*-} w_* (t)$ exists and is in $\mathcal{U}^+$, so
  that we can define $w_* (T_*) = \lim_{t \to T_*-} w_*
  (t)$. Similarly, Proposition~\ref{prop:hyper}, allows to uniquely
  extend $u_*$ in $T_*$, setting
  $u_* (T_*) = \lim_{t \to T_*-} u_* (t)$ with
  $u_* (T_*) \in \mathcal{U}^+$. A further application of the steps
  above then allows to further prolong $(u_*, w_*)$ beyond time $T_*$,
  unless $T_* = \sup I$, completing the proof of this step.

  \medskip

  \paragraph{Step~7:} Let $r>0$. Given
  $(u_o,w_o), \, (\tilde u_o, \tilde w_o) \in \mathcal{X}^+_r$, call
  $(u,w)$ and $(\tilde u, \tilde w)$ the corresponding solutions
  to~\eqref{eq:1}. Then, for all $t \in I$, \eqref{eq:37} holds, with
  $\mathcal{C}_o$ defined in~\eqref{eq:36}.

  \paragraph{Proof of Step~7: } Define for
  $(t,x) \in I \times \reali^n$ the following functions
  \begin{align}
    \nonumber
    a (t,x) = \
    & g \left(t,x,u (t,x), w (t,x) \right),
    &
      \tilde a (t,x) = \
    & g \left(t,x, \tilde u (t,x), \tilde w (t,x) \right),
    \\
    \label{eq:utile}
    b (t,x) = \
    & f\left(t, x, w (t,x) \right),
    &
      \tilde b (t,x) = \
    & f \left(t, x, \tilde w (t,x) \right),
    \\
    \nonumber
    c (t,x) = \
    & \left(v\left(t, w (t)\right)\right) (x),
    &
      \tilde c (t,x) = \
    & \left(v\left(t, \tilde w (t)\right)\right) (x).
  \end{align}
  Let $\hat w$ be the solution to~\eqref{eq:par} with $a$ in the
  source term and initial datum $\tilde w_o$, and let $\hat u$ be the
  solution to~\eqref{eq:hyp} with coefficients $b, c$ and initial
  datum $\tilde u_o$. More precisely,
  \begin{equation}
    \label{eq:par_hyp}
    \left\{
      \begin{array}{l}
        \partial_t \hat w - \mu \, \Delta \hat w = a (t,x) \,\hat w
        \\
        \hat w(t_o,x) = \tilde w_o (x)
      \end{array}
    \right.
    \quad \mbox{ and } \quad
    \left\{
      \begin{array}{l}
        \partial_t \hat u +\div (c (t,x) \, \hat u)
        = b (t,x) \, \hat u + q (t,x)
        \\
        \hat u(t_o,x) = \tilde u_o (x).
      \end{array}
    \right.
  \end{equation}
  By~\eqref{eq:normX}, we need to compute
  \begin{align}
    \nonumber
    \norma{(u (t), w(t))  - (\tilde u (t), \tilde w (t))}_{\mathcal{X}}
    = \
    &   \norma{u (t) - \tilde u (t)}_{\L1 (\reali^n)}
      +\norma{w (t) - \tilde w (t)}_{\L1 (\reali^n)}
    \\
    \label{eq:25}
    \leq \
    &  \norma{u (t) - \hat u (t)}_{\L1 (\reali^n)}
      + \norma{\hat u (t) - \tilde u (t)}_{\L1 (\reali^n)}
    \\
    \label{eq:26}
    &+ \norma{w (t) - \hat w (t)}_{\L1 (\reali^n)}
      +\norma{\hat w (t) - \tilde w (t)}_{\L1 (\reali^n)}.
  \end{align}
  Compute each term in~\eqref{eq:25} separately. The first one is the
  $\L1$--distance between solutions to balance laws of the
  type~\eqref{eq:hyp} with different initial
  data. Exploiting~\eqref{eq:solCara} for the solution to these
  balance laws and the bounds obtained in the proof
  of~\textbf{Step~1}, we get
  \begin{equation}
    \label{eq:25a}
    \!\!\!\!
    \norma{u (t) - \hat u (t)}_{\L1 (\reali^n)}
    {\leq}  \norma{u_o - \tilde u_o}_{\L1 (\reali^n)}
    \exp\left[\!
      K_f (t) \, (t-t_o)
      \left(1 + \norma{w_o}_{\L\infty (\reali^n)} \, e^{K_g (t) (t-t_o)}\right)
      \!\right]\!.
  \end{equation}
  The second term in~\eqref{eq:25} is the $\L1$--distance between
  solutions to balance laws of the type~\eqref{eq:hyp} with different
  coefficients $b$, $c$ and same initial datum.  Exploiting the
  computations in the proof of \textbf{Step~4}, as well
  as~\ref{item:ustab} in Proposition~\ref{prop:hyper}, we get
  \begin{align}
    \nonumber
    & \norma{\hat u (t) - \tilde u (t)}_{\L1 (\reali^n)}
    \\
    \nonumber
    \leq \
    & \left\{ \mathcal{\hat{O}}_1 (t,r) \left(
      \norma{\tilde u_o}_{\L\infty (\reali^n)}
      + \tv (\tilde u_o)
      + \int_{t_o}^t
      \left(
      \norma{q (\tau)}_{\L\infty (\reali^n)}
      +
      \tv\left(q (\tau) \right)
      \right) \d\tau
      \right)  K_v (t) \right.
    \\
    \nonumber
    & {+} \mathcal{\hat{O}}_2 (t,r) \!
      \left( \!
      \norma{\tilde u_o}_{\L\infty (\reali^n)}
      {+} \int_{t_o}^t  \norma{q (\tau)}_{\L\infty (\reali^n)} \d\tau
      \right)
          \left(\!
          K_f (t)
          {+} C_v \left( t, \norma{\tilde w_o}_{\L\infty (\reali^n)}
          e^{K_g (t) (t-t_o)}\right)\right)\Biggr\}
    \\
    \label{eq:25b}
    &
      \times
      \int_{t_o}^t \norma{w (\tau) - \tilde w (\tau)}_{\L1 (\reali^n)}\d\tau,
  \end{align}
  with
  \begin{align}
    \nonumber
    \mathcal{\hat{O}}_1 (t,r) = \
    & \exp\left( K_f (t) \, (t-t_o)
      +
      r\, (t-t_o) \,  e^{K_g (t) (t-t_o)}
      \left(K_f (t) + K_v (t)\right)
      \right)
    \\
    \label{eq:01r}
    & \times \left[ 1
      +
      (t-t_o) \, C_v (t, r \,  e^{K_g (t) (t-t_o)})\,
      r \, e^{K_g (t) (t-t_o)}\right.
    \\
    \nonumber
    & \qquad \left.
      +
      K_f (t) \, (t-t_o)
      \left(
      1
      +
      r
      +
      \frac{4\,J_n}{3\, \sqrt{\mu}} \,
      r\, K_g (t) \, \sqrt{t-t_o} \, e^{K_g (t) (t-t_o)}
      \right)
      \right],
    \\
    \label{eq:O2r}
    \mathcal{\hat{O}}_2 (t,r) = \
    & \exp
      \left(
      K_f (t) \, (t-t_o)
      \left( 1 + r \,  e^{K_g (t) (t-t_o)}\right)
      \right).
  \end{align}

  The first term in~\eqref{eq:26} is the $\L1$--distance between
  solutions to equations of the type~\eqref{eq:par} with different
  initial data. Since $\mathcal{P}$ as defined in
  Proposition~\ref{prop:para} is linear, by \textbf{Step~1} we obtain
  \begin{equation}
    \label{eq:26a}
    \norma{w (t) - \hat w (t)}_{\L1 (\reali^n)}
    \leq
    \norma{w_o - \tilde w_o}_{\L1 (\reali^n)} \,
    \exp\left(K_g (t) (t-t_o)\right).
  \end{equation}
  The second term in~\eqref{eq:26} is the $\L1$--distance between
  solutions to the parabolic equation~\eqref{eq:par} with different
  coefficients in the source term and the same initial
  datum. Exploiting the computations in the proof of \textbf{Step~4},
  as well~\ref{it:P:stab} in Corollary~\ref{cor:para}, we get
  \begin{equation}
    \label{eq:26b}
    \norma{\hat w (t) - \tilde w (t)}_{\L1 (\reali^n)}
    \leq
    \norma{\tilde w_o}_{\L\infty (\reali^n)} K_g (t) \, e^{2 \,K_g (t) (t-t_o)}
    \int_{t_o}^t
    \norma{
      \left(
        u (\tau) - \tilde u (\tau), \, w (\tau) - \tilde w (\tau)
      \right)}_{\mathcal{X}}
    \d\tau.
  \end{equation}
  Hence, \eqref{eq:25a}, \eqref{eq:25b}, \eqref{eq:26a}
  and~\eqref{eq:26b} yield
  \begin{align*}
    \norma{(u (t), w(t))  - (\tilde u (t), \tilde w (t))}_{\mathcal{X}}
    \leq \
    & \mathcal{K}_1 (t, r) \,
      \norma{(u_o, w_o)  - (\tilde u_o, \tilde w_o)}_{\mathcal{X}}
    \\
    & \!\! + \mathcal{K}_2 (t, r)
      \int_{t_o}^t \left(
      \norma{\left(
      u (\tau) - \tilde u (\tau), \, w (\tau) - \tilde w (\tau)
      \right)}_{\mathcal{X}}
      \right) \d\tau,
  \end{align*}
  where we set
  \begin{align*}
    \mathcal{K}_1 (t,r) = \
    & \exp
      \left( \max
      \left\{
      K_f (t) \, (t-t_o)
      \left(1 + r \, e^{K_g (t) (t-t_o)}\right), \,
      e^{K_g (t) (t-t_o)}
      \right\}
      \right),
    \\
    \mathcal{K}_ 2(t, r) = \
    &
      \mathcal{\hat{O}}_1 (t,r) \left(
      2\, r
      +
      \int_{t_o}^t
      \left(
      \norma{q (\tau)}_{\L\infty (\reali^n)}
      +
      \tv\left(q (\tau) \right)
      \right) \d\tau
      \right)  K_v (t)
    \\
    & +
      \mathcal{\hat{O}}_2 (t,r) \left(
      r
      +
      \int_{t_o}^t  \norma{q (\tau)}_{\L\infty (\reali^n)} \d\tau
      \right)
      \left(
      K_f (t)
      + C_v \left( t, r \, e^{K_g (t) (t-t_o)}\right)
      \right)
    \\
    &  +
      r \, K_g (t) \, e^{2 \, K_g (t) (t-t_o)}.
  \end{align*}
  An application of Gronwall Lemma yields:
  \begin{displaymath}
    \norma{(u (t), w(t)) - (\tilde u (t), \tilde w
      (t))}_{\mathcal{X}}
    \leq \ \norma{(u_o, w_o) - (\tilde u_o, \tilde
      w_o)}_{\mathcal{X}} \int_{t_o}^t \mathcal{K}_1 (s, r) \,
    \exp\left( \int_s^t \mathcal{K}_2 (\tau, r ) \d\tau \right)
    \d{s} ,
  \end{displaymath}
  proving \textbf{Step~7} with
  \begin{equation}
    \label{eq:36}
    \mathcal{C}_o (t, r) = \int_{t_o}^t \mathcal{K}_1 (s, r) \,
    \exp\left( \int_s^t \mathcal{K}_2 (\tau, r ) \d\tau \right) \d{s} .
  \end{equation}

  \medskip

  \paragraph{Step 8:} Given $q, \, \tilde q$ satisfying~\ref{q*}, call
  $(u,w)$ and $(\tilde u, \tilde w)$ the solutions to~\eqref{eq:1}
  with the same initial datum $(u_o, w_o) \in \mathcal{X}_r^+$. Then,
  for all $t\in I$, \eqref{eq:27} holds with $\mathcal{C}_q$ defined
  in~\eqref{eq:33}.

  \paragraph{Proof of Step 8:}
  Define for $(t,x) \in I \times \reali^n$ the functions
  $a, \tilde a, \, b, \, \tilde b, \, c, \, \tilde c$ as
  in~\eqref{eq:utile}.

  The $\L1$ distance between $w(t)$ and $\tilde w (t)$ can be computed
  as in~\eqref{eq:26b}, leading to
  \begin{equation}
    \label{eq:28}
    \norma{w (t)- \tilde w (t)}_{\L1 (\reali^n)}
    \leq
    \norma{w_o}_{\L\infty (\reali^n)} K_g (t) \, e^{2 \, K_g (t) (t-t_o)}
    \int_{t_o}^t
    \norma{
      (u - \tilde u, \, w - \tilde w)(\tau)}_{\mathcal{X}}
    \d\tau.
  \end{equation}
  To compute the $\L1$ distance between $u (t)$ and $\tilde u (t)$, we
  exploit~\ref{item:ustab} in Proposition~\ref{prop:hyper} and the
  computations in the proofs of \textbf{Step~4} and \textbf{Step~7},
  to get
  \begin{align}
    \nonumber
    &  \norma{u (t)- \tilde u (t)}_{\L1 (\reali^n)}
    \\ \nonumber
    \leq \
    & \mathcal{\hat{O}}_1 (t,r)
      \biggl[ \norma{u_o}_{\L\infty (\reali^n)}
      +
      \tv (u_o)
    \\ \nonumber
    & \qquad \left.+ \int_{t_o}^t \!\!
      \left(
      \max
      \left\{
      \norma{q (\tau)}_{\L\infty (\reali^n)}, \,
      \norma{\tilde q (\tau)}_{\L\infty (\reali^n)}
      \right\}
      + \max\left\{\tv\left( q(\tau)\right), \,
      \tv\left(\tilde q (\tau)\right)\right\}\right) \d\tau
      \right]
    \\ \nonumber
    & \times
      K_v (t)
      \int_{t_o}^t \norma{w (\tau) - \tilde w (\tau)}_{\L1 (\reali^n)} \d\tau
    \\ \nonumber
    & + \mathcal{\hat {O}}_2 (t,r)
      \left(
      \norma{u_o}_{\L\infty (\reali^n)}
      +
      \int_{t_o}^t
      \max
      \left\{
      \norma{q (\tau)}_{\L\infty (\reali^n)}, \,
      \norma{\tilde q (\tau)}_{\L\infty (\reali^n)}
      \right\}
      \d\tau \right)
    \\
    \nonumber
    & \times
      \left(
      K_f (t)
      +
      C_v\left(t, \norma{w_o}_{\L\infty} e^{K_g (t) (t-t_o)}\right)
      \right)
    \\ \label{eq:29}
    & \times
      \int_{t_o}^t \norma{w_i (\tau) - w_{i-1} (\tau)}_{\L1 (\reali^n)} \d\tau
      +
      \mathcal{\hat{O}}_2 (t,r) \,
      \norma{q - \tilde q}_{\L1 ([t_o,t] \times \reali^n)},
  \end{align}
  where $\mathcal{\hat{O}}_1 (t,r)$ and $\mathcal{\hat{O}}_2 (t,r)$
  are as in~\eqref{eq:01r}--\eqref{eq:O2r}. Collecting
  together~\eqref{eq:28} and~\eqref{eq:29} and an application of
  Gronwall Lemma completes the proof of \textbf{Step~8} with
  \begin{align}
    \label{eq:33}
    \mathcal{C}_q (t,r) = \
    &
      \mathcal{\hat{O}}_2 (t,r)
      \int_{t_o}^t
      \exp
      \int_s^t
      \left\{
      r \, K_g (\tau) \, e^{2 \, K_g (\tau) (\tau-t_o)}
      +  K_v (\tau) \,
      \mathcal{\hat{O}}_1 (\tau,r)
      \biggl[
      2\,r
      \right.
    \\
    \nonumber
    & \qquad \left.
      +\! \int_{t_o}^\tau \!\!
      \left[
      \max \left\{
      \norma{q (\sigma)}_{\L\infty (\reali^n)},
      \norma{\tilde q (\sigma)}_{\L\infty (\reali^n)}
      \!
      \right\}
      \!+\!
      \max \left\{\!
      \tv \left( q(\sigma)\right),
      \tv \left(\tilde q (\sigma)
      \right)\!
      \right\}\!
      \right]\!
      \d\sigma\!
      \right]
    \\
    \nonumber
    & \qquad
      + \mathcal{\hat{O}}_2 (\tau,r)
      \left[
      r
      + \int_{t_o}^\tau
      \max
      \left\{
      \norma{q (\sigma)}_{\L\infty (\reali^n)}, \,
      \norma{\tilde q (\sigma)}_{\L\infty (\reali^n)}
      \right\}
      \d\sigma
      \right]
    \\
    \nonumber
    & \qquad\quad  \times
      \left[ K_f (\tau)
      +
      C_v\left(\tau, r e^{K_g (\tau) (\tau-t_o) }\right)\right]
      \Biggr\}
      \d\tau \d{s}.
  \end{align}
\end{proofof}

\paragraph{Acknowledgement:}
The authors were partly supported by the PRIN~2015 project
\emph{Hyperbolic Systems of Conservation Laws and Fluid Dynamics:
  Analysis and Applications}. The GNAMPA~2018 project
\emph{Conservation Laws: Hyperbolic Games, Vehicular Traffic and Fluid
  dynamics} is also acknowledged. The second author acknowledges the
support of the Lorentz Center. The \emph{IBM Power Systems Academic
  Initiative} substantially contributed to the numerical integrations.

{ \small

  \bibliography{ColomboRossi}

  \bibliographystyle{abbrv}

}

\end{document}